\documentclass[a4paper, 10pt]{article}

\usepackage[utf8]{inputenc}    
\usepackage[T1]{fontenc}
\usepackage{float}
\usepackage{epsf}
\usepackage{mathrsfs}
\usepackage{lmodern}
\usepackage{graphicx}
\usepackage{subcaption}
\usepackage{caption}
\captionsetup{width=\textwidth}
\usepackage{wrapfig}	
\usepackage[english]{babel}
\usepackage{amsmath, amsfonts, amssymb} 
\usepackage[citecolor=blue]{hyperref} 
\usepackage{amsthm}
\hypersetup{colorlinks = true, urlcolor = blue,linkcolor=red, bookmarksopen = true}
\usepackage[nomarginpar]{geometry}
\usepackage{young}
\usepackage[usenames,dvipsnames]{xcolor}
\usepackage[normalem]{ulem}
\usepackage{titlesec}
\numberwithin{equation}{section}
\usepackage{amsrefs}
\geometry{verbose, lmargin=1in, rmargin=1in, bmargin=1in, tmargin=1in}

\newcommand{\norme}[1]{\left\Vert #1\right\Vert}

\newcommand{\norm}[1]{\left\Vert #1\right\Vert}
\newcommand{\D}{\mathrm{d}}
\newcommand{\I}{\mathrm{i}}

\theoremstyle{plain}
\newtheorem{theorem}{Theorem}[section]
\newtheorem{lemma}[theorem]{Lemma}
\newtheorem{corollary}[theorem]{Corollary}
\newtheorem{proposition}[theorem]{Proposition}
\theoremstyle{definition}
\newtheorem{remark}[theorem]{Remark}

\newtheorem{definition}[theorem]{Definition}
\usepackage{dsfont}
\usepackage{bm}

\titleformat{\paragraph}[runin]{\itshape\normalsize}{\theparagraph}{}{}
\titleformat{\subparagraph}[runin]{\itshape\normalsize}{\theparagraph}{}{}
\titleformat{\section}[block]{\normalfont\filcenter}{\Large\bf\thesection .}{.5em}{\Large\bf}
\titleformat{\subsection}[block]{\normalfont\bfseries}{\large\bf\thesubsection  .}{.5em}{\large \bf}

\captionsetup{singlelinecheck=yes}
\def \equi#1{\mathrel{\mathop{\kern 0pt\sim}\limits_{#1}}}

\begin{document}
\addtocontents{toc}{\protect\setcounter{tocdepth}{1}}
\setcounter{secnumdepth}{3}

\title{\textbf{Eigenvectors distribution and quantum unique ergodicity for deformed Wigner matrices}}
\author{L. Benigni\\\vspace{-0.15cm}\footnotesize{\it{LPSM, Université Paris Diderot}}\\\footnotesize{\it{lbenigni@lpsm.paris}}}
\date{}
\maketitle
\abstract 
\small{We analyze the distribution of eigenvectors for mesoscopic, mean-field perturbations of diagonal matrices, in the bulk of the spectrum. Our results apply to a generalized $N\times N$ Rosenzweig--Porter model. We prove that the eigenvector entries are asymptotically Gaussian with a specific variance. 
 For a well spread initial spectrum, this variance profile universally follows a heavy-tailed Cauchy distribution. In the case of smooth entries, we also obtain a strong form of quantum unique ergodicity in the form of a strong concentration inequality for the mass of eigenvectors on a given set of coordinates. The proof relies on a priori local laws for this model as given in \cites{lee2016bulk,landon2017convergence,bourgade2017eigenvector}, and the eigenvector moment flow from \cites{bourgade2017eigenvector, bourgade2018random}.} 
 
 \vspace{1ex}
 \renewcommand{\abstract}{\begin{center} \textbf{R\'esum\'e}\end{center}}
 \begin{abstract}
 Nous analysons la distribution des vecteurs propres de perturbations m\'esoscopiques de matrices diagonales \`a l'int\'erieur du spectre. Nos r\'esultats s'appliquent a un mod\`ele g\'en\'eralis\'e de Rosenzweig--Porter. Nous prouvons que les entr\'ees des vecteurs propres sont asymptotiquement gaussiennes avec une variance explicite. Lorsque le spectre initial est bien \'etal\'e, ce profil de variance suit de mani\`ere universelle une distribution de Cauchy \`a queue lourde. Lorsque les entr\'ees sont lisses, nous obtenons aussi une forme forte d'unique ergodicit\'e quantique sous la forme d'une in\'egalit\'e de concentration sur la masse des vecteurs propres sur un domaine fix\'e de coordonn\'ees. La preuve se base sur des lois locales a priori donn\'ees dans \cites{lee2016bulk,landon2017convergence,bourgade2017eigenvector} et le flot des moments des vecteurs propres de \cites{bourgade2017eigenvector, bourgade2018random}
 \end{abstract}
 
 \noindent\textbf{Keywords}: {Deformed random matrices; Eigenvector distribution; Quantum unique ergodicity; Dyson Brownian motion}
 
 \vspace{1ex}
 \noindent \textbf{MSC}: Primary 60B20; secondary 58J51
{\hypersetup{linkcolor=black}\tableofcontents}
\section{Introduction}
\indent{}\normalsize In the study of large interacting quantum systems, Wigner conjectured that empirical results on energy levels are well approximated by statistics of eigenvalues of large random matrices. We refer to \cite{mehta2004random} for an overview and the mathematical formalization of the conjecture. This vision has not been shown for realistic correlated quantum systems but is regarded to hold for numerous models. For instance,  the  Bohigas$-$Giannoni$-$Schmit conjecture in quantum chaos \cite{bohigas1984characterization} connects 
eigenvalues distributions in the semiclassical limit to the Gaudin distribution for GOE statistics. These statistics also conjecturally appear for random Schr\"{o}dinger operators \cite{Anderson} in the delocalized phase.
Most of these hypotheses are unfortunately far from being proved with mathematical rigor. It is, however, possible to study systems given by large random matrices. One of the most important models of this type is the Wigner ensemble, random Hermitian or symmetric matrices whose elements are, up to the symmetry, independent and identically distributed zero-mean unit variance random variables. For this ensemble, local statistics of the spectrum only depend on the symmetry class and not on the laws of the elements (see \cites{erdos2011universality,Tao2011random,erdos2012bulk,erdos2015gap,bourgade2016fixed}). The Wigner$-$Dyson$-$Mehta conjecture was solved for numerous, more general mean-field models such as the generalized Wigner matrices, random matrices for which the laws of the matrix elements can have distinct variances (see \cite{erdos2017dynamical} and references therein).\\
\indent The statistics of eigenvectors were not present in Wigner's original study but localization, or delocalization, has been broadly studied in random matrix theory. For Wigner matrices, it has been shown in \cite{erdos2009local} that eigenvectors are completely delocalized in the following sense: denoting $u_1,\dots,u_N$ the $L^2$-normalized eigenvectors of an $N\times N$ Wigner matrix, we have with very high probability,
$$\sup_{\alpha}\vert u_i(\alpha)\vert\leqslant \frac{C(\log N)^{C}}{\sqrt{N}}.$$
Thus, eigenvectors cannot concentrate onto a set of size smaller than $N(\log N)^{-C}.$ See also \cite{vu2015random} for optimal bounds in some cases of Wigner matrices or \cite{erdos2012rigidity} for similar estimates for generalized Wigner matrices and  \cite{rudelson2015delocalization} for an improved bound which also holds for non-Hermitian matrices.  In the GOE and GUE cases, the distribution of the matrix is orthogonally invariant and eigenvectors are distributed according to the Haar measure on the orthogonal group. In particular, the entries of bulk eigenvectors are asymptotically normal:
\[
\sqrt{N}u_i(\alpha)\xrightarrow[N\rightarrow\infty]{}\mathcal{N},
\]
where $\mathcal{N}$ is a standard Gaussian random variable. Asymptotic normality was first proved for Wigner matrices in \cites{knowles2013eigenvector, Tao2012random} under a matching condition on the first four moments of the entries using comparison theorems introduced in \cites{Tao2011random, erdos2012bulk}. These conditions were later removed in \cite{bourgade2017eigenvector} where asymptotic normality holds for generalized Wigner matrices. Beyond mean-field models, conjectures of interest, for example for band matrices, are still yet to be proved. A sharp transition is conjectured to occur when the band width $W$ cross the critical value $\sqrt{N}$. For $W\ll\sqrt{N}$, eigenvectors are expected to be exponentially localized on $\mathcal{O}(W^2)$ sites and eigenvalue statistics are Poisson, while for $W\gg\sqrt{N}$ eigenvectors would be completely delocalized and one would get Wigner-Dyson-Mehta statistics for the eigenvalues. For the most recent works on this subject see \cites{schenker2009eigenvector, peled2019wegner} for localization results, \cite{bourgade2018random} for delocalization results, \cite{sodin2010spectral} for another transition occurring at the edge of the spectrum and \cite{bourgade2018band} for a recent review on the subject.  

\paragraph{}In this paper, we consider a generalized Rosensweig-Porter model, of  mean-field type, which also interpolates between delocalized and localized (or partially delocalized) phases, but always with GOE/GUE statistics. It is defined as a perturbation of a potential, consisting of a deterministic diagonal matrix, by a mean field noise, given by a Wigner random matrix, scaled by a parameter $t$. This model follows two distinct phase transitions. When $t\ll 1/N$, eigenvalue statistics coincide with $t=0$ and eigenvectors are localized on $\mathcal{O}(1)$ sites \cite{von2017local}, while when $t\gg 1$, local statistics fall in the Wigner-Dyson-Mehta universality class \cite{lee2016bulk} with fully delocalized eigenvectors \cite{lee2013local}. For $1/N\ll t\ll 1$, it has been shown in \cite{landon2017convergence} that eigenvalue statistics are in the Wigner-Dyson-Mehta universality class and in \cite{von2017non} that eigenvectors are not completely delocalized when the noise is Gaussian. In this intermediate phase, also called the \emph{bad metal regime} (see \cite{facoetti2016non} or \cite{ossipov2015eig} for instance), eigenstates are partially delocalized over $Nt$ sites, a diverging number as $N$ grows but a vanishing fraction of the eigenvector coordinates. The existence of this regime for more intricate models is only conjectured or even debated in the physics literature though progress has been made recently, for instance for the Anderson model on the Bethe lattice and regular graph in \cite{kravtsov2017non}. \\
\indent Our results give the asymptotic distribution of the eigenvectors for the Rosensweig-Porter model with a complete understanding of the bad metal regime. We show that bulk eigenvectors are asymptotically Gaussian with a specific, explicit variance depending on the initial potential, the parameter $t$ and  the position in the spectrum. For a well-spread initial condition, this variance is heavy-tailed and follows a Cauchy distribution. This shape was first unearthed in a non-rigorous way  in \cites{allez2014eigenvectors, allez2014eigenvector} for $W$ a matrix from the Gaussian ensembles, where the  
Gaussian distribution of eigenvectors (Corollary \ref{cor:Gaussian}) was conjectured. Note that eigenvector dynamics was also considered in \cite{allez2012eigenvector} and used for denoising matrices in \cites{bun2016rotational, bun2018overlaps}. In the case of Gaussian entries, the eigenvector distribution has been exhibited in the physics literature in \cite{facoetti2016non} using the resolvent flow and in \cite{ossipov2015eig} using supersymmetry techniques.\\
\indent Another strong form of delocalization of eigenfunctions is quantum ergodicity. It has been proved for the Laplace-Beltrami operator for a wide class of manifolds by Shnirel'man \cite{shnirel1974ergodic}, Colin de Verdière \cite{colin1985ergodicite} and Zelditch \cite{zelditch1987uniform} but also for regular graphs by Ananthamaran-Le Masson \cite{anantharaman2015quantum}. In \cite{rudnick1994behaviour}, Rudnick-Sarnack conjectured a stronger form of delocalization for eigenfunctions of the Laplacian called the quantum unique ergodicity. More precisely, denote $(\phi_k)_{k\geqslant 1}$ the eigenfunctions of the Laplace operator on any negatively curved compact Riemannian manifold $\mathcal{M}$, they then supposedly become equidistributed with respect to the volume measure $\mu$ in the following sense: for any open set $A\subset\mathcal{M}$
$$\int_A\vert\phi_k\vert^2\mathrm{d}\mu\xrightarrow[k\rightarrow\infty]{}\int_A \mathrm{d}\mu.$$
This convergence has been established for arithmetic surfaces (see \cites{holowinsky2012sieving, holowinsky2010mass, lindenstrauss2006invariant}).\\
\indent A probabilistic form of quantum unique ergodicity exists for eigenvectors of large random matrices. It first appeared in \cite{bourgade2017eigenvector} for generalized Wigner matrices. It is stated as a high-probability bound showing that eigenvectors are asymptotically flat in the following way: let $(u_k)_{1\leqslant k \leqslant N}$ be the eigenvectors of a $N\times N$ generalized Wigner matrix, then for any $k\in[\![1,N]\!]$, for any deterministic $N$-dependent set $I\in[\![1,N]\!]$ such that $\vert I\vert\rightarrow +\infty$ and any $\delta>0$,
$$\mathds{P}\left(
	\frac{N}{\vert I\vert}
	\left\vert
		\sum_{\alpha \in I}			
		\left(
			u_k(\alpha)^2-\frac{1}{N}
		\right)
	\right\vert
	>\delta
\right)
\leqslant \frac{N^{-\varepsilon}}{\delta^2}$$
for some $\varepsilon>0$ using the Bienaymé–Chebyshev inequality. 
Similar high-probability bounds were proved for different models of random matrices such as $d$-regular random graphs in \cite{bauerschmidt2016local}, or band matrices in \cites{bourgade2017universality, bourgade2018random}. In these last papers on band matrices, it was seen that quantum unique ergodicity is a useful property to study non mean-field models. In \cite{bourgade2018random}, a stronger form of probabilistic quantum unique ergodicity has been actually proved, showing that the eigenvectors mass is asymptotically flat with overwhelming probability (the probability decreases faster than any polynomial). Our result adapts the method introduced in \cite{bourgade2018random} to show a strong deformed quantum unique ergodicity for eigenvectors of a class of deformed Wigner matrices. Indeed, the probability mass is not flat but concentrates onto an explicit and deterministic profile with a quantitative error.

\paragraph{}The key ingredient for this analysis is the Bourgade-Yau eigenvector moment flow  \cite{bourgade2017eigenvector}, a multi-particle random walk in a random environment given by the trajectories of the eigenvalues.
This method was used for generalized Wigner matrices  \cite{bourgade2017eigenvector} and sparse random graphs \cite{bourgade2017huang}, and both settings correspond to equilibrium or close to equilibrium situations. 
Our main contribution consists in treating the non-equilibrium case, which implies additional difficulties made explicit in the next section.

\subsection{Main Results}
Consider a deterministic diagonal matrix $D=\mathrm{diag}(D_1,\dots,D_N)$. The eigenvalues (or diagonal entries) need to be regular enough on a window of size $r$ in the following way first defined in \cite{landon2017convergence}.

\begin{definition}\label{def:init}
Let $\eta_\star$ and $r$ be two $N$-dependent parameters satisfying
$$N^{-1}\leqslant \eta_\star \leqslant N^{-\varepsilon'},\quad N^{\varepsilon'}\eta_\star\leqslant r\leqslant N^{-\varepsilon'}$$
for some $\varepsilon'>0.$ A deterministic diagonal matrix $D$ is said to be $(\eta_\star,r)$-regular at $E_0$ if there exists $c_D>0$ and $C_D>0$ independent of $N$ such that for any $E\in[E_0-r,E_0+r]$ and $\eta_\star\leqslant\eta\leqslant 10$, we have
$$c_D\leqslant \mathrm{Im}\,m_D(E+\mathrm{i}\eta)\leqslant C_D,$$
where $m_D$ is the Stieltjes transform of $D$:
$$m_D(z)=\frac{1}{N}\sum_{k=1}^N\frac{1}{D_k-z}.$$
\end{definition}
We want to study the perturbation of such a diagonal matrix, notably the eigenvectors, by a mesoscopic Wigner random matrix. We will now suppose that $D$ is $(\eta_\star,\,r)$-regular at $E_0$, a fixed energy point. Letting $0<\kappa<1$, we will denote in the rest of the paper the spectral window as 
\begin{equation*}
\mathcal{I}^\kappa_{r}=[E_0-(1-\kappa)r,E_0+(1-\kappa) r].
\end{equation*} 
We remove a certain window of energy to avoid any possible complications at the edge.
We will also use the following domains: the first domain is used for the size of our deformation while the other is the spectral domain in which we perform our analysis. We need the perturbation to be mesoscopic but smaller than the energy window size $r$, define then for any small positive $\omega,$
$$\mathcal{T}_\omega=\left[N^{\omega}\eta_\star,N^{-\omega}r\right].$$
For the spectral domain, take first $t\in\mathcal{T}_\omega$ and note that we will consider only $\mathrm{Im}(z):=\eta$ smaller than $t$ but most results such as local laws holds up to macroscopic $\eta$. Let $\vartheta>0$ be an arbitrarily small constant and
$$\mathcal{D}_{r}^{\vartheta,\kappa}=\left\{z=E+\mathrm{i}\eta:\, E\in\mathcal{I}_{r}^\kappa,\, N^{\vartheta}/N\leqslant \eta\leqslant N^{-\vartheta}t\right\}.$$

Hereafter is our assumptions on our Wigner matrix.
\begin{definition}\label{def:wignernonsmooth}
A Wigner matrix $W$ is a $N\times N$ Hermitian/symmetric matrix satisfying the following conditions
\begin{enumerate}
\item[(i)]The entries $(W_{i,j})_{1\leqslant i\leqslant j\leqslant N}$ are independent.
\item[(ii)]For all $i,j$, $\mathds{E}[W_{i,j}]=0$ and $\mathds{E}[\vert W_{ij}\vert^2]=N^{-1}.$
\item[(iii)]For every $p\in\mathbb{N}$, there exists a constant $C_p$ such that $\norm{\sqrt{N}W_{ij}}_p\leqslant C_p$.
\end{enumerate}
\end{definition}
Let $W$ be a Wigner matrix and define the following $t$-dependent matrix for $t\in\mathcal{T}_\omega$
\begin{equation}\label{def:wigner}
W_t=D+\sqrt{t}W.
\end{equation}
The eigenvectors of $D$ are exactly the vectors of the canonical basis since the matrix is diagonal. However, if $t$ were of order one instead of being in $\mathcal{T}_\omega$, the local statistics of $W_t$ would become universal and would be given by local statistics from the Gaussian ensemble. In particular, the eigenvectors would be completely delocalized \cite{lee2016bulk}. Our model consists in looking at the diffusion of the eigenvectors on the canonical basis after a mesoscopic perturbation. Our main result is that the coordinates of bulk eigenvectors are time and position dependent Gaussian random variables.  Before stating our result, we first define the asymptotic distribution of the eigenvalues of the matrix $W_t$ which is the free convolution of the semicircle law (coming from $W$) and the empirical distribution of $D$. We will define this distribution through its Stieltjes transform $m_t(z)$ as the solution to the following self-consistent equation

\begin{equation}
m_t(z)=\frac{1}{N}\sum_{\alpha=1}^N\frac{1}{D_\alpha-z-tm_t(z)}.
\end{equation} 
It is known that this equation has a unique solution with positive imaginary part and is the Stieltjes transform of a measure with density denoted by $\rho_t$ (see \cite{biane1997free} for more details).  Define the quantiles $\left(\gamma_{i,t}\right)_{0\leqslant i\leqslant N}$ of this measure by
$$\int_{-\infty}^{\gamma_{i,t}}\rho_t(x)\mathrm{d}x=\frac{i}{N}.$$
We can also now define the set of indices in the spectral window corresponding to the indices such that the corresponding classical locations lies in the energy window $\mathcal{I}_r^{\kappa},$
\[
\mathcal{A}^\kappa_{r}=\left\{i\in[\![1,N]\!],\gamma_{i,t}\in \mathcal{I}^\kappa_{r}\right\}.
\]
We can now state our main results, denoting  $u_1(t),\dots,u_N(t)$ the $L^2$-normalized eigenvectors of $W_t$ (we will often omit the $t$-dependence for $\bm{u}$). We will define the following quantity, for $N^{-1}\ll \eta\ll t$ and $\eta_\star\ll t\ll r$:
\begin{equation}\label{defsigma}
\sigma_t^2(\mathbf{q},k,\eta)
=
\sum_{\alpha=1}^N\frac{q_\alpha^2t}{(D_\alpha-\gamma_{k_i,t}-t\mathrm{Re}\,m_t(\gamma_{k_i,t}+\I\eta))^2+\left(t\mathrm{Im}\,m_t(\gamma_{k_i,t}+\I\eta)\right)^2}.
\end{equation}
It is the asymptotic deterministic variance of our eigenvector projections. By taking $\mathbf{q}$ to be a canonical basis vector $\mathbf{e}_\alpha$, we see that $u_k(\alpha)$ has a variance of the form
$$\frac{1}{N}\frac{t}{(D_\alpha-\gamma_{k_i,t}-t\mathrm{Re}\,m_t(\gamma_{k_i,t}))^2+(t\mathrm{Im}\,m_t(\gamma_{k_i,t}))^2}.$$
For regularly spaced $D_\alpha$'s, this is heavy-tailed with Cauchy shape $\frac{t}{N(x^2+t^2)}$. It localizes the entries onto a subset of indices of size $Nt\ll N$: a fraction of the eigenvector coordinates vanishing as $N$ grows. Such a partial localization appears in \cite{von2017non} for $W$ GOE-distributed. 
\begin{theorem}\label{theo:resultGaus}(Gaussianity of bulk eigenvectors)
Fix $\kappa\in(0,1),$ $\omega\in (0,\varepsilon^\prime/10)$ where $\varepsilon^\prime$ is as in Definition \ref{def:init} and $m\in\mathbb{N}$. Let $t\in\mathcal{T}_\omega$ and  $I\subset \mathcal{A}^\kappa_{r}$ be a deterministic ($N$-dependent) set of $m$ elements. Let $W$ as in Definition \ref{def:wignernonsmooth} and $W_t$ as in \eqref{def:wigner}.  Write $I=\{k_1,\dots,k_m\}$, take a deterministic $\mathbf{q}\in\mathbb{R}^N$ such that $\norme{\mathbf{q}}_2=1$, and define for $i\in[\![1,m]\!],$ 
\begin{equation}
\sigma_t^2(\mathbf{q},k_i):=\lim_{\eta\downarrow 0} \sigma_t^2(\mathbf{q},k_i,\eta)
\end{equation}
Then we have
\begin{align}
\label{results}\left(\sqrt{\frac{N}{\sigma_t^2(\mathbf{q},k_i)}}\vert \langle \mathbf{q},u_{k_i}\rangle\vert\right)_{i=1}^m&\xrightarrow[N\rightarrow\infty]{}(\vert\mathcal{N}_i\vert)_{i=1}^m&\text{in the symmetric case,}\\
\label{resulth}\left(\sqrt{\frac{2N}{\sigma_t^2(\mathbf{q},k_i)}}\vert \langle \mathbf{q},u_{k_i}\rangle\vert\right)_{i=1}^m&\xrightarrow[N\rightarrow\infty]{}(\vert\mathcal{N}_i^{(1)}+i\mathcal{N}_i^{(2)}\vert)_{i=1}^m&\text{in the Hermitian case}
\end{align}
in the sense of convergence of moments, where all $\mathcal{N}_{i},$ $\mathcal{N}_{i}^{(1)}$ and $\mathcal{N}_{i}^{(2)}$ are independent Gaussian random variables with variance 1. The convergence is uniform in over the choice of sets $I\subset[\![1,N]\!]$ of size $m$.
\end{theorem}
One can deduce joint weak convergence of eigenvector entries from the previous convergence of moments because  $\mathbf{q}$ is arbitrary in $\mathbb{S}^{N-1}$ (see \cite{bourgade2017eigenvector}*{Section 5.3}). However, since the eigenvectors are defined up to a phase, we first need to define the following  equivalence relation: $u\sim v$ if and only if $u=\pm v$ in the symmetric case and $u=e^{\mathrm{i}\omega}v$ for some $\omega\in\mathbb{R}$ in the Hermitian case.

\begin{corollary}\label{cor:Gaussian}
Let $\kappa\in(0,1)$ and $m\in\mathbb{N}$, let $W$ as in Definition \ref{def:wignernonsmooth} and $W_t$ as in Definition \ref{def:wigner}.  Then for any deterministic  $k\in\mathcal{A}^\kappa_{r}$ and $J\subset [\![1,N]\!]$ such that $\vert J\vert =m$ we have
\begin{align}
\left(\sqrt{\frac{N}{\sigma_t^2(\mathbf{e_\alpha},k)}}u_k(\alpha)\right)_{\alpha\in J}&\xrightarrow[N\rightarrow \infty]{}\left(\mathcal{N}_i\right)_{i=1}^m&\text{in the symmetric case},\\
\left(\sqrt{\frac{2N}{\sigma_t^2(\mathbf{e_\alpha},k)^2}}u_k(\alpha)\right)_{\alpha\in J}&\xrightarrow[N\rightarrow \infty]{}\left(\mathcal{N}^{(1)}_i+\mathrm{i}\mathcal{N}^{(2)}_i\right)_{i=1}^m&\text{in the Hermitian case}
\end{align}
in the sense of convergence of moments modulo $\sim$, where all $\mathcal{N}_{i},$ $\mathcal{N}_{i}^{(1)}$ and $\mathcal{N}_{i}^{(2)}$ are independent Gaussian random variables with variance 1. In more precise terms, for any polynomial $P$ in $m$ variables there exists $\delta$ depending on $P$ such that, for $N$ large enough,
\[
\sup_{\substack{J\subset [\![1,N]\!],\,\vert J\vert=m\\ k\in \mathcal{A}^\kappa_r}}
\left\vert
	\mathds{E}\left[
		P\left(
			\left(
				\epsilon\sqrt{\frac{N}{\sigma_t^2(\mathbf{e_\alpha},k)}}
				u_k(\alpha)
			\right)_{\alpha\in J}
		\right)
	\right]
	-
	\mathds{E}\left[
		P\left(
			\left(
			\mathcal{N}_j
			\right)_{j=1}^m
		\right)
	\right]
\right\vert
\leqslant
N^{-\delta}
\]
where $\epsilon$ is taken uniformly at random in the set $\{-1,1\}.$ The convergence in the Hermitian case is similar by taking $\epsilon$ uniform on the circle.
\end{corollary}

This result states that the entries of bulk eigenvectors are asymptotically independent Gaussian random variables with variance ${\sigma_t}^2$ which
answers a conjecture from \cite{allez2014eigenvectors}*{Section 3.2}, stated in the more restrictive case where $W$ is GOE. The asymptotic normality of the eigenvectors gives the following weak form of quantum unique ergodicity.

\begin{corollary}\label{QUE}(Weak Quantum Unique Ergodicity)
Let $W$ as in \ref{def:wignernonsmooth} and $W_t$ as in Definition \eqref{def:wigner}. There exists $\vartheta>0$ such that for any $c>0$, there exists $C>0$ such that the following holds : for any $I\subset [\![1,N]\!]$ and $k\in \mathcal{A}^\kappa_{r}$, we have
\begin{equation}
\mathds{P}\left(\frac{Nt}{\vert I\vert}\left\vert\sum_{\alpha\in I}\vert u_k(\alpha)\vert^2-\frac{1}{N}\sum_{\alpha\in I}\sigma_t^2(\mathbf{e}_\alpha,k)\right\vert >c\right)\leqslant C(N^{-\vartheta} + \vert I\vert^{-1}).
\end{equation} 
\end{corollary}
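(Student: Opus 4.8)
The plan is a second moment (Chebyshev) estimate whose only substantive input is the quantitative form of the convergence of moments behind Theorem~\ref{theo:resultGaus}. First I would rewrite the event: setting $Y_\alpha:=N|u_k(\alpha)|^2-\sigma_t^2(\alpha,k)$ and
\[
X:=\frac{t}{|I|}\sum_{\alpha\in I}Y_\alpha=\frac{Nt}{|I|}\Big(\sum_{\alpha\in I}|u_k(\alpha)|^2-\frac1N\sum_{\alpha\in I}\sigma_t^2(\alpha,k)\Big),
\]
the probability to be controlled is exactly $\mathds{P}(|X|>c)$, and by Markov's inequality $\mathds{P}(|X|>c)\leqslant \mathds{E}[X^2]/c^2$, so it suffices to prove $\mathds{E}[X^2]\leqslant C(N^{-\vartheta}+|I|^{-1})$ for a fixed $\vartheta>0$. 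Expanding the square,
\[
\mathds{E}[X^2]=\frac{t^2}{|I|^2}\Big(\sum_{\alpha\in I}\mathds{E}[Y_\alpha^2]+\sum_{\substack{\alpha,\beta\in I\\ \alpha\neq\beta}}\mathds{E}[Y_\alpha Y_\beta]\Big),
\]
so the task reduces to estimating the diagonal and off-diagonal covariances of the $Y_\alpha$.

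Next I would estimate these covariances from the moment asymptotics of Theorem~\ref{theo:resultGaus} (equivalently Corollary~\ref{cor:Gaussian}) taken with the canonical directions $\mathbf{e}_\alpha$, in the quantitative form that the eigenvector moment flow analysis actually delivers: for some $\vartheta>0$, uniformly in $k\in\mathcal{A}^\kappa_{r}$ and $\alpha\neq\beta$,
\[
\mathds{E}\big[N|u_k(\alpha)|^2\big]=\sigma_t^2(\alpha,k)(1+\mathcal{O}(N^{-\vartheta})),\qquad \mathds{E}\big[(N|u_k(\alpha)|^2)^2\big]=\mathcal{O}\big(\sigma_t^4(\alpha,k)\big),
\]
\[
\mathds{E}\big[(N|u_k(\alpha)|^2)(N|u_k(\beta)|^2)\big]=\sigma_t^2(\alpha,k)\,\sigma_t^2(\beta,k)(1+\mathcal{O}(N^{-\vartheta})),
\]
the product form in the last line reflecting that two distinct weighted canonical vectors are orthogonal, so the limiting asymptotically Gaussian entries $u_k(\alpha)$, $u_k(\beta)$ decouple. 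Subtracting the means, the four leading terms cancel in the off-diagonal covariance, leaving $\mathds{E}[Y_\alpha Y_\beta]=\mathcal{O}\big(N^{-\vartheta}\sigma_t^2(\alpha,k)\sigma_t^2(\beta,k)\big)$ for $\alpha\neq\beta$, while $\mathds{E}[Y_\alpha^2]=\mathcal{O}\big(\sigma_t^4(\alpha,k)\big)$.

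Then I would insert two elementary bounds on the variance profile. Since $D$ is $(\eta_\star,r)$-regular at $E_0$ and $k\in\mathcal{A}^\kappa_{r}$, one has $\mathrm{Im}\,m_t(\gamma_{k,t}+\mathrm{i}\eta)\geqslant c>0$, so by \eqref{defsigma}, $\sigma_t^2(\alpha,k)\leqslant t/(t\,\mathrm{Im}\,m_t(\gamma_{k,t}+\mathrm{i}\eta))^2\leqslant C/t$ for every $\alpha$; hence $\sum_{\alpha\in I}\sigma_t^2(\alpha,k)\leqslant C|I|/t$ and $\sigma_t^4(\alpha,k)\leqslant (C/t)\sigma_t^2(\alpha,k)$. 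Plugging these in, the diagonal part of $\mathds{E}[X^2]$ is $\lesssim \frac{t^2}{|I|^2}\cdot\frac1t\sum_{\alpha\in I}\sigma_t^2(\alpha,k)\lesssim \frac{t}{|I|^2}\cdot\frac{|I|}{t}=|I|^{-1}$, and the off-diagonal part is $\lesssim \frac{t^2N^{-\vartheta}}{|I|^2}\big(\sum_{\alpha\in I}\sigma_t^2(\alpha,k)\big)^2\lesssim \frac{t^2N^{-\vartheta}}{|I|^2}\cdot\frac{|I|^2}{t^2}=N^{-\vartheta}$, so $\mathds{E}[X^2]\leqslant C(N^{-\vartheta}+|I|^{-1})$ and the corollary follows, the final constant being $C/c^2$. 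The dependence of $\sigma_t^2$ on $\eta$ is immaterial here, since $|\partial_z m_t|\leqslant 1/t$ makes the profile $\eta$-independent up to $\mathcal{O}(\eta/t)$ errors (cf. Remark~\ref{rk:eta}).

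The hard part is not this bookkeeping but the ingredient it rests on: a quantitative, rate-$N^{-\vartheta}$ version of the degree-four moment asymptotics for $\langle\mathbf{e}_\alpha,u_k\rangle$, uniform over bulk indices $k$ and over pairs $(\alpha,\beta)$. This is precisely the non-equilibrium eigenvector moment flow estimate the paper builds for Theorem~\ref{theo:resultGaus}, together with the continuity/coupling argument of Section~\ref{sec:cont} transporting it from the Dyson--Brownian-motion-regularized matrix back to $W_t$; once that is in hand with an explicit rate, the present statement is the routine computation above.
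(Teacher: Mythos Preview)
Your proof is correct and is essentially the same as the paper's: a Chebyshev second-moment bound, expanding into diagonal and off-diagonal pieces, and controlling each using the quantitative degree-two and degree-four moment asymptotics from Theorem~\ref{theo:resultGaus} together with the elementary bound $\sigma_t^2(\alpha,k)\leqslant C/t$. The only cosmetic differences are that the paper organizes the expansion as $\mathfrak{A}-2\mathfrak{B}+\mathfrak{C}$ rather than as covariances of your $Y_\alpha$, and that the moment error coming from Proposition~\ref{prop:moment} is actually \emph{additive} of size $\mathcal{O}(N^{-\vartheta}t^{-n})$ rather than multiplicative as you write---but since you immediately bound $\sigma_t^2\leqslant C/t$ this makes no difference to the outcome.
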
 

This high probability bound is not the strongest form of quantum unique ergodicity one can obtain for random matrices. Indeed, if we consider the Gaussian ensembles for which the eigenbasis is Haar-distributed on the orthogonal group and each eigenvectors is uniformly distributed on the sphere, one can get that  for any $\varepsilon$ and $D$ positive constants, for any $1\leqslant k\leqslant N$,
\[
\mathds{P}\left(
	\left\vert 
		\sum_{\alpha\in I}\vert u_k(\alpha)\vert^2-\frac{\vert I\vert}{N}
	\right\vert
	\geqslant N^\varepsilon\frac{\sqrt{I}}{N}
\right)\leqslant N^{-D}\quad\text{for any }N\text{ sufficiently large}
\]

In this paper, we will obtain a similar overwhelming probability bound on the probability mass of a single eigenvector with an explicit error for a more restrictive model of matrices: deformed random matrix with smooth entries given by the following definition or from a Gaussian divisible ensemble.

\begin{definition}\label{def:smoothwig}
A smooth Wigner matrix $W$ is a $N\times N$ Hermitian/symmetric matrix with the following conditions
\begin{itemize}
\item[(i)] The matrix entries $(W_{ij})_{1\leqslant i\leqslant j\leqslant N}$ are independent and identically distributed random variables following the distribution $N^{-1/2}\nu$ where $\nu$ has mean zero and variance $1$.
\item[(ii)]The distribution $\nu$ has a positive density $\nu(x)=e^{-\Theta(x)}$ such that for any $j$, there are constants $C_0$ and $C_1$ such that
\begin{equation}\label{eq:regucond}
\vert\Theta^{(j)}(x)\vert\leqslant C_0(1+x^2)^{C_1}
\end{equation} 
\item[(iii)] The tail of the distribution $\nu$ has a subexponential decay. In other words, there exists $C$ and $q$ two positive constants such that
\begin{equation}\label{eq:subexp}
\int_\mathbb{R}\mathds{1}_{\vert x\vert\geqslant y}\mathrm{d}\nu(x)\leqslant C\exp(-y^q)
\end{equation}
\end{itemize}
\end{definition}

We need the smoothness assumptions on $W$ in order to use the reverse heat flow techniques from \cites{erdos2010peche, erdos2011universality}. Indeed, our result is an overwhelming probability bound on the eigenvectors of $W_t$. 
We think, however, that this property holds for a larger matrix ensembles and that the smoothness property is simply technical.

In the following, since the eigenvectors are concentrated on $Nt$ sites, it is relevant to define the following notation for any set (which can be $N$-dependent) $A$, denote
$$\widehat{A}=\frac{\vert A\vert}{Nt}\wedge 1.$$
Indeed, having errors involving $\widehat{A}$ allows us to get bounds improving for $\vert A\vert\leqslant Nt$ but still holding for $\vert A\vert\gg Nt$. 
\begin{theorem}\label{theo:resultQUE}
Let $\kappa\in(0,1)$, $\omega$ a small positive constant, for instance $\omega<\varepsilon'/10$. Let $t\in\mathcal{T}_\omega$, $I\subset [\![1,N]\!]$ be a deterministic ($N$-dependent) set, $W$ as in Definition \ref{def:smoothwig} and $W_t$ as in \eqref{def:wigner}.
Define now
\begin{equation}\label{eq:defxi}
\Xi=\frac{\widehat{I}}{(Nt)^{1/3}}\quad\text{and}\quad\sigma_t^2(\alpha,k):=\sigma_t^2(\alpha,k,\eta_0)\quad\text{with}\quad N\eta_0=\frac{\widehat{I}^{\,\,2}}{\Xi^2}.
\end{equation}
Then we have, for any $\varepsilon>0$ (small) and $D>0$ (large) and for $k,\,\ell\in\mathcal{A}_{r}^\kappa$ with $k\neq\ell$, in the symmetric case
\begin{align}\label{eq:resultQUE}
\mathds{P}\left(\left\vert\sum_{\alpha\in I}\left(u_k(\alpha)^2-\frac{1}{N}\sigma_t^2(\alpha,k)\right)\right\vert+\left\vert \sum_{\alpha\in I}u_k(\alpha){u}_\ell(\alpha)\right\vert\geqslant N^\varepsilon \Xi\right)\leqslant N^{-D}
\end{align}
and in the Hermitian case,
\begin{align}
\mathds{P}\left(\left\vert\sum_{\alpha\in I}\left(\vert u_k(\alpha)\vert^2-\frac{1}{2N}\sigma_t^2(\alpha,k)\right)\right\vert+\left\vert \sum_{\alpha\in I}u_k(\alpha)\bar{u}_\ell(\alpha)\right\vert\geqslant N^\varepsilon \Xi\right)\leqslant N^{-D}.
\end{align}
\end{theorem}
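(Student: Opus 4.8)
The plan is to follow the three-step strategy for Dyson Brownian motion, combined with the eigenvector moment flow of Bourgade--Yau, and then upgrade the resulting moment estimates to an overwhelming-probability statement. First I would introduce the observable
$$
\mathcal{Q}_k^I(t) = \sum_{\alpha\in I}\Big(u_k(\alpha)^2 - \tfrac{1}{N}\sigma_t^2(\alpha,k)\Big),\qquad
\mathcal{R}_{k\ell}^I(t) = \sum_{\alpha\in I}u_k(\alpha)u_\ell(\alpha),
$$
and compute high moments $\mathbb{E}\big[(\mathcal{Q}_k^I)^{2p}\big]$ and $\mathbb{E}\big[(\mathcal{R}_{k\ell}^I)^{2p}\big]$ for $p$ growing slowly with $N$. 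The strategy is: (i) run a short-time Dyson Brownian motion of length $\tau\sim\eta_0$ starting from $W_t$, so that $W_{t,\tau} = W_t + \sqrt{\tau}\,\widetilde{W}$ with $\widetilde W$ an independent GOE/GUE; apply the eigenvector moment flow to this dynamics to establish that the rescaled entries have asymptotically the Gaussian moments encoded by $\sigma^2_{t+\tau}(\alpha,k,\eta)$, and that the mixed overlaps are small; (ii) use the local law and rigidity estimates on $\mathcal D_r^{\vartheta,\kappa}$ (from \cite{lee2016bulk,landon2015convergence,bourgade2016eigenvector}) together with the a priori delocalization bounds to control the sums over $I$ and to compare $\sigma^2_{t+\tau}$ with $\sigma^2_t$, the error being of size at most $N^\varepsilon\Xi$ because $\tau\ll t$ and $|\partial_z m_t|\le 1/t$; (iii) remove the small Gaussian component by a reverse heat flow / Green's function comparison argument, which is where the smoothness hypotheses of Definition \ref{def:smoothwig} enter.

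For step (i), the core computation is the eigenvector moment flow applied to the generalized moment observable. I would let $I$ play the role of the ``base'' direction set, and track the expectation of products $\prod u_{k}(\alpha_j)^{2}$ under the flow; as in \cite{bourgade2016emf,bourgade2018band} this expectation satisfies a parabolic equation whose equilibrium is given by the Gaussian/GUE values, and the maximum principle plus the a priori local law give convergence to equilibrium on timescales $\tau\gg N^{\vartheta}\eta_0 / (\text{spectral spread})$. The precise choice $N\eta_0 = \widehat{I}^{\,2}/\Xi^2$ with $\Xi = \widehat I/(Nt)^{1/3}$ is dictated by optimizing the trade-off between (a) the relaxation error in the moment flow, which improves as $\tau$ (hence $\eta_0$) grows, and (b) the comparison error $\sigma^2_{t+\tau}-\sigma^2_t$ and the perturbation-of-eigenvectors error, which worsen as $\tau$ grows; balancing these two yields the $(Nt)^{1/3}$ gain. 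I expect to need the ``$\widehat{A}$'' bookkeeping to keep the variance factor $N^{-1}\sum_{\alpha\in I}\sigma_t^2(\alpha,k)$ sharp both when $|I|\le Nt$ (genuine concentration on $Nt$ sites) and when $|I|\gg Nt$.

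For step (ii), the main work is deterministic: showing that $\sum_{\alpha\in I}\sigma_t^2(\alpha,k,\eta)$ is stable under the small change $t\to t+\tau$ and under a change of the spectral parameter $\eta$ within the allowed window, and that it is well-approximated by a resolvent quantity to which the local law applies. Using $(D_k-\gamma_{\alpha,t})^2 + (t\,\mathrm{Im}\,m_t)^2$ as the denominator, the sum over $\alpha$ is essentially $\mathrm{Im}\,m_t$-type and can be controlled by the $(\eta_\star,r)$-regularity hypothesis, giving $\sum_\alpha \sigma_t^2(\alpha,k) = \Theta(|I|/(Nt)\wedge 1)\cdot N$ up to constants — this is exactly what makes the right-hand side $N^\varepsilon\Xi$ the correct order. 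The mixed term $\mathcal R_{k\ell}^I$ has equilibrium value $0$ by orthogonality plus the GUE computation (no $\sigma^2$ subtraction needed), so one only needs the upper bound.

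The hardest part, I expect, is step (iii): the removal of the $\sqrt{\tau}$-Gaussian component while preserving an \emph{overwhelming}-probability bound (not merely a moment or high-probability bound) with the explicit error $N^\varepsilon\Xi$. The reverse heat flow of \cite{Pec,erdos2011universality} constructs, from a smooth $\nu$ satisfying \eqref{eq:regucond}--\eqref{eq:subexp}, an approximate preimage under the Ornstein--Uhlenbeck flow, so that $D+\sqrt{t}W$ is close in the appropriate sense to $D+\sqrt{t-\tau}W' + \sqrt{\tau}\,\mathrm{GOE}$; one must then run a Green's function / resolvent comparison for the functionals $\mathcal Q_k^I$ and $\mathcal R_{k\ell}^I$, which are low-degree polynomials in entries of the resolvent of $W_t$ evaluated at spectral parameter of imaginary part $\eta_0\gg N^{-1+\vartheta}$, and check that the comparison error is subleading. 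The subtleties are: keeping track of the deformation $D$ through the reverse heat flow (the flow must act only on the Wigner part), getting the comparison error below $\Xi$ rather than below the trivial scale $|I|/N$, and converting the resulting control of all moments up to order $p = p(N)\to\infty$ into the $N^{-D}$ bound by Markov's inequality with $p$ chosen as a function of $D$ and $\varepsilon$. I would organize step (iii) so that the moment bound from steps (i)--(ii) reads $\mathbb{E}[|\mathcal Q_k^I|^{2p}] \le (N^{\varepsilon'}\Xi)^{2p}$ for every fixed $p$ and all large $N$, after which Markov gives \eqref{eq:resultQUE} upon taking $p > D/(2(\varepsilon-\varepsilon'))$.
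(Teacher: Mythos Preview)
Your three-step skeleton is correct, but the proposal has a genuine gap in step (i) and a misconception in step (iii).

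\textbf{Step (i): wrong observable.} You propose to ``track the expectation of products $\prod u_k(\alpha_j)^2$ under the flow'' using the moment observable of \cite{bourgade2016emf}. That observable encodes $f_\tau(\bm\eta)=\mathds{E}\big[\prod_k (\sqrt{N}\langle\mathbf q,u_k\rangle)^{2\eta_k}/a(2\eta_k)\mid\bm\lambda\big]$ for a \emph{fixed} direction $\mathbf q$, so it gives information about $|\langle\mathbf q,u_k\rangle|^2$, not about $\sum_{\alpha\in I}u_k(\alpha)^2$. Taking $\mathbf q=|I|^{-1/2}\mathds 1_I$ yields $|I|^{-1}\big(\sum_{\alpha\in I}u_k(\alpha)\big)^2$, which is the wrong quantity. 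What the paper uses instead is the \emph{perfect matching observable} of \cite{bourgade2018band}: one sets $p_{ij}=\sum_{\alpha\in I}u_i(\alpha)u_j(\alpha)$ (with $p_{ii}$ centered by a constant $C_w$ independent of $i$), and for each particle configuration $\bm\eta$ defines $F_\tau(\bm\eta)$ as a normalized sum over perfect matchings of products of $p_{ij}$'s. This $F_\tau$ satisfies the same eigenvector moment flow as $f_\tau$, and crucially $\mathds E[p_{ij}^n\mid\bm\lambda]$ is bounded by values of $F_\tau$ on three specific configurations (the paper's Lemma~\ref{lem:holder}). The maximum principle is then run on $F_\tau$; because the centering $C_w$ cannot depend on $i$, one must first localize to a window of width $w\ll t$ around a reference index $i_0$, incurring an extra error $\widehat I\,w/t$. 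The resulting Gronwall-type bound (Proposition~\ref{prop:mainque}) relates $\sup F_\tau$ on an inner interval to $\sup F_\tau$ on a slightly larger one, with a small multiplicative constant plus additive terms of order $(\widehat I/\sqrt{N\tau}+\widehat I\,w/t)(\sup)^{(n-1)/n}$. This is iterated over a nested sequence of intervals --- a multiscale bootstrap you do not mention --- until the supremum drops below $(N^{C\varepsilon}\Xi(\tau))^n$, after which Markov with $n\sim D/\varepsilon$ gives the overwhelming probability. The optimization $\tau_0=(t^2/N)^{1/3}$, $w=N^\varepsilon\tau_0$, $\ell=N\sqrt{\tau_0 w}$ then produces $\Xi=\widehat I/(Nt)^{1/3}$.

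\textbf{Step (iii): no Green's function comparison is needed.} You write that after reverse heat flow ``one must then run a Green's function / resolvent comparison for the functionals $\mathcal Q_k^I,\mathcal R_{k\ell}^I$''. This would be problematic: these are not smooth functions of resolvent entries at a fixed spectral parameter, and keeping the comparison error below $\Xi$ (rather than the trivial $|I|/N$) is exactly the hard point. The paper avoids this entirely. Reverse heat flow (Proposition~\ref{prop:reverseheat}) produces, for any fixed $K$, a density $\psi_s$ such that the \emph{total variation} distance between the law of $W$ and that of $e^{sA}\psi_s$ is $O(N^2 s^K)$. Since step (ii) already gives the overwhelming-probability bound for the Gaussian-divisible ensemble $D+\sqrt{t(1-s)}\widetilde W+\sqrt{ts}\,\mathrm{GOE}$ with $s=\tau_0/t$, and total variation dominates the difference of probabilities of any event, choosing $K$ large enough (depending only on $D$ and $\varepsilon$, not on $N$) transfers the bound directly to $W_t$. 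The smoothness hypothesis in Definition~\ref{def:smoothwig} is used only to guarantee the existence of $\psi_s$ with the required total-variation estimate; no moment or resolvent comparison for $\mathcal Q_k^I$ is performed.
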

\begin{remark}The choice of $\eta_0$ depends on our proof and is the one we should take to optimize our error $\Xi$. However, this error and the choice of $\eta_0$ do not seem optimal since we actually expect to have some form of Gausian fluctuations around this deterministic profile.
\end{remark}
\subsection{Method of Proof}
Our proof is based on the three-step strategy from \cites{erdos2010peche,erdos2011universality}  (see \cite{erdos2017dynamical} for a recent book presenting this method). The first step is to have an optimal, local control of the spectral elements of the matrix ensemble given by a local law on the resolvent. The second step is to obtain the wanted result for a relaxation of the model by a small Gaussian perturbation. Finally, the third and last step consists of removing this Gaussian part. We will give the proof of Theorem \ref{theo:resultGaus}, Corollary \ref{QUE}, Theorem \ref{theo:resultQUE} only in the symmetric case and refer the reader to \cites{bourgade2017eigenvector, bourgade2018random} for the tools needed in the Hermitian case.
\paragraph{First step: local laws for our model.} In \cite{landon2017convergence}, Landon-Yau showed a local law for the Dyson Brownian motion with a diagonal initial condition at all times. This result gives us an averaged local law on the Stieltjes transform but also an entrywise anisotropic local law for the resolvent. Since we want to consider any projection of the eigenvectors, we will also need a local law on the quadratic form $\langle \mathbf{q},G(z)\mathbf{q}\rangle$. This control of the resolvent for mesoscopic perturbation has been showed in \cite{bourgade2017huang}. Note that these results were done in the Gaussian case but can easily be generalized to the Wigner case with the right assumptions on moments. 
\paragraph{Second step: short time relaxation.}The second step consists of perturbing  $W_t$ by a small Gaussian component. We will obtain this perturbed model by making $W_t$ undergo the Dyson Brownian motion given by the following definition. 
\begin{definition}Here is our choice of Dyson Brownian motion.\newline
Let $B$ be a $N\times N$ symmetric matrix such that $B_{ij}$ for $i<j$ and $B_{ii}/\sqrt{2}$ are independent standard brownian motions. The $N\times N$ symmetric Dyson Brownian motion with initial condition $H_0$ is defined as 
\begin{equation}\label{dyson1}
H_s=H_0+\frac{1}{\sqrt{N}}B_s.
\end{equation}

\end{definition}

We also give the dynamics followed by the eigenvalues and the eigenvectors of such matrices.

\begin{definition}\label{def:dysondyn}
Let $\bm{\lambda_0}$ be in the simplex $\Sigma_N=\{\lambda_1<\dots<\lambda_N\}$, $\bm{u_0}$ be an orthogonal $N\times N$ matrix, and $B$ as in \eqref{dyson1}. Consider the dynamics
\begin{align}
\label{eq:dysoneigval}
\mathrm{d}\lambda_k&=\frac{\mathrm{d}B_{kk}}{\sqrt{N}}+\frac{1}{N}\sum_{\ell\neq k}\frac{\mathrm{d}s}{\lambda_k-\lambda_\ell},\\
\mathrm{d}u_k&=\frac{1}{\sqrt{N}}\sum_{\ell\neq k}\frac{\mathrm{d}B_{k\ell}}{\lambda_k-\lambda_\ell}u_\ell-\frac{1}{2N}\sum_{\ell\neq k}\frac{\mathrm{d}s}{(\lambda_k-\lambda_\ell)^2}u_k\label{dysonvect}
\end{align}
with initial condition $(\bm{\lambda_0},\bm{u_0})$,

\end{definition}

This eigenvector flow was first computed in different contexts such as \cite{anderson2010introduction} for GOE or GUE matrices, \cite{bru1989diffusions} for real Wishart processes and \cite{norris1986brownian} for Brownian motion on ellipsoids.
\begin{remark}
 If $\bm{\lambda_0}$ and $\bm{u_0}$ are the eigenvalues and eigenvectors of a fixed matrix $H_0$, then the solution to the dynamics from Definition \ref{def:dysondyn} have the same distribution for any time $s$ as  the eigenvalues and eigenvectors of
\begin{align*}
H_s=H_0+\sqrt{s}\mathrm{GOE}
\end{align*} 
with GOE being a matrix from the normalized Gaussian Orthogonal Ensemble (in the sense that its off-diagonal entries have variance $1/N$). In this paper, taking $W$ to be such a matrix in \eqref{def:wigner}, we study the eigenvectors of the Dyson Brownian motion with a diagonal initial condition after a mesoscopic time.
\end{remark}
We will then need to study the eigenvectors of $H_\tau$ for a small $N^{-1}\ll \tau\ll t$. The convergence of joint moments of eigenvectors projections will be obtained by the maximum principle technique introduced in \cite{bourgade2017eigenvector}. It is based on analyzing the dynamics followed by these moments. We will now recall notations and results on this eigenvector moment flow.\\
\indent Take $\mathbf{q}\in\mathbb{R}^N$ such that $\norme{\mathbf{q}}_2=1$ a fixed direction onto which we will project our eigenvectors. For $u_1^H,\dots,u_N^H$ the eigenvectors of the matrix \eqref{dyson1}, define
\begin{equation}\label{zk}
z_k(s)=\sqrt{N}\langle\mathbf{q},u_k^H(s)\rangle.
\end{equation}

Now for $m\in[\![1,N]\!]$, denote by $j_1,\dots,j_m$ positive integers and let $i_1,\dots,i_m$ in $[\![1,N]\!]$ be distinct indices. We will consider the following normalized polynomials
\begin{align}
\label{eq:polysym}Q^{j_1,\dots,j_m}_{i_1,\dots,i_m}&=\prod_{l=1}^mz_{i_l}^{2j_l}a(2j_l)^{-1}\quad\text{where}\quad a(n)=\prod_{k\leq n,k\, \text{odd}}k
\end{align}
Note that $a(2n)=\mathds{E}[\mathcal{N}^{2n}]$ with $\mathcal{N}$ a standard Gaussian random variables.\\ 
\indent Consider a configuration of particles $\bm{\xi}:[\![1,N]\!]\rightarrow\mathbb{N}$ where $\xi_j:=\bm{\xi}(j)$ is seen as the number of particles at the site $j$. We denote $\mathcal{N}(\bm{\xi})=\sum_j\xi_j$ the total number of particles in the configuration $\bm{\xi}.$\\
\indent Define $\bm{\xi}^{i,j}$ to be the configuration obtained by moving one particle from $i$ to $j$. If there is no particle in $i$ then $\bm{\xi}^{i,j}=\bm{\xi}$. It is clear that we can map $\{(i_1,j_1),\dots,(i_m,j_m)\}$ with distinct $i_k$'s and positive $j_k$'s summing to an $n>0$ to a configuration $\bm{\xi}$  with $\xi_{i_k}=j_k$ and $\eta_l=0$ if $l\notin \{i_1,\dots,i_m\}.$\\
Define now, given this map, 
\begin{equation}\label{flambda}
f_{\bm{\lambda},s}(\bm{\xi}):=\mathds{E}\left[{Q}^{j_1,\dots,j_m}_{i_1,\dots,i_m}\vert\bm{\lambda}\right],
\end{equation}
a $n$-th joint moment of the coordinates of $u_k$. The conditioning here is on the full path of eigenvalues from 0 to $\infty$. The next theorem gives the eigenvector moment flow that $f_{\bm{\lambda},s}$ undergoes. 

\begin{theorem}[\cite{bourgade2017eigenvector}*{Theorem 3.1}]\label{theo:eigmomflow}
Suppose that $\bm{u}$ is the solution of the symmetric Dyson vector flow \eqref{dysonvect} and $f_{\bm{\lambda},s}(\bm{\xi})$ is given by \eqref{flambda} with the polynomials $Q_s$. Then it satisfies the equation
\begin{equation}\label{discs}
\partial_sf_{\bm{\lambda},s}(\bm{\xi})=\frac{1}{N}\sum_{i\neq j}\frac{2\xi_i(1+2\xi_j)\left(f_{\bm{\lambda},s}(\bm{\xi}^{i,j})-f_{\bm{\lambda},s}(\bm{\xi})\right)}{(\lambda_i-\lambda_j)^2}.
\end{equation}

\end{theorem}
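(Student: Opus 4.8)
The plan is to prove Theorem~\ref{theo:eigmomflow} by a direct, if somewhat lengthy, computation starting from the stochastic dynamics \eqref{dysonvect} satisfied by the eigenvectors. Since $f_{\bm{\lambda},s}(\bm{\eta})$ is a conditional expectation, given $\bm{\lambda}$, of the polynomial $Q_s = \prod_{l} z_{i_l}^{2j_l} a(2j_l)^{-1}$ in the variables $z_k(s) = \sqrt{N}\langle \mathbf{q}, u_k^H(s)\rangle$, the first step is to write It\^o's formula for each $z_k(s)$. From \eqref{dysonvect} we read off
\[
\mathrm{d}z_k = \frac{1}{\sqrt{N}}\sum_{\ell\neq k}\frac{z_\ell}{\lambda_k-\lambda_\ell}\,\mathrm{d}B_{k\ell} - \frac{1}{2N}\sum_{\ell\neq k}\frac{z_k}{(\lambda_k-\lambda_\ell)^2}\,\mathrm{d}s,
\]
so the quadratic covariations are $\mathrm{d}\langle z_k, z_k\rangle = \frac{1}{N}\sum_{\ell\neq k}\frac{z_\ell^2}{(\lambda_k-\lambda_\ell)^2}\,\mathrm{d}s$ and, for $k\neq k'$, $\mathrm{d}\langle z_k, z_{k'}\rangle = \frac{1}{N}\frac{z_k z_{k'}}{(\lambda_k-\lambda_{k'})^2}\,\mathrm{d}s$ (using that the Brownian motions $B_{k\ell}$ are independent up to symmetry, and taking care of the sign coming from $B_{k\ell}=B_{\ell k}$). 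The key structural point is that the generator of this diffusion, acting on polynomials in $\bm z$, has a block structure: on monomials $\prod_l z_{i_l}^{2j_l}$ of a fixed total degree $2n$, it maps into the span of such monomials. One then applies the generator $\mathcal{L}$ to $Q_s$ and takes the conditional expectation over $\bm\lambda$; the martingale part vanishes, giving $\partial_s f_{\bm\lambda,s}(\bm\eta) = \mathds{E}[(\mathcal{L}Q)_s \mid \bm\lambda]$.

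The second step is the combinatorial heart: rewriting $\mathds{E}[(\mathcal{L}Q_s)]$ in terms of the polynomials $Q$ attached to neighbouring configurations $\bm\eta^{i,j}$. The drift term contributes, for each site $i$ with $\eta_i$ particles, a factor involving $-\frac{1}{2N}\sum_{\ell\neq i}\frac{2\eta_i}{(\lambda_i-\lambda_\ell)^2}$ times $Q$ (the $2\eta_i$ from differentiating $z_i^{2\eta_i}$ once in the drift of $z_i$). The diffusion term splits into ``diagonal'' pieces (both derivatives hitting $z_i$, using $\mathrm{d}\langle z_i,z_i\rangle$) and ``cross'' pieces (one derivative on $z_i$, one on $z_j$, using $\mathrm{d}\langle z_i, z_j\rangle$). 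The diagonal piece produces, after differentiating $z_i^{2\eta_i}$ twice, a factor $2\eta_i(2\eta_i-1)$ and replaces $z_i^{2\eta_i-2} z_\ell^2$, i.e.\ it moves one particle from $i$ to $\ell$; here the normalization $a(2j)$ with $a(n)=\prod_{k\le n,\ k\text{ odd}}k$ is exactly designed so that the coefficients telescope correctly (since $a(2\eta_i)/a(2\eta_i-2) = 2\eta_i-1$, the awkward factor $2\eta_i-1$ gets absorbed and one is left with $2\eta_i$, and similarly the factor $(2\eta_\ell+1)$ appears from $a(2\eta_\ell+2)/a(2\eta_\ell)$ when a particle lands at $\ell$). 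The cross piece, when $\ell$ already carries particles, contributes the $2\eta_i\cdot 2\eta_j$-type terms that combine with the diagonal ones. Collecting everything, one should find that all the purely $Q$-proportional terms (no particle moved) coming from the drift cancel against part of the diffusion terms, and what survives is precisely $\frac{1}{N}\sum_{i\neq j}\frac{2\eta_i(1+2\eta_j)}{(\lambda_i-\lambda_j)^2}\big(f_{\bm\lambda,s}(\bm\eta^{i,j}) - f_{\bm\lambda,s}(\bm\eta)\big)$.

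The third step is simply bookkeeping: checking that the case $\eta_i = 0$ is consistent (the coefficient $2\eta_i$ kills the term, matching the convention $\bm\eta^{i,j}=\bm\eta$), that the identity $a(2\eta_i)^{-1}\cdot(2\eta_i)(2\eta_i-1)\cdot a(2\eta_i-2) = 2\eta_i$ and its partner for the receiving site are applied everywhere, and that summation indices are symmetrized so that the final sum runs over ordered pairs $i\neq j$ with the stated weights. I expect the main obstacle to be the second step: correctly tracking the several sources of terms (drift, diagonal diffusion with $\ell$ empty, diagonal diffusion with $\ell$ occupied, cross diffusion) and verifying that the non-jump terms cancel exactly, so that the right-hand side is a genuine discrete Laplacian (a generator of a jump process) rather than a generator-plus-potential. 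The choice of the normalizing constants $a(2j)$ is what makes this cancellation and telescoping work, and getting those factors right is the delicate part; everything else is routine It\^o calculus and relabeling. For the Hermitian case the same computation goes through with the complex analogues of the polynomials, and we refer to \cite{bourgade2016emf} as stated.
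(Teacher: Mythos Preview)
The paper does not give its own proof of this theorem: it is quoted verbatim as a result from \cite{bourgade2016emf} and used as a black box, so there is no ``paper's proof'' to compare against. Your sketch is essentially the argument of \cite{bourgade2016emf}: apply It\^o's formula to the $z_k$'s, compute the generator on the normalized monomials $Q$, and check that the $a(2j)$ normalization turns the raw coefficients $2\eta_i(2\eta_i-1)$ and $2\eta_j+1$ into the clean jump rates $2\eta_i(1+2\eta_j)$, with the drift contribution cancelling the diagonal part so that a genuine Markov generator remains.

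One small correction worth flagging: your cross covariation has the wrong sign. Since $B_{k\ell}=B_{\ell k}$, the term in $\mathrm{d}z_k$ carrying $\mathrm{d}B_{k k'}$ has coefficient $z_{k'}/(\lambda_k-\lambda_{k'})$ while the term in $\mathrm{d}z_{k'}$ carrying the same Brownian motion has coefficient $z_k/(\lambda_{k'}-\lambda_k)$, so
\[
\mathrm{d}\langle z_k,z_{k'}\rangle_s \;=\; -\,\frac{1}{N}\,\frac{z_k z_{k'}}{(\lambda_k-\lambda_{k'})^2}\,\mathrm{d}s,
\]
with a minus sign. This sign is not cosmetic: it is precisely what makes the ``cross'' diffusion contribution (hitting $z_i^{2\eta_i}$ once and $z_j^{2\eta_j}$ once) combine correctly with the drift and the diagonal diffusion so that all the terms proportional to $Q(\bm\eta)$ assemble into $-\sum_{i\neq j}\frac{2\eta_i(1+2\eta_j)}{N(\lambda_i-\lambda_j)^2}$, i.e.\ the outgoing rate of the jump process, rather than leaving a spurious potential term. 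With that sign fixed, your outline is correct and complete.
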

\begin{center}
\begin{figure}[!ht]
\hspace{15.5em}\includegraphics[scale=.5]{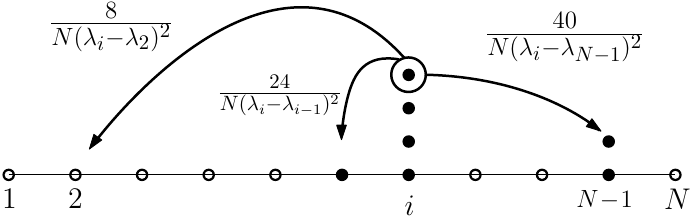}
\caption{Example of the symmetric eigenvector moment flow with a configuration of 7 particles.}
\end{figure}
\end{center}

Now that we have the expression of the eigenvector moment flow, we can give an heuristic for the apparition of a Cauchy profile in the variance \eqref{defsigma}. Indeed, the single particle case $m=1$ gives us the variance of an entry of an eigenvector. To understand this result, consider the diagonal entries of the matrix $D$ to be the quantiles of the semicircle law for instance, it is then interesting to consider the following continuous dynamics, define the operator $K$ acting on smooth functions on $[-2,2]$ as
\begin{equation}
(Kf)(x)=\int_{-2}^2\frac{f(x)-f(y)}{(x-y)^2}\mathrm{d}\rho(y).
\end{equation}
The differential equation $\partial_tf=Kf$ can be seen as a deterministic and continuous equivalent of \eqref{discs} because of the rigidity property of the Dyson Browian motion eigenvalues.  We then get the following lemma from \cite{bourgade2016fixed}

\begin{lemma}[\cite{bourgade2016fixed}]
Let $f$ be smooth with all derivatives uniformly bounded. For any $x,y\in (-2,2),$ denote $x=2\cos\theta,\,\,y=2\cos{\phi}$ with $\theta,\phi\in(0,\pi)$. Then
\begin{equation}
\left(e^{-tK}f\right)(x)=\int p_t(x,y)f(y)\mathrm{d}\rho(y)
\end{equation}

where the kernel is given by
\begin{equation}
p_t(x,y):=\frac{1-e^{-t}}{\left\vert e^{i(\theta+\phi)}-e^{-t/2}\right\vert^2\left\vert e^{i(\theta-\phi)}-e^{-t/2}\right\vert^2}.
\end{equation}

\end{lemma}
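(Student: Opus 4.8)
The plan is to diagonalise the operator $K$ in an explicit orthonormal basis of $L^2(\rho)$ and then resum the spectral expansion of $e^{-tK}$. First I would pass to the angular variables $x=2\cos\theta$, $y=2\cos\phi$ with $\theta,\phi\in(0,\pi)$, under which the semicircle law becomes $\mathrm{d}\rho=\tfrac{2}{\pi}\sin^2\phi\,\mathrm{d}\phi$; the Chebyshev polynomials of the second kind $U_n(\cos\phi)=\sin((n+1)\phi)/\sin\phi$, equivalently the monic polynomials $\widetilde U_n(x):=U_n(x/2)$ of degree $n$, then form an orthonormal basis of $L^2(\rho)$ (this is just $\tfrac{2}{\pi}\int_0^\pi\sin((m+1)\phi)\sin((n+1)\phi)\,\mathrm{d}\phi=\delta_{mn}$). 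I would also record that $K$ is symmetric and nonnegative on $L^2(\rho)$ via the symmetrisation
\[
\langle Kf,g\rangle_{L^2(\rho)}=\frac12\iint\frac{\bigl(f(x)-f(y)\bigr)\bigl(g(x)-g(y)\bigr)}{(x-y)^2}\,\mathrm{d}\rho(x)\,\mathrm{d}\rho(y),
\]
obtained by relabelling the two integration variables, the diagonal singularity being read as a principal value.

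The core step is to prove that $K\widetilde U_n=\tfrac n2\,\widetilde U_n$. Writing $\widetilde U_n(x)-\widetilde U_n(y)=(x-y)\,g(x,y)$ with $g$ a symmetric polynomial of degree $n-1$ in each variable satisfying $g(x,x)=\widetilde U_n'(x)$, one has
\[
\frac{\widetilde U_n(x)-\widetilde U_n(y)}{(x-y)^2}=\frac{\widetilde U_n'(x)}{x-y}+\frac{g(x,y)-g(x,x)}{x-y},
\]
where the last term is a polynomial in $y$ of degree $\le n-2$. Integrating against $\mathrm{d}\rho(y)$ and using the bulk identity $\mathrm{p.v.}\!\int(x-y)^{-1}\,\mathrm{d}\rho(y)=x/2$ (the real part of the semicircle Stieltjes transform on $(-2,2)$) shows that $K$ maps polynomials of degree $\le n$ into polynomials of degree $\le n$ and multiplies the leading coefficient by $n/2$; hence the matrix of $K$ in the basis $(\widetilde U_n)$ is triangular with diagonal $(n/2)_{n\ge 0}$, and being also symmetric it is diagonal. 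Consequently $e^{-tK}$, defined by its spectral decomposition (with $f$ smooth, so that the Chebyshev coefficients $\langle f,\widetilde U_n\rangle$ decay rapidly), acts by $e^{-tK}\widetilde U_n=e^{-nt/2}\widetilde U_n$, and its kernel with respect to $\mathrm{d}\rho$ is $\sum_{n\ge 0}e^{-nt/2}\widetilde U_n(x)\widetilde U_n(y)$; interchanging this sum with the integration against $f$ is justified for $t>0$ by the uniform bound $|\widetilde U_n|\le n+1$ on $[-2,2]$.

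Finally I would sum the series. With $r=e^{-t/2}\in(0,1)$ and the trigonometric form of $\widetilde U_n$, the kernel equals $\tfrac{1}{r\sin\theta\sin\phi}\sum_{m\ge 1}r^m\sin(m\theta)\sin(m\phi)$, and using $\sin(m\theta)\sin(m\phi)=\tfrac12\bigl[\cos(m(\theta-\phi))-\cos(m(\theta+\phi))\bigr]$ together with $\sum_{m\ge 1}r^m\cos(m\alpha)=\tfrac{r\cos\alpha-r^2}{1-2r\cos\alpha+r^2}$, clearing denominators and applying $\cos(\theta-\phi)-\cos(\theta+\phi)=2\sin\theta\sin\phi$ collapses everything to
\[
\frac{1-r^2}{\bigl(1-2r\cos(\theta+\phi)+r^2\bigr)\bigl(1-2r\cos(\theta-\phi)+r^2\bigr)}=\frac{1-e^{-t}}{\bigl|e^{\mathrm{i}(\theta+\phi)}-e^{-t/2}\bigr|^2\,\bigl|e^{\mathrm{i}(\theta-\phi)}-e^{-t/2}\bigr|^2}=p_t(x,y).
\]
I expect the main obstacle to be the core step: justifying rigorously the principal-value integral defining $K\widetilde U_n$ (the kernel $(x-y)^{-2}$ is only borderline integrable once the first-order vanishing of the numerator on the diagonal is used, and $\rho$ degenerates at the edges), and making the degree and leading-coefficient bookkeeping precise enough for the "symmetric plus triangular implies diagonal" argument to apply. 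The change of variables and the summation of the geometric series are then routine, and the nonnegativity of $K$ is reflected in $p_t\ge 0$, consistent with $e^{-tK}$ being a Markov semigroup.
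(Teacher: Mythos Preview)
Your proof is correct. Note, however, that the paper does not give its own proof of this lemma: it is quoted from \cite{bourgade2015fixed} and used only as a heuristic to explain why the Cauchy profile appears in the variance $\sigma_t^2$. So there is no proof in the paper to compare against.

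That said, your approach---diagonalising $K$ via the Chebyshev polynomials $\widetilde U_n$ of the second kind, reading off the eigenvalues $n/2$ from the leading-coefficient computation combined with the ``symmetric plus triangular $\Rightarrow$ diagonal'' trick, and then summing the resulting geometric series $\sum_{n\ge 0}e^{-nt/2}\widetilde U_n(x)\widetilde U_n(y)$---is exactly the standard route and is essentially what the cited reference does. The computations you outline (the angular change of variables, the principal-value identity $\mathrm{p.v.}\int(x-y)^{-1}\mathrm{d}\rho(y)=x/2$ on $(-2,2)$, and the closed-form summation via $\cos(m(\theta-\phi))-\cos(m(\theta+\phi))$) all check out, and the final algebraic collapse to $(1-r^2)/(D_+D_-)$ with $r=e^{-t/2}$ is correct. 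Your stated concern about the principal-value interpretation is the only delicate point, but your decomposition $\frac{\widetilde U_n(x)-\widetilde U_n(y)}{(x-y)^2}=\frac{\widetilde U_n'(x)}{x-y}+\text{(polynomial in }y)$ already handles it cleanly: the first term is the Hilbert transform of $\rho$ (a classical object), and the second is nonsingular.
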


Now at our small time-scale, we have
$$p_t(x,y)\asymp \frac{t}{(x-y)^2+t^2}.$$
Hence \eqref{theo:resultGaus} where $m=1$ can be considered as a result of stochastic homogenization in a non-equilibrium setting when we consider the dynamics \eqref{discs} in the bulk.
\paragraph{}For Theorem \ref{theo:resultQUE}, we will study another observable which follows the same dynamics as in Theorem \ref{theo:eigmomflow}. This new observable has been analyzed in \cite{bourgade2018random} to obtain universality for a class of band matrices.
Define now the centered eigenvectors overlaps for symmetric matrices,
\begin{eqnarray}
&\displaystyle{p_{ij}=\sum_{\alpha\in I}u_i(\alpha)u_j(\alpha),\quad i\neq j\in I},\\
&\displaystyle{p_{ii}=\sum_{\alpha\in I} u_i(\alpha)^2-C_0,\quad i\in I}
\end{eqnarray}
where $\mathbf{u}$ are the eigenvectors of $H_{s}$ and $C_0$ is any constant in the sense that it does not depend on $i$ but can depend on $N$.

Now for $\bm{\xi}$ a configuration of $n$ particles on $N$ sites, define the following set
$$\mathcal{V}_{\bm{\xi}}=\{(i,a),\,\,1\leqslant i\leqslant N,\,\, 1\leqslant a\leqslant 2\eta_i\}.$$ 
The set $\mathcal{V}$ will be a set of vertices. Consider now $\mathcal{G}_{\bm{\xi}}$ the set of perfect matchings on $\mathcal{V}_{\bm{\xi}}$. For any edge on $G$, $e=\{(i,a),(j,b)\}$, define $p(e)=p_{ij}$, $P(G)=\prod_{e\in\mathcal{E}(G)}p(e)$ and finally

\begin{equation}\label{lem:eq:perfobsdeformed}
F_{\bm{\lambda},s}(\bm{\xi})=\frac{1}{\mathcal{M}(\bm{\xi})}\mathds{E}\left[\sum_{G\in\mathcal{G}_{\bm{\xi}}}P(G)\middle\vert\bm{\lambda}\right]
\end{equation}
where $\mathcal{M}(\bm{\xi})=\prod_{i=1}^N(2\xi_i)!!$, with $(2m)!!$ being the number of perfect matchings of the complete graph on $2m$ vertices. Note that this quantity depend on the eigenvalues trajectories $\bm{\lambda}$.

\begin{figure}[H]
\centering
\begin{subfigure}[t]{.5\textwidth}
  \centering
  \includegraphics[width=.7\linewidth]{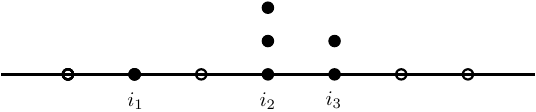}
  \caption{A configuration $\bm{\xi}$ with $\mathcal{N}(\bm{\xi})=6$ particles}
\end{subfigure}%
\hspace{-0em}\begin{subfigure}[t]{.5\linewidth}
	\centering
  \includegraphics[width=.6\linewidth]{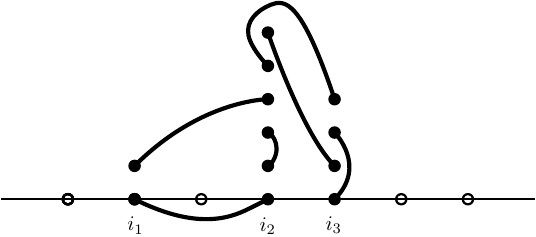}
  \captionof{figure}{\raggedright An example of a perfect matching \hbox{$G\in\mathcal{G}_{\bm{\xi}}$} with \hbox{ $P(G)=p_{i_1i_2}^2p_{i_2i_2}p_{i_2i_3}^2p_{i_3i_3}$}}
\end{subfigure}
\end{figure}

The previous quantity follows the same dynamics \eqref{discs} as $f_{\bm{\lambda},s}(\bm{\xi}).$

\begin{theorem}[\cite{bourgade2018random}*{Theorem 2.6}] Suppose that $\mathbf{u}$ is the solution of the symmetric Dyson vector flow \eqref{dysonvect} and $F_{\bm{\lambda},s}(\bm{\xi})$ is given by \eqref{lem:eq:perfobsdeformed}.  Then it satisfies the equation
\begin{equation}\label{eq:discs2deformed}
\partial_s F_{\bm{\lambda},s}(\bm{\xi})=\frac{1}{N}\sum_{i\neq j}\frac{2\xi_i(1+2\xi_j)\left(F_{\bm{\lambda},s}(\bm{\xi}^{i,j})-F_{\bm{\lambda},s}(\bm{\xi})\right)}{(\lambda_i-\lambda_j)^2}.
\end{equation}
\end{theorem}

\paragraph{Third step: invariance of local statistics.} The third and last step will be to obtain the result for the matrix $W_t$ without any Gaussian part. We will do so using a variant of the dynamical method introduced in \cite{bourgade2017eigenvector}*{Appendix A} which will show the continuity of resolvent statistics along the trajectory. This method was also used in \cite{huang2015bulk} to study sparse matrices. We will see that we need to take $\tau$ of order larger than $N^{-1}$ but smaller than $\sqrt{t/N}$ to use the continuity argument. In order to remove the Gaussian component for Theorem \ref{theo:resultQUE}, we will use the reverse heat flow and will need smoothness of the entries of our original matrix. Note that a moment matching scheme between the two matrix ensembles, which holds for any time $N^{-1}\ll \tau\ll 1$, could be used to obtain the invariance of local statistics. However it can be of later interest to obtain the continuity estimate up to time $\sqrt{t/N}$.

\paragraph{}The next section will state the local laws proved in different papers (\cite{landon2017convergence} and \cite{bourgade2017huang}). The third section is dedicated to prove Theorem \ref{theo:resultGaus}, Corollary \ref{QUE} and Theorem \ref{theo:resultQUE} for a short time relaxation of the matrix $W_t$. We use a maximum principle on $f_s$ or $F_s$, a basic tool for the analysis of parabolic equations. However, since we want a local result, remember that the variance depends on the position of the spectrum, we need to localize the maximum principle. We finish with an induction on the number of particles in the multi-particle random walk or a mutli-scale argument. For the third step, we will need a continuity result for the Dyson Brownian motion which will be shown in Subsection \ref{sec:cont} and we will give the reverse heat flow technique in Subsection \ref{sec:reverse}. We will then conclude by combining the three steps in Section \ref{sec:proof}.
\paragraph{Acknowledgments.} The author would like to kindly thank his advisor Paul Bourgade for many insightful and helpful discussions about this work and anonymous referees for many helpful comments on this article.

\section{Local laws}\label{sec:local}
In this section, we focus on the different local laws result for $W_t$. These local laws are high probability bounds, for simplicity we will now introduce the following notation for stochastic domination. For
\begin{equation*}
X=\left(X_N(u), N\in\mathbb{N}, u\in U_N\right),\quad
Y=\left(Y_N(u), N\in\mathbb{N}, u\in U_N\right).
\end{equation*}
two families of nonnegative random variables depending on $N$ (note that $U_N$ can also depend on $N$), we will say that $X$ is {stochastically dominated} by $Y$ uniformly in $\omega$, and write $X\prec Y$, if for all $\tau>0$ and $D>0$ we have
\begin{equation*}
\sup_{u\in U_N}\mathds{P}\left(X_N(u)>N^\tau Y_N(u)\right)\leqslant N^{-D}
\end{equation*}
for $N$ large enough. If we have $\vert X\vert\prec Y$ for some family $X$, we will write $X=\mathcal{O}_\prec\left(Y\right)$.

\paragraph{}Define now the resolvent of $W_t$ and its normalized trace, 
\begin{equation}
G(z)=(W_t-z)^{-1}=\sum_{k=1}^N\frac{\vert u_k\rangle\langle u_k\vert}{\lambda_k-z}
,\quad 
\varsigma_t(z)=\frac{1}{N}\mathrm{Tr}\, G(z)=\frac{1}{N}\sum_{k=1}^N\frac{1}{\lambda_k-z}
\end{equation}
and denote $G_{ij}(z)$ the $(i,j)$ entry of the resolvent matrix. In the rest of the section we will omit the dependence in $t$ of the resolvent since we are not looking at its dynamics.
\subsection{Anisotropic local law for deformed Wigner matrices}
\indent An averaged local law was proved in \cite{landon2017convergence}. The proof relies on Schur's complement formula, large deviations bounds and interlacing formula in order to first state a weak local law on the resolvent entries and the Stieltjes transform. The result then follows from a fluctuation averaging lemma in order to go from the scale $(N\eta)^{-1/2}$ to $(N\eta)^{-1}$. We first give the definition of the limiting Stieltjes transform as the solution $m_t(z)$ such that $\mathrm{Im}\, m_t(z)>0$ on the upper half plane of the following equation
\begin{equation}\label{eq:stieljes}
m_t(z)=\frac{1}{N}\sum_{\alpha=1}^N\frac{1}{D_\alpha-z-tm_t(z)}=\frac{1}{N}\sum_{\alpha=1}^N g_\alpha(t,z)
\end{equation}
where we defined
$$g_\alpha(t,z):=\frac{1}{D_\alpha-t-m_t(z)}$$

We will also need the following lemma on the Stieltjes transform claiming that its imaginary part is of order one.
\begin{lemma}[\cite{landon2017convergence}*{Lemma 7.2}]\label{lem:stieljes}
Let $\vartheta, \omega>0$  small constants and $\kappa\in(0,1)$. Take $z\in\mathcal{D}_{r}^{\vartheta,\kappa}$, for $N$ large enough, the following bounds holds for $t\in\mathcal{T}_\omega$,
$$c\leqslant\mathrm{Im}\,m_t(z)\leqslant C.$$
Moreover
$$ct\leqslant\vert D_\alpha-z-tm_t(z)\vert\leqslant C.$$
Note that the constants above do not depend on any parameter.
\end{lemma}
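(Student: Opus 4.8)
The plan is to establish both bounds by analyzing the self-consistent equation \eqref{eq:stieljes} together with the $(\eta_\star,r)$-regularity of $D$ at $E_0$ from Definition \ref{def:init}. The starting point is the observation that on the domain $\mathcal{D}_r^{\vartheta,\kappa}$ the free-convolution Stieltjes transform $m_t$ should be close, for the purposes of imaginary-part estimates, to $m_D$ evaluated at a shifted spectral parameter $z+tm_t(z)$: indeed, writing $w=z+tm_t(z)$, equation \eqref{eq:stieljes} becomes $m_t(z)=m_D(w)$. Since $\mathrm{Im}\,w=\eta+t\,\mathrm{Im}\,m_t(z)\geqslant \eta\geqslant N^{\vartheta-1}$, and (as we will check) $\mathrm{Im}\,w$ stays bounded above by something of order $1$ while $\mathrm{Re}\,w$ stays in $[E_0-r,E_0+r]$, the regularity hypothesis $c_D\leqslant \mathrm{Im}\,m_D(w)\leqslant C_D$ applies directly and yields $c_D\leqslant \mathrm{Im}\,m_t(z)\leqslant C_D$. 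So the real content is a self-consistent, bootstrap-type argument showing that $w=z+tm_t(z)$ indeed lands in the region where $D$-regularity is available.

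The steps I would carry out, in order: First, record a priori bounds on $m_t$ valid for all $z$ in the upper half-plane — $\mathrm{Im}\,m_t(z)>0$, and $|m_t(z)|\leqslant C$ away from the (compactly supported) spectrum, plus the crude bound $\mathrm{Im}\,m_t(z)\leqslant \mathrm{Im}\,m_t(\mathrm{Re}\,z+\mathrm{i}\,\mathrm{Im}\,z')$ monotonicity in $\eta$; also the Lipschitz/derivative bound $|\partial_z m_t(z)|\leqslant 1/t$ noted in Remark \ref{rk:eta}, obtained by differentiating \eqref{eq:stieljes}. Second, run a continuity argument in $\eta$: at $\eta$ of order $1$ (say $\eta=10$), $w=z+tm_t(z)$ has real part within $O(t)\ll r$ of $E_0\pm(1-\kappa)r$, hence in $[E_0-r,E_0+r]$, and imaginary part in $[\,1,\,C\,]$, so $D$-regularity gives $\mathrm{Im}\,m_t\asymp 1$ there; then decrease $\eta$ down to $N^{\vartheta-1}$, at each stage using the bound $\mathrm{Im}\,m_t\leqslant C_D$ just obtained to control $\mathrm{Im}\,w=\eta+t\,\mathrm{Im}\,m_t\leqslant \eta+tC_D$, which is $\leqslant 10$ since $\eta\leqslant N^{-\vartheta}t\ll 1$, while also $\mathrm{Im}\,w\geqslant \ell$ will need checking — this is where one uses that $t\,\mathrm{Im}\,m_t$ cannot collapse, i.e. the lower bound feeds back. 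Combining the upper bound on $\mathrm{Im}\,w$ with $\mathrm{Re}\,w\in[E_0-r,E_0+r]$ (which holds because $|t\,\mathrm{Re}\,m_t|=O(t)=o(r)$) puts $w$ in the regularity window, closing the loop and giving $c_D\leqslant \mathrm{Im}\,m_t(z)\leqslant C_D$ throughout $\mathcal{D}_r^{\vartheta,\kappa}$. Third, for the second pair of bounds, recall $g_i(t,z)=(D_i-w)^{-1}$; the upper bound $|g_i|\leqslant C$ is immediate from $\mathrm{Im}\,w\geqslant \eta\gg 0$ once we know $\mathrm{Im}\,w$ is bounded below, and the key lower bound $|D_i-w|\geqslant ct$ follows from $\mathrm{Im}\,w=\eta+t\,\mathrm{Im}\,m_t(z)\geqslant t\,c_D$, so that $|D_i-w|\geqslant \mathrm{Im}\,w\geqslant c_D\,t$.

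The main obstacle is the bootstrap itself: a priori one only knows $\mathrm{Im}\,m_t>0$, and to invoke the regularity of $D$ one must already know that $\mathrm{Im}\,w=\eta+t\,\mathrm{Im}\,m_t(z)$ does not escape the interval $[\ell,10]$ on which $\mathrm{Im}\,m_D$ is controlled — but $\mathrm{Im}\,m_t$ is exactly the quantity we are trying to bound. Resolving this requires either (i) a continuity argument starting from large $\eta$ where everything is under control and a quantitative statement that the set of $\eta$ for which the bounds hold is open and closed, or (ii) a fixed-point/stability analysis of \eqref{eq:stieljes} showing the self-consistent equation has a unique solution in the relevant region with the claimed size. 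I expect the cleanest route is (i), tracking that the only way $\mathrm{Im}\,w$ could leave $[\ell,10]$ is by crossing an endpoint, and ruling this out using the strict inequalities $c_D<\mathrm{Im}\,m_D<C_D$ together with $\eta+tC_D<10$ and $\eta+t c_D>\ell$ (the latter forcing a compatible choice of the lower endpoint $\ell$, presumably $\ell$ is taken of order $t\eta_\star$ or similar in Definition \ref{def:init} — one has to match the quantifiers there). Since this lemma is quoted from \cite{landon2015convergence}, I would present the above as the structure of the argument and refer to that paper for the routine continuity estimates.
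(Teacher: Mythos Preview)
The paper does not give its own proof of this lemma; it is stated with a citation to \cite{landon2015convergence} and used as a black box thereafter. Your bootstrap/continuity strategy --- rewriting \eqref{eq:stieljes} as $m_t(z)=m_D(w)$ with $w=z+tm_t(z)$, initializing at $\eta\asymp 1$ where everything is trivially controlled, and propagating the bounds down to $\eta=N^{\vartheta-1}$ using the $(\eta_\star,r)$-regularity hypothesis to keep $w$ inside the window where $c_D\leqslant\mathrm{Im}\,m_D\leqslant C_D$ --- is the standard route and is essentially what one finds in the cited reference, so the outline is sound.

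One point to clean up in your third step: you write ``the upper bound $|g_i|\leqslant C$'', but the lemma bounds $|D_i-w|$, not $|g_i|$. In terms of $g_i$ the statement is $1/C\leqslant |g_i|\leqslant 1/(ct)$. Your argument from $\mathrm{Im}\,w\geqslant t\,\mathrm{Im}\,m_t(z)\geqslant c_D t$ correctly yields the lower bound $|D_i-w|\geqslant c_D t$, i.e.\ the upper bound $|g_i|\leqslant (c_D t)^{-1}$. The remaining inequality $|D_i-w|\leqslant C$ is the trivial one and does not follow from a lower bound on $\mathrm{Im}\,w$; it just needs $|D_i|$, $|z|$ and $t|m_t(z)|$ to each be $O(1)$, which uses the implicit assumption that the $D_i$ lie in a fixed compact set together with a crude bound such as $|m_t(z)|=|m_D(w)|=O(\log N)$ and $t\ll 1$.
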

 
Here is the averaged local law taken from \cite{landon2017convergence}.
\begin{theorem}[ \cite{landon2017convergence}*{Theorem 3.3}]\label{local}
Let $W_t$ be as in Definition \ref{def:wigner}, $\vartheta>0$ and  $\kappa\in(0,1)$,
\begin{equation}
\vert \varsigma_t(z)-m_t(z)\vert\prec\frac{1}{N\eta}
\end{equation} 
 uniformly in $z=E+i\eta\in \mathcal{D}_{r}^{\vartheta,\kappa}$
\end{theorem} 

The proof of Theorem \ref{local} also gives the following entrywise local law from \cite{landon2017convergence} also properly stated in \cite{bourgade2017huang}.
\begin{theorem}[\cite{bourgade2017huang}*{Theorem 2.4}]\label{entrylocal}
Let $W_t$ be as in Definition \ref{def:wigner} and $\vartheta>0$, $\kappa\in(0,1)$. Uniformly in $z=E+i\eta\in \mathcal{D}_{r}^{\vartheta,\kappa},$ we have for the diagonal entries 
\begin{equation}
\left\vert G_{\alpha\alpha}(z)-g_\alpha(t,z)\right\vert\prec \frac{t}{\sqrt{N\eta}}\left\vert g_\alpha(t,z)\right\vert^2,
\end{equation}
and for the off-diagonal entries
\begin{equation}
\left\vert G_{\alpha\beta}(z)\right\vert\prec \frac{1}{\sqrt{N\eta}}\min\{\vert g_\alpha(t,z)\vert,\vert g_\beta(t,z)\vert\}.
\end{equation}
\end{theorem}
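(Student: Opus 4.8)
The plan is to derive the entrywise law for $W_t$ by transferring known results for Wigner matrices via the stability of the self-consistent equation, following the strategy of Landon--Yau and its reformulation in \cite{bourgade2016eigenvector}. The starting point is the Schur complement identity for the diagonal resolvent entries: writing $W_t = D + \sqrt{t}W$, one has
\[
\frac{1}{G_{ii}(z)} = D_i - z - \sqrt{t}W_{ii} - t\sum_{k,l\neq i}W_{ik}G^{(i)}_{kl}(z)W_{li},
\]
where $G^{(i)}$ is the resolvent of the matrix with the $i$-th row and column removed. The key is to show the self-energy term concentrates: the diagonal part $\frac{t}{N}\sum_{k\neq i}G^{(i)}_{kk}$ should be close to $t\,m_t(z)$, while the fluctuating part (off-diagonal in $k,l$ and the deviation of the diagonal sum) is controlled by the large deviation bounds for quadratic forms of Definition \ref{def:wignernonsmooth}(iii). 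This produces a \emph{weak} self-consistent equation $G_{ii}^{-1} = D_i - z - t m_t(z) + \mathcal{O}_\prec\big((N\eta)^{-1/2}\big)$ first, i.e.\ the error with the square-root rate.

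Next I would run the standard stability/bootstrap analysis. The self-consistent equation $m_t(z) = \frac1N\sum_k g_k(t,z)$ with $g_k = (D_k - z - t m_t)^{-1}$ is stable as long as $t\,\mathrm{Im}\,m_t$ stays bounded below and $|g_k|$ stays bounded above --- both guaranteed on $\mathcal{D}_{r}^{\vartheta,\kappa}$ by Lemma \ref{lem:stieljes}. This turns the weak entrywise bound into the weak averaged bound $|s_t(z) - m_t(z)| \prec (N\eta)^{-1/2}$. Then the fluctuation averaging lemma --- the crucial step where independence of the $W_{ij}$ is exploited to gain an extra $(N\eta)^{-1/2}$ when summing the fluctuations $G_{ii} - \mathbb{E}_i[G_{ii}]$ over $i$ --- upgrades this to the optimal averaged law $|s_t(z) - m_t(z)| \prec (N\eta)^{-1}$ of Theorem \ref{local}. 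Feeding this improved control of $m_t$ (equivalently of the self-energy) back into the Schur expansion sharpens the diagonal entrywise error: the deterministic part of the self-energy is now accurate to order $t/(N\eta)$, and the leftover fluctuation is $\mathcal{O}_\prec\big(t(N\eta)^{-1/2}|g_i|^2\big)$ because each factor of $G^{(i)}$ in the quadratic form carries a $|g_i|$ and the large-deviation gain is $(N\eta)^{-1/2}$; this is exactly the stated bound $|G_{ii} - g_i| \prec \frac{t}{\sqrt{N\eta}}|g_i|^2$. For the off-diagonal entries one uses the identity $G_{ij} = -G_{ii}\,\sqrt{t}\,\big(W_{ij} + \sqrt{t}\sum_{k,l}W_{ik}G^{(ij)}_{kl}W_{lj}\big)\cdots$ (the two-resolvent-removed expansion), and the large deviation bound for the bilinear form gives the factor $(N\eta)^{-1/2}\min\{|g_i|,|g_j|\}$ after inserting the already-established diagonal estimates.

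The main obstacle is the fluctuation averaging step: proving that $\frac1N\sum_i (G_{ii} - g_i)$ is of size $(N\eta)^{-1}$ rather than the naive $(N\eta)^{-1/2}$. This requires a careful expansion of $G_{ii}$ in terms of minors and an inductive control of the moments $\mathbb{E}\big|\frac1N\sum_i (G_{ii}-g_i)\big|^{2p}$, tracking how many ``independent'' resolvent minors appear in each term of the expansion; in the deformed setting one must also check that the presence of the diagonal $D$ and the scaling $\sqrt{t}$ do not degrade the argument, which is where the regularity of $D$ at $E_0$ (Definition \ref{def:init}) and the restriction to $\eta \le N^{-\vartheta}t$ enter. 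Everything else is a routine adaptation of the Wigner case, and indeed the paper simply cites \cite{landon2015convergence, bourgade2016eigenvector} for this; I would therefore present the Schur-complement setup and the stability argument in enough detail to make the error bookkeeping transparent, then invoke the fluctuation averaging lemma from those references as a black box, noting that the moment assumption in Definition \ref{def:wignernonsmooth}(iii) is all that is needed to replace the Gaussian computations there.
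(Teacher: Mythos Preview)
The paper does not give its own proof of this theorem: it is simply cited from \cite{landon2015convergence, bourgade2016eigenvector}, as you yourself note at the end of your proposal. Your sketch is a faithful outline of the argument in those references --- Schur complement, large-deviation bounds for quadratic forms in the $W_{ij}$, weak law, fluctuation averaging to upgrade the averaged law, and then feeding the improved averaged law back into the Schur identity --- so there is nothing to compare against and no disagreement of method.

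One small correction to your bookkeeping: the factor $|g_i|^2$ in the diagonal bound does not arise because ``each factor of $G^{(i)}$ in the quadratic form carries a $|g_i|$''. The large-deviation estimate for the self-energy fluctuation gives an error of size $t(\mathrm{Im}\,s_t/(N\eta))^{1/2}\asymp t/\sqrt{N\eta}$ on $1/G_{ii}$; the $|g_i|^2$ then appears purely from the inversion
\[
G_{ii}-g_i \;=\; -\,G_{ii}\,g_i\Big(\tfrac{1}{G_{ii}}-\tfrac{1}{g_i}\Big)\;\approx\; -\,g_i^2\cdot\big(\text{self-energy error}\big).
\]
Also, in the Schur formula the term $\sqrt{t}\,W_{ii}$ enters with a $+$ sign (since $(W_t)_{ii}=D_i+\sqrt{t}W_{ii}$), though this is of course irrelevant to the estimates.
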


In order to study $\langle \mathbf{q},u_k\rangle$, we will need the following local law for $\langle\mathbf{q},G(z)\mathbf{q}\rangle$ proved in \cite{bourgade2017huang},
\begin{theorem}[\cite{bourgade2017huang}*{Theorem 2.1}]\label{theo:isolocal}
Let $\vartheta>0$, $\kappa\in(0,1)$ and $\mathbf{q}$ a $L^2$-normalized vector of $\mathbb{R}^N$, we have
 \begin{equation}\label{eq:anisolocal}
 \left\vert\langle\mathbf{q},G(z)\mathbf{q}\rangle-\sum_{\alpha=1}^N
 {q_\alpha^2g_\alpha(t,z)}\right\vert\prec\frac{1}{\sqrt{N\eta}}\mathrm{Im}\left(\sum_{\alpha=1}^N{q_\alpha^2 g_\alpha(t,z)}\right)
 \end{equation}
 uniformly in $z=E+i\eta\in \mathcal{D}_{r}^{\vartheta,\kappa}$
 \end{theorem}
 ~\\ This theorem also gives us control of the resolvent as a bilinear form by polarization. We will give the proof of this corollary for completeness. 
\begin{corollary}\label{coro:aniso}  Let $\vartheta>0$, $\kappa\in(0,1)$, let $\mathbf{v}$ and $\mathbf{w}$ two $L^2$-normalized vectors of $\mathbb{R}^N$, we have
\begin{equation}
\label{eq:anisoloca}\left\vert\langle\mathbf{v},G(z)\mathbf{w}\rangle-\sum_{\alpha=1}^Nv_\alpha w_\alpha g_\alpha(t,z)\right\vert\prec\frac{1}{\sqrt{N\eta}}\sqrt{\mathrm{Im}\left(\sum_{\alpha=1}^N{v_\alpha^2g_\alpha(t,z)} \right)\mathrm{Im}\left(\sum_{\alpha=1}^N{w_\alpha^2g_\alpha(t,z)} \right)}
\end{equation}
uniformly in $z=E+i\eta\in \mathcal{D}_{r}^{\vartheta,\kappa}$
 \end{corollary} 
\begin{proof}Let $\mu\in\mathbb{R}$,  a parameter fixed later. Consider
\begin{equation}\label{eq:polari}\langle\left(\mathbf{v}+\mu
\mathbf{w}\right),G\left(\mathbf{v}+\mu\mathbf{w}\right)\rangle
=\langle \mathbf{v},G\mathbf{v}\rangle+\mu^2\langle\mathbf{w},G\mathbf{w}\rangle
+2\mu\langle \mathbf{v},G\mathbf{w}\rangle,
\end{equation}
by linearity and symmetry of the resolvent $G$. On one hand, using Theorem \ref{theo:isolocal} on the first two terms of the right hand side of \eqref{eq:polari}, we get the equation
\begin{multline}\label{eq:polari1}
\langle\left(\mathbf{v}+\mu
\mathbf{w}\right),G\left(\mathbf{v}+\mu\mathbf{w}\right)\rangle
=2\mu\langle\mathbf{v},G\mathbf{w}\rangle+\sum_{\alpha=1}^Nv_\alpha^2
g_\alpha(t,z)+\mu^2\sum_{\alpha=1}^Nw_\alpha^2g_\alpha(t,z)\\+
\mathcal{O}_\prec\left(\frac{1}{\sqrt{N\eta}}\left(\mathrm{Im}\left(\sum_{\alpha=1}^Nv_\alpha^2g_\alpha(t,z)
+\mu^2\sum_{\alpha=1}^Nw_\alpha^2g_\alpha(t,z)\right)\right)\right).
\end{multline}
On the other hand, using Theorem \ref{theo:isolocal} on the left hand side of \eqref{eq:polari}, we obtain
\begin{multline}\label{eq:polari2}
\langle\left(\mathbf{v}+\mu
\mathbf{w}\right),G\left(\mathbf{v}+\mu\mathbf{w}\right)\rangle=
\sum_{\alpha=1}^N(v_\alpha+\mu w_\alpha)^2g_\alpha(t,z)
\\+\mathcal{O}_\prec\left(\frac{1}{\sqrt{N\eta}}\mathrm{Im}\left(\sum_{\alpha=1}^N(v_\alpha+\mu w_\alpha)^2g_\alpha(t,z)\right)\right).
\end{multline}
Finally, combining \eqref{eq:polari1} and \eqref{eq:polari2} and choosing
$$\mu=\frac{\mathrm{Im}\left(\sum_{\alpha=1}^Nw_\alpha v_\alpha g_\alpha(t,z)\right)}{\mathrm{Im}\left(\sum_{\alpha=1}^Nw_\alpha^2g_\alpha(t,z)\right)},$$
we get the final result.
\end{proof} 
We will also need the following rigidity result from \cite{landon2017convergence}.
\begin{theorem}[\cite{landon2017convergence}*{Theorem 3.5}]\label{theo:rigidity}
	Let $\omega>0$ be a small constant and $\kappa\in(0,1)$. For any $t\in\mathcal{T}_\omega$, we have
	\[
	\vert
	\lambda_k-\gamma_{k,t}
	\vert
	\prec
	\frac{1}{N}
	\]
uniformly in $k\in\mathcal{A}^\kappa_r$
\end{theorem}
 This control of the resolvent allows us to give an upper bound for the moments of the eigenvectors of $W_t$. Defining, for a fixed $\mathbf{q}\in\mathbb{S}^{N-1}$,
\begin{equation}\label{eq:defphi}
\varphi_{t}(\bm{\xi})=\mathds{E}\left[\prod_{k=1}^N\frac{(\sqrt{N}\langle\mathbf{q},u_k\rangle)^{2\xi_k}}{a(2\eta_k)}\middle\vert\bm{\lambda}\right]
\end{equation}
  with $u_1,\dots,u_N$ the eigenvectors of $W_t$ and $\bm{\lambda}$ its eigenvalues, we have the following corollary. 
 
\begin{corollary}\label{coro:normephi}
Let $\kappa\in(0,1),$ $\vartheta>0$ and $\bm{\xi}:\mathcal{A}_{r}^\kappa\to\mathbb{N}$
\begin{equation}
\varphi_t(\bm{\xi})
\prec
\prod_{k=1}^N\sigma_t^{2\xi_k}(\mathbf{q},k,\eta)=:\sigma_t^2(\mathbf{q},\bm{\xi},\eta).
\end{equation}
uniformly in $N^{-1+\vartheta}\leqslant\eta\leqslant N^{-\vartheta}t.$
\end{corollary}

\begin{proof}
Fix any $\vartheta\in(0,1)$ such that $\eta=N^{-1+\vartheta}\ll t$ and $k\in\mathcal{A}_{r}^\kappa$, we have the following first high probability bound with $z_k=\lambda_k+\mathrm{i}\eta$
\begin{multline*}
\frac{1}{\eta}\left(\sqrt{N}\langle\mathbf{q},u_k\rangle\right)^2
=
\frac{\left(\sqrt{N}\langle\mathbf{q},u_k\rangle\right)^2\eta}{(\lambda_k-\mathrm{Re}(z_k))^2+\eta^2}
\leqslant 
N\mathrm{Im}\langle\mathbf{q},G(z_k)\mathbf{q}\rangle\\
= N\mathrm{Im}\left(\sum_{\alpha=1}^N\frac{q_\alpha^2}{D_\alpha-z_k-tm_t(z_k)}\right)+\mathcal{O}_\prec\left(\frac{1}{\sqrt{N\eta}}\mathrm{Im}\left(\sum_{\alpha=1}^N\frac{q_\alpha^2}{D_\alpha-z_k-tm_t(z_k)}\right)\right).
\end{multline*}
We can then write,
$$\left(\sqrt{N}\langle\mathbf{q},u_k\rangle\right)^2\prec {N\eta}\mathrm{Im}\left(\sum_{\alpha=1}^N\frac{q_\alpha^2}{D_\alpha-z_k-tm_t(z_k)}\right)\prec\sigma_t^2(\mathbf{q},k,\eta).$$
where we used the definition of $\prec$ and that $\vartheta$ is as small as we want and the smoothness of $m_t(z)$.
We finish the proof by definition of $\varphi_{\bm{\lambda},t}$.
\end{proof}
%
\paragraph{}The local law gives us a strong control on both the eigenvalues and eigenvectors of our matrix ensemble. As $W_t$ will undergo the Dyson Brownian motion, these quantities will still be controlled through a local law up to a small error coming from the time of the relaxation.
We will now define the event of good eigenvalue paths ($\bm{\lambda}(t+s))_{s\in(0,\tau)}$) in the sense that all the estimates and bound from the previous section such as local laws from Theorems \ref{local}, \ref{theo:isolocal} and \ref{theo:rigidity} hold. First denote the resolvent of $H_s$ and its normalized trace by
\begin{equation}\label{eq:defstielt}
G(s,z)=(H_s-z)^{-1}\quad\text{and}\quad \varsigma(s,z)=\frac{1}{N}\mathrm{Tr}\,H_s=\frac{1}{N}\sum_{k=1}^N\frac{1}{\lambda_k(t+s)-z}.
\end{equation}
\begin{definition}\label{def:goodpath}
Let $\varepsilon,\, \vartheta>0$ and $\kappa\in(0,1)$. An eigenvalue configuration $\bm{\lambda}$ is \emph{good} is the following holds with overwhelming probability conditioning on $\bm{\lambda}(\tau_0)=\bm{\lambda}$ for $N$ large enough,
\begin{enumerate}
\item $\sup_{0\leqslant s\leqslant \tau} \vert \varsigma(s,z)-m_{t+s}(z)\vert \leqslant N^\varepsilon(N\eta)^{-1}$ uniformly in $z\in\mathcal{D}_r^{\vartheta,\kappa}$.
\item $\sup_{0\leqslant s\leqslant \tau} \vert\langle \mathbf{q}, G(s,z)\mathbf{q}\rangle - \sum_{\alpha=1}^N q_\alpha^2g_\alpha(t+s,z)\vert\leqslant N^{2\varepsilon}(N\eta)^{-1/2}\mathrm{Im}\sum_{\alpha=1}^Nq_\alpha^2g_\alpha(t+s,z)$ uniformly in $z\in\mathcal{D}_r^{\vartheta,\kappa}$.
\item $\sup_{0\leqslant s\leqslant \tau} \vert \lambda_i(t+s)-\gamma_{i,t+s}\vert\leqslant  N^\varepsilon N^{-1}$ uniformly in $i\in\mathcal{A}_r^\kappa$
\end{enumerate}
\end{definition}
Note that by the considerations in this section and a continuity argument, we see that \emph{good} eigenvalue paths occur with overwhelming probability so that we can condition on having such a path in the following section. Note that we will make $W_t$ undergo the Dyson Brownian motion for a small time $\tau$ so that the classical location $\gamma_{i,t}$ and the deterministic counterpart to resolvent entries $g_\alpha$ have small variations. This is the statement of the next lemma.
\begin{lemma}
Let $\varepsilon,\, \vartheta>0$ and $\kappa\in(0,1)$. Conditionally on a \emph{good} eigenvalue path as in Definition \ref{def:goodpath}, we have the following control of the resolvent,
\[
\begin{gathered}
\sup_{0\leqslant s\leqslant \tau} \left\vert \varsigma(s,z)-m_t(z)\right\vert\leqslant \frac{N^\varepsilon}{N\eta}+
\frac{\tau}{t},
\\
\sup_{0\leqslant s\leqslant \tau} \left\vert G(s,z)_{\alpha\alpha}-\frac{1}{D_\alpha-z-tm_t(z)}\right\vert
\leqslant \left(\frac{N^{2\varepsilon}}{\sqrt{N\eta}}+\frac{\tau}{t}\right)\vert g_\alpha(t,z)\vert
\end{gathered}
\]
uniformly in $z\in\mathcal{D}_r^{\vartheta,\kappa}.$ For the rigidity results for eigenvalues we have
\[
\sup_{0\leqslant s\leqslant \tau}
\left\vert \lambda_k(t+s)-\gamma_{k,t}\right\vert\leqslant N^\varepsilon\left(\frac{1}{N}+\tau\right)
\]
uniformly in $k\in \mathcal{A}_r^\kappa$.
\end{lemma}
\begin{proof}
The first result comes from the fact that $\vert \partial_t m_t(z)\vert \leqslant N^\varepsilon/t$ which can be deduced from the time evolution of $m_t$ from \cite{landon2017convergence}*{Lemma 7.6}
\[
\partial_tm_t(z)=\partial_z\left(m_t(z)(m_t(z)+z)\right)
\]
combined with the estimates $\vert \partial_z m_t(z)\vert\leqslant C/t$ and $\vert m_t(z)\vert\leqslant \log N.$ The other error term comes from the local law holding for a \emph{good} eigenvalue path.
The proof of the second bound comes from the following simple identity
\[
\partial_tg_\alpha(t,z)=\frac{m_t(z)+t\partial_tm_t(z)}{(D_i-z-tm_t(z))^2}\quad\text{which gives}\quad
\vert \partial_tg_\alpha(t,z)\vert \leqslant \frac{N^\varepsilon}{t}\vert g_\alpha(t,z)\vert.
\]
using the fact that $\vert m_t(z)\vert\leqslant C\log N$ and $\vert\partial_tm_t(z)\vert\leqslant N^\varepsilon/t.$ For the rigidity estimate, we combine the estimate $\vert\partial_t\gamma_{i,t}\vert\leqslant C\log N$ which can be found in \cite{landon2017convergence}*{Lemma 7.6} as well with the rigidity coming from the \emph{good} eigenvalue path.
\[
\]
\end{proof}

\section{Short time relaxation}\label{localmaxprinc}
In this section, we are going to prove Theorems \ref{theo:resultGaus} and \ref{theo:resultQUE} for the Dyson Brownian motion starting from $W_t$ using maximum principle. Note that in this section we will omit the subscript $\bm{\lambda}$ for simplicity.

\paragraph{}Recall the dynamics of the eigenvector moment flow with $n$ particles for $H_\tau$ with $H_0=W_t$.
\begin{equation}\label{eq:defdyna}
\left\{
	\begin{array}{ll}
		\displaystyle{\partial_\tau f_{\tau}(\bm{\xi})=\frac{1}{N}\sum_{i\neq j}\frac{2\eta_i(1+2\eta_j)\left(f_\tau(\bm{\xi}^{i,j})-f_\tau						(\bm{\xi})\right)}{(\lambda_i(t+\tau)-\lambda_j(t+\tau))^2}=:(\mathscr{B}_\tau f_\tau)(\bm{\xi})},\\
		f_0(\bm{\xi})=\varphi_t(\bm{\xi})
	\end{array}
\right.
\end{equation}  
with $\varphi_t(\bm{\xi})$ is defined in \eqref{eq:defphi}. where we noted $\lambda_i(t+\tau)$ the eigenvalues of $H_\tau$. Note that in the case of a single particle in $k$, we can write the dynamics
\begin{equation}\label{one}
\left\{
	\begin{array}{ll}
	\partial_\tau f_{\tau}(k)&=\displaystyle{\frac{2}{N}\sum_{j=1}^N\frac{f_{\tau}(j)-f_{\tau}(k)}{(\lambda_j(t+\tau)-\lambda_k(t+\tau))^2}},\\
	f_0(k)&=\varphi_t(k).
	\end{array}
	\right.
\end{equation}

We cut the dynamics into two parts : the short range where most of the information will be and the long range. This decomposition in this context was first introduced in \cite{erdos2015gap}.  Letting $1\ll \ell\ll N$ be a parameter that we will choose later, we then define
\begin{align}
\label{eq:defshort}&(\mathscr{S}(\tau) f_\tau)(\bm{\xi})=\displaystyle{\frac{1}{N}\sum_{\vert j-k\vert\leqslant \ell}^N\frac{2\eta_i(1+2\eta_j)\left(f_\tau(\bm{\xi}^{i,j})-f		(\bm{\xi})\right)}{(\lambda_i-\lambda_j)^2}},\\
\label{eq:deflong}&(\mathscr{L}(\tau) f_\tau)(k)=\displaystyle{\frac{1}{N}\sum_{\vert j-k\vert> \ell}^N\frac{2\eta_i(1+2\eta_j)\left(f_\tau(\bm{\xi}^{i,j})-f_\tau						(\bm{\xi})\right)}{(\lambda_i-\lambda_j)^2}}.
\end{align}
Denote by $U_\mathscr{S}(s,\tau)$ the semigroup associated with $\mathscr{S}$ from time $s$ to $\tau$ :
\begin{equation}
\partial_\tau U_\mathscr{S}(s,\tau)=\mathscr{S}(\tau) U_\mathscr{S}(s,\tau)
\end{equation} 
for any $s\leqslant \tau$. We will denote in the same way $U_\mathscr{B}$. 
It has been proved in \cites{bourgade2017eigenvector, bourgade2017huang} that the parabolic short range dynamics has a finite speed of propagation in the following sense: define the following distance on the set of configurations with $n$ particles
\begin{equation}\label{eq:distance}
d(\bm{\eta},\bm{\xi})=\sum_{k=1}^n\vert x_k-y_k\vert
\end{equation}
where $(x_1,\dots, x_n)$ are the positions of the particles in nondecreasing order of $\bm{\eta}$ and $y_\alpha$ of $\bm{\xi}$. The following lemma then states that if two configurations are far from each other, the short-range dynamics started at one and evaluated at the other is exponentially small with high probability.

\begin{lemma}[\cite{bourgade2017huang}*{Corollary 3.3}]\label{finitespeed}
Choose $\ell\geqslant N\tau$, let $\varepsilon>0$ be a small constant and $\kappa\in(0,1)$. Conditioning on a good eigenvalue path $(\bm{\lambda}(s))_s,$  uniformly, for any function $h$ on configurations of $n$ particles and a configuration $\bm{\xi}$ outside of the support of $h$ in the sense that $d(\bm{\xi},\bm{\eta})\geqslant N^\varepsilon \ell$ for any configuration $\bm{\eta}$ inside the support of $h$, we have
\begin{equation}
\sup_{0\leqslant s\leqslant s^\prime \leqslant t}U_\mathscr{S}(s,s^\prime)h(\bm{\xi})
\leqslant
N^n \Vert h\Vert_\infty e^{-cN^\varepsilon}
\end{equation}
for any $D>0$.
\end{lemma}

In this section, we condition on an event occurring with overwhelming probability so that we can state the results deterministically. We state it here as the following lemma.
\begin{lemma}
Let $\omega, \mathfrak{a}, \mathfrak{b}, \vartheta$ and $\varepsilon$ be small positive constants and $D$ as in Definition \ref{def:init}. Let $t\in\mathcal{T}_\omega,$ and $W_t$ as in \eqref{def:wigner}. Let $\kappa\in(0,1)$, $\tau\in[N^{-1+\mathfrak{a}}, N^{-\mathfrak{a}}t]$ and $\ell\in[\tau N^{1+\mathfrak{b}},N^{-\mathfrak{b}}t].$ The dynamics $(H_s)_{0\leqslant s\leqslant \tau}$ induces a measure on the space of eigenvalues and eigenvectors $(\bm{\lambda}(t+s),\mathbf{u}(t+s))_{0\leqslant s\leqslant \tau}$.  The event $A$ of trajectories defined by the following holds with overwhelming probability:
\begin{enumerate}
\item $\sup_{0\leqslant s\leqslant \tau} \vert \varsigma(s,z)-m_{t}(z)\vert \leqslant N^\varepsilon(N\eta)^{-1}+\tau t^{-1}$ uniformly in $z\in\mathcal{D}_r^{\vartheta,\kappa}$.
\item $\sup_{0\leqslant s\leqslant \tau} \vert\langle \mathbf{q}, G(s,z)\mathbf{q}\rangle - \sum_{\alpha=1}^N q_\alpha^2g_\alpha(t,z)\vert\leqslant (N^{2\varepsilon}(N\eta)^{-1/2}+\tau t^{-1})\mathrm{Im}\sum_{\alpha=1}^Nq_\alpha^2g_\alpha(t,z)$ uniformly in $z\in\mathcal{D}_r^{\vartheta,\kappa}$.
\item $\sup_{0\leqslant s\leqslant \tau} \vert \lambda_i(t+s)-\gamma_{i,t}\vert\leqslant  N^\varepsilon (N^{-1}+\tau)$ uniformly in $i\in\mathcal{A}_r^\kappa$.
\item For any function $h$ on configurations of $n$ particles and a configuration $\bm{\xi}$ supported outside of the support of $h$ in the sense that $d(\bm{\xi},\bm{\eta})\leqslant N^\varepsilon\ell$ for any configuration $\bm{\eta}$ inside the support of $h$ we have
\[
\sup_{0\leqslant s\leqslant s^\prime \leqslant t}U_\mathscr{S}(s,s^\prime)h(\bm{\xi})
\leqslant
N^n \Vert h\Vert_\infty e^{-cN^\varepsilon}
\]
\end{enumerate}
\end{lemma}

The following lemma gives us a bound on the difference between the short-range and long-range dynamics basically stating that most of the information lies in the short-range dynamics.

\begin{lemma}\label{shortrange}
Fix $\ell \ll Nt$ and consider $\bm{\xi_0}$ to be a configuration of $n$ particles supported on $\mathcal{A}_r^\kappa$ then for any eigenvalue paths $(\bm{\lambda}(t+s),\bm{u}(t+s))_{0\leqslant s\leqslant \tau}$ in $A$, we have for any $\vartheta>0$ and \linebreak$N^{-1+\vartheta}\leqslant \eta\leqslant N^{-\vartheta}\ell/N$,
\begin{equation}
\left\vert
	(U_\mathscr{B}(0,\tau)-U_\mathscr{S}(0,\tau))\varphi_t(\bm{\xi})
\right\vert
\leqslant
N^{(n+4)\varepsilon}\frac{N\tau}{\ell}\sigma_t^2(\mathbf{q},\bm{\xi},\eta).
\end{equation}
\end{lemma}
\begin{proof}
Let $\eta\in[N^{-1+\vartheta}, N^{-\vartheta}t]$. Using Duhamel's formula we can write
\[
\left\vert
	(U_\mathscr{B}(0,\tau)-U_\mathscr{S}(0,\tau))\varphi_t(\bm{\xi})
\right\vert
=
\left\vert
	\int_0^\tau
	U_\mathscr{S}(s,\tau)\mathscr{L}(s)f_s(\bm{\xi})\D s
\right\vert
\]
Now, by definition of the operator $\mathscr{L}(s)$ we have that 
\[
\mathscr{L}(s)f_s(\bm{\xi})
=
\sum_{j,k, \vert j-k\vert>\ell}
2\eta_j(1+2\eta_k)
\frac{f_s(\bm{\xi^{j,k}})-f_s(\bm{\xi})}{N(\lambda_j-\lambda_k)^2}
\]
so that we can bound, using Corollary \ref{coro:normephi} since $\bm{\xi}$ is supported on $\mathcal{A}_r^\kappa$,
\[
\left\vert \mathscr{L}(s)f_s(\bm{\xi})\right\vert
\leqslant2n(1+2n)
\sum_{j,k:\vert j-k\vert>\ell}
\frac{N^{n\varepsilon}\sigma_t^2(\mathbf{q},\bm{\xi},\eta)+\vert f_s(\bm{\xi^{k,j}})\vert}{N(\lambda_j-\lambda_k)^2}.
\]
Now, say the configuration is supported on $p$ sites denoted $(k_1,\dots,k_p)$, then one can write
\begin{equation}\label{eq:longrange}
\left\vert \mathscr{L}(s)f_s(\bm{\xi})\right\vert
\leqslant
C_n\sum_{i=1}^p
\sum_{j:\vert j-k_i\vert >\ell}
\frac{N^{n\varepsilon}\sigma_t^2(\mathbf{q},\bm{\xi},\eta)+\vert f_s(\bm{\xi^{k_i,j}})\vert}{N(\lambda_j-\lambda_{k_i})^2}.
\end{equation}
Now, since $\bm{\xi}$ is supported on $\mathcal{A}_r^\kappa$, we have that by Corollary \ref{coro:normephi} and denoting $\bm{\xi}\setminus k_i$ the configuration $\bm{\xi}$ where we removed a particule from the site $k_i$,
\begin{equation}\label{eq:exceptj}
f_s(\bm{\xi^{k_i,j}})
\leqslant
N^{(n-1)\varepsilon}
\sigma_t^2(\mathbf{q},\bm{\xi}\setminus k_i,\eta)f_s(j).
\end{equation}
Consider now $\eta_q = 2^q\ell/N$ for $q=[0, \lceil \log_2 (N/\ell)\rceil]$, then we can bound
\begin{multline}\label{eq:jsupell1}
\sum_{j:\vert j-k_i\vert>\ell}
\frac{f_s(j)}{N(\lambda_j-\lambda_{i_p})^2}
\leqslant
\sum_{j:\vert j-k_i\vert >\ell}
\hspace{-1em}
\sum_{q=0}^{\lceil \log_2 (N/\ell)\rceil}
\frac{1}{\eta_q}
\frac{\langle{\mathbf{q},\,{u_j}^2}\rangle\eta_q}{(\lambda_{k_i}-\lambda_j)^+\eta_q^2}\\
\leqslant
\hspace{-1em}
\sum_{q=0}^{\lceil \log_2 (N/\ell)\rceil}
\frac{N}{2^q\ell}
\mathrm{Im}
\langle\mathbf{q},\,G(s,\lambda_{k_i}+\I \eta_q)\mathbf{q}\rangle.
\end{multline}
We can now use the anisotropic local law since $\lambda_{k_i}$ lies in the spectral window and since we are on the event $A$,
\begin{multline}\label{eq:jsupell2}
\mathrm{Im} \langle \mathbf{q}, G(s,\lambda_{k_i}+\I \eta_q)\mathbf{q}\rangle
\leqslant
\mathrm{Im} \langle \mathbf{q}, G(s,\lambda_{k_i}+\I \eta)\mathbf{q}\rangle
\leqslant
N^\varepsilon\mathrm{Im}\,m_t(\lambda_{k_i}+\I\eta)\sigma_t(\mathbf{q},k_i,\eta)
\\\leqslant
N^{2\varepsilon}\sigma_t^2(\mathbf{q},k_i,\eta).
\end{multline}
where we used the fact that $\eta\ll \ell/N\leqslant \eta_q$ and that $m_t(z)$ is bounded in the spectral window. 
Combining the estimates \eqref{eq:jsupell1} and \eqref{eq:jsupell2} we obtain that 
\[
\sum_{j:\vert j-k_i\vert>\ell}
\frac{f_s(j)}{N(\lambda_j-\lambda_{k_i})^2}
\leqslant
N^{2\varepsilon}
\frac{N}{\ell}\sigma_t^2(\mathbf{q},k_i,\eta).
\]
Injecting this bound in \eqref{eq:longrange} with \eqref{eq:exceptj}, we obtain that 
\[
\left\vert
	\mathscr{L}(s)\varphi_t(\bm{\xi})
\right\vert
\leqslant
N^{(n+2)\varepsilon}
\frac{N}{\ell}
\sigma_t^2(\mathbf{q},\bm{\xi},\eta)
\]
where we used the fact that, by the same argument as in \eqref{eq:jsupell1},
\[
\sum_{\vert j-k_i\vert>\ell}\frac{1}{N(\lambda_j-\lambda_{k_i})^2}\lesssim \frac{N}{\ell}.
\]
Now, we can use that $U_\mathscr{S}$ is a contraction combined with the finite speed of propagation from Lemma \ref{finitespeed} so that we can write that 
\begin{equation}\label{eq:contractionshort}
\left\vert
	U_\mathscr{S}(s,\tau)\mathscr{L}(s)f_s(\bm{\xi})
\right\vert
\leqslant
N^\varepsilon\sup_{\bm{\eta}:d(\bm{\eta},\bm{\xi})\leqslant N^\varepsilon\ell}
\left\vert
	\mathscr{L}(s)f_s(\bm{\eta})
\right\vert
\leqslant
N^{(n+3)\varepsilon}\frac{N}{\ell}\sup_{\bm{\eta}:d(\bm{\eta},\bm{\xi})\leqslant N^\varepsilon\ell}\sigma_t^2(\mathbf{q},\bm{\eta},\eta).
\end{equation}
However, since we have that $\bm{\eta}$ is close to $\bm{\xi}$ we can use regularity of $\sigma_t^2(\mathbf{q},\bm{\eta},\eta)$. Indeed, if one looks at the function 
\[
\psi(x)
=
\sum_{\alpha=1}^N
\frac{q_\alpha^2t}{(D_\alpha-x-t\mathrm{Re}\,m_t(x))^2+(t\mathrm{Im}\,m_t(x))^2}
\]
Then we have that 
\[
\partial_x\psi(x)
=
\sum_{\alpha=1}^N
\frac{q_\alpha^2t
	(2(1+t\partial_x\mathrm{Re}\,m_t(x))
	(D_\alpha-x-t\mathrm{Re}\,m_t(x))
	-2t^2\partial_x\mathrm{Im}\,m_t(x)\mathrm{Im}\,m_t(x)
}{\left[(D_\alpha-x-t\mathrm{Re}\,m_t(x))^2+(t\mathrm{Im}\,m_t(x))^2\right]^2}
\]
So that we can obtain the bound using the fact that $\vert\partial_x m_t(x)\vert\leqslant N^\varepsilon/t$,
\[
\vert \partial_x\psi(x)\vert
\leqslant
\frac{N^\varepsilon}{t}
\psi(x).
\]
We can use this bound in order to obtain the following variation formula for $\sigma_t^2(\mathbf{q},k,\eta),$
\begin{equation}\label{eq:variationsigma}
\left\vert
	\sigma_t^2(\mathbf{q},\bm{\xi},\eta)-\sigma_t^2(\mathbf{q},\bm{\eta},\eta)
\right\vert
\leqslant
N^\varepsilon\frac{d(\bm{\xi},\bm{\eta})}{Nt} \sigma_t^2(\mathbf{q},\bm{\xi},\eta).
\end{equation}
In \eqref{eq:contractionshort}, one can see that the supremum is only taken over configurations close to each other, namely such that $d(\bm{\xi},\bm{\eta})\leqslant N^{\varepsilon} \ell$. Since we have $\ell\ll Nt$, by \eqref{eq:variationsigma}, $\sigma_t^2(\mathbf{q},\bm{\xi},\eta)$ varies slowly and we can finally bound
\[
\left\vert
	U_\mathscr{S}(s,\tau)\mathscr{L}(s)f_s(\bm{\xi})
\right\vert
\leqslant
N^{(n+4)\varepsilon}
\frac{N}{\ell}\sigma_t^2(\mathbf{q},\bm{\xi},\eta)
\]
\end{proof}

The two previous lemmas will be very useful tools to prove Theorem 1.4. Indeed, the finite speed of propagation in Lemma \ref{finitespeed} allows us to localize our problem but is a property of the short range dynamics, Lemma \ref{shortrange} then tells us that most of the information of the global dynamics is in this short range part.

\subsection{Analysis of the moment observable}\label{subsec:moment}
To prove Theorem \ref{theo:resultGaus} we will prove the following intermediary proposition,
\begin{proposition}\label{prop:moment}
Conditionally on $(\bm{\lambda},\bm{u})\in A$, let $\kappa\in(0,1)$, $\varepsilon>0$ and $n$ be an integer. If $\mathbf{q}\in\mathbb{S}^{N-1},$ for any $\bm{\xi}:\mathcal{A}^\kappa_{r}\to\mathbb{N}$ such that $\mathcal{N}(\bm{\xi})=n$, there exists a $p$ depending on $n$ such that we have
\begin{equation}\label{resultbulk}
f_{\tau}(\bm{\xi})=\sigma_t^2(\mathbf{q},\bm{\xi},N^{-\varepsilon}\tau)+\mathcal{O}\left(N^{p\varepsilon}\left(\frac{1}{\sqrt{N\tau}}+\left(\frac{\tau}{t}\right)^{1/3}\right)\sigma_t^2(\mathbf{q},\bm{\xi},N^{-\varepsilon}\tau)\right).
\end{equation}
where $\sigma_t(\mathbf{q},k,\tau)$ is given by \eqref{defsigma}.
\end{proposition}

The $1/3$ exponent that we give here in the error is not optimal. We are not able to reach an optimal error because of the strong dichotomy we do between the short range and the long range dynamics and also the localization technique where more parameters need to be tuned. Using a multi-scale partition of the dynamics could improve the error term. Note also the choice of the parameter $\eta$ corresponds to $N^{-\varepsilon}\tau$ which optimize our error term.
\paragraph{}Let $\varepsilon>0$ be a small constant. Recall that $t\in\mathcal{T}_\omega$ and $\tau\ll t$. First, the following lemma gives us a local law for $f_\tau$, in the case of a single particle, deduced from the isotropic local law for $W_t$ in Theorem \ref{theo:isolocal}.
\begin{lemma}\label{lem:localtau}
For $z\in\mathcal{D}_{r}^{\vartheta,\kappa}$, we have
\begin{multline*}
\mathrm{Im}\left(\frac{1}{N}\sum_{k=1}^N\frac{f_\tau(k)}{\lambda_k(t+\tau)-z}\right)=\mathrm{Im}\left(\sum_{\alpha=1}^N\frac{q_\alpha^2}{D_{\alpha}-z-tm_t(z)}\right)\\+\mathcal{O}\left(N^\varepsilon\left(\frac{1}{\sqrt{N\eta}}+\frac{\tau}{t}\right)\mathrm{Im}\left(\sum_{\alpha=1}^N\frac{q_\alpha^2}{D_\alpha-z-tm_t(z)}\right)\right).
\end{multline*}
\end{lemma}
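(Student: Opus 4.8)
The plan is to write the quantity $\frac{1}{N}\sum_k \frac{f_\tau(k)}{\lambda_k(t+\tau)-z}$ as a single-particle average of the eigenvector moment flow and relate it, via the resolvent identity, to the resolvent of $H_\tau$ traced against $\mathbf{q}$. Recall that $f_\tau(k)=\mathds{E}\big[(\sqrt N\langle\mathbf{q},u_k^{H_\tau}\rangle)^2\,\big|\,\bm{\lambda}\big]$. Hence, exactly as in the computation leading to \eqref{eq:moyenne}, one has
\begin{equation*}
\mathrm{Im}\left(\frac{1}{N}\sum_{k=1}^N\frac{f_\tau(k)}{\lambda_k(t+\tau)-z}\right)=\mathds{E}\left[\,\mathrm{Im}\big(\langle\mathbf{q},G(\tau,z)\mathbf{q}\rangle\big)\,\middle|\,\bm{\lambda}\right],
\end{equation*}
since $\langle\mathbf{q},G(\tau,z)\mathbf{q}\rangle=\sum_k (\sqrt N\langle\mathbf{q},u_k^{H_\tau}\rangle)^2/(N(\lambda_k(t+\tau)-z))$. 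So the statement reduces to an anisotropic local law for the resolvent of $H_\tau=W_t+\sqrt\tau G$, conditionally on (or rather, with error controlled uniformly along) the eigenvalue trajectory $\bm{\lambda}$.

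First I would invoke the conditional local law already recorded in \eqref{eq:locallawdyson}: for $z\in\mathcal{D}_r^{\vartheta,\kappa}$,
\begin{equation*}
\left\vert G(\tau,z)_{ii}-g_i(t,z)\right\vert\prec\frac{1}{\sqrt{N\eta}}+\frac{\tau}{t},\qquad g_i(t,z)=\frac{1}{D_i-z-tm_t(z)},
\end{equation*}
together with the corresponding off-diagonal bound $|G(\tau,z)_{ij}|\prec \frac{1}{\sqrt{N\eta}}+\frac{\tau}{t}$ (obtained the same way, since the stability analysis of \cite{landon2015convergence, bourgade2016eigenvector} compares $H_\tau$ to $W_{t}$ with a shift of size $\tau$ and $m_{t+\tau}-m_t=\mathcal{O}(\tau/t)$ by $|\partial_s m_s|\lesssim 1/t$; alternatively one feeds the perturbed eigenvalue rigidity of \eqref{eq:locallawdyson} into the argument of \cite{bourgade2016eigenvector}). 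Then I would expand $\langle\mathbf{q},G(\tau,z)\mathbf{q}\rangle=\sum_{i}q_i^2 G(\tau,z)_{ii}+\sum_{i\neq j}q_iq_j G(\tau,z)_{ij}$, bound the diagonal part by the entrywise law and the off-diagonal part via a large-deviation / fluctuation-averaging estimate exactly as in the proof of Theorem \ref{theo:isolocal} in \cite{bourgade2016eigenvector} — the point being that the self-improving structure of the anisotropic local law upgrades the naive $\sqrt{N}$-loss in $\sum_{i\ne j}$ to the claimed $\frac{1}{\sqrt{N\eta}}\,\mathrm{Im}(\sum_k q_k^2 g_k(t,z))$ bound, using $\mathrm{Im}\,\sum_k q_k^2 g_k(t,z)\asymp t\,\mathrm{Im}\,m_t(z)$ (comparable to $t$ by Lemma \ref{lem:stieljes}), so that each error term is of the form $\big(\frac{1}{\sqrt{N\eta}}+\frac{\tau}{t}\big)\cdot t$. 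Taking imaginary parts and then the conditional expectation (which only helps, by Jensen and the fact that the bound is deterministic in $\bm{\lambda}$ up to overwhelming-probability events) yields the claim.

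The main obstacle is bookkeeping the two error sources coherently: the $1/\sqrt{N\eta}$ term, which is genuinely the anisotropic-local-law fluctuation for $W_t$ and must be carried through the polarization/fluctuation-averaging step without degrading its power, and the $\tau/t$ term, which quantifies how far the DBM at time $\tau$ has drifted from $W_t$ and enters through both $m_{t+\tau}-m_t$ and the eigenvalue shift $\lambda_i(t+\tau)-\gamma_{i,t}$. One must check that replacing $g_i(t+\tau,z)$ by $g_i(t,z)$ in the self-consistent equation costs only $\mathcal{O}(\tau/t)\cdot|g_i|$ — this is where $|\partial_s m_s(z)|\le 1/t$ (Remark \ref{rk:eta}) and the lower bound $|D_i-z-tm_t(z)|\ge ct$ of Lemma \ref{lem:stieljes} are used — and that the conditional expectation over the Gaussian part of $H_\tau$ does not reintroduce any trajectory-dependent error beyond the stated one. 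Once these are in hand the rest is the standard anisotropic local law machinery applied verbatim.
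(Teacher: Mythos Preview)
Your opening step is exactly the paper's: rewrite the sum as $\mathds{E}\big[\mathrm{Im}\langle\mathbf{q},G^{H_\tau}(z)\mathbf{q}\rangle\,\big|\,\bm{\lambda}\big]$. From there, however, you take a detour the paper avoids. You propose to expand $\langle\mathbf{q},G(\tau,z)\mathbf{q}\rangle$ into diagonal and off-diagonal pieces, feed in the entrywise bounds of \eqref{eq:locallawdyson}, and then rerun the fluctuation-averaging machinery of \cite{bourgade2016eigenvector} to control $\sum_{i\neq j}q_iq_jG_{ij}$. The paper instead observes the distributional identity
\[
H_\tau=D+\sqrt{t}\,W+\sqrt{\tau}\,\mathrm{GOE}\ \overset{(d)}{=}\ D+\sqrt{t+\tau}\,W'
\]
for some Wigner matrix $W'$, which means Theorem~\ref{theo:isolocal} applies \emph{verbatim} to $H_\tau$ with parameter $t+\tau$ in place of $t$. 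This yields in one line
\[
\mathrm{Im}\langle\mathbf{q},G^{H_\tau}(z)\mathbf{q}\rangle=\mathrm{Im}\Big(\sum_k q_k^2 g_k(t+\tau,z)\Big)\Big(1+\mathcal{O}_\prec\big((N\eta)^{-1/2}\big)\Big),
\]
and a Taylor expansion $g_k(t+\tau,z)\to g_k(t,z)$ produces the $\tau/t$ error using $|\partial_t m_t|\lesssim (\log N)/t$ and $|D_k-z-tm_t(z)|\ge ct$. Your route would eventually arrive at the same place, but reproving the anisotropic law is unnecessary once you notice $H_\tau$ is itself a deformed Wigner matrix; it also carries the risk that if you literally condition on $\bm{\lambda}$ \emph{before} the fluctuation-averaging step, the independence of entries needed for that step is destroyed. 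As a side remark, your claim $\mathrm{Im}\sum_k q_k^2 g_k(t,z)\asymp t\,\mathrm{Im}\,m_t(z)$ is not correct for general $\mathbf{q}$ (it is only an upper bound of order $1/t$), though this does not affect the argument.
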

\begin{proof}
See first that, by definition of $f_\tau$, we have
$$\frac{1}{N}\sum_{k=1}^N\frac{f_\tau(k)}{\lambda_k(t+\tau)-z}=\mathds{E}\left[\langle\mathbf{q},G^{H_\tau}(z)\mathbf{q}\rangle\middle\vert\bm{\lambda}\right]$$
where $G^{H_\tau}:=(H_\tau-z)^{-1}$ is the resolvent of $H_\tau$. Now, the law of $H_\tau$ is $D+\sqrt{t}W +\sqrt{\tau}\mathrm{GOE}\overset{(d)}{=}D+\sqrt{t+\tau}W'$ for some $W'$ a Wigner matrix. We can use Theorem \ref{theo:isolocal} for this matrix and write
\begin{multline*}\mathrm{Im}\left(\langle\mathbf{q},G^{H_\tau}\mathbf{q}\rangle\right)=\mathrm{Im}
\left(\sum_{\alpha=1}^N\frac{q_\alpha^2}{D_\alpha-z-(t+\tau)m_{t+\tau}(z)}\right)\\+\mathcal{O}\left(N^\varepsilon\frac{1}{\sqrt{N\eta}}\mathrm{Im}
\left(\sum_{\alpha=1}^N\frac{q_\alpha^2}{D_\alpha-z-(t+\tau)m_{t+\tau}(z)}\right)\right)
\end{multline*}
with $m_{t+\tau}(z)$ the solution with positive imaginary part of the following self-consistent equation,
$$m_{t+\tau}(z)=\frac{1}{N}\sum_{\alpha=1}^N\frac{1}{D_\alpha-z-(t+\tau) m_{t+\tau}(z)}.$$
Note that we have, for $z\in\mathcal{D}_{r}^{\vartheta,\kappa}$,  $0<\mathrm{Im}(m_{t+\tau}(z))\leqslant C$ for some constant $C$ so that, by a Taylor expansion in $\tau\ll t$ (remember that $\eta\ll t$ for $z\in\mathcal{D}_{r}^{\vartheta,\kappa}),$ we obtain
\begin{multline*}
\mathrm{Im}
\left(\sum_{\alpha=1}^N\frac{q_\alpha^2}{D_\alpha-z-(t+\tau)m_{t+\tau}(z)}\right)=\mathrm{Im}
\left(\sum_{\alpha=1}^N\frac{q_\alpha^2}{D_\alpha-z-tm_{t+\tau}(z)}\right)\\
+\mathcal{O}\left(N^\varepsilon\frac{\tau}{t}\mathrm{Im}
\left(\sum_{\alpha=1}^N\frac{q_\alpha^2}{D_\alpha-z-tm_{t}(z)}\right)\right).
\end{multline*}

We get the final result with the bound $\vert \partial_tm_t(z)\vert\leqslant C(\log N)/t.$ 
\end{proof}
 Let {$\bm{\xi}_0\subset \mathcal{A}^\kappa_{r}$} be a fixed configuration, we want to use a maximum principle on a window centered around $\bm{\xi}_0$ of size $w$ according to the distance \eqref{eq:distance}. Since we make a small perturbation $\tau\ll t$, in order to notice the dynamics in this window, we need to have $w\gg N\tau$. Furthermore, we want to look in the part of the spectrum where the eigenvector will be of typical size $1/\sqrt{Nt}$, to localize the dynamics in this small part of the spectrum. We then need to take $w\ll Nt.$ 
 
We define the following flattening and averaging operator, for $a>0$
\begin{equation}
\left(\text{Flat}_{\bm{\xi}_0}^af\right)(\bm{\xi})=
\left\{
\begin{array}{ll}
f(\bm{\xi})\,\,&\text{if}\,\,d(\bm{\xi},\bm{\xi_w})\leqslant a,\\
\displaystyle{\sigma_t^2(\mathbf{q},\bm{\xi_0})}\,\,\,&\text{if}\,d(\bm{\xi},\bm{\xi_w})> a
\end{array}
\right.
\end{equation}
and
\begin{equation}
(\text{Av}_{\xi_0}f)(\bm{\xi})=\frac{2}{w}\int_{w/2}^w(\text{Flat}_{\xi_0}^af)(\bm{\xi})\mathrm{d}a.
\end{equation}
Notice that for every $\bm{\xi}$, there exists $a_{\bm{\xi}}\in[0,1]$ such that
\begin{equation}\label{notationak}
\text{Av}_{\xi_w}f(\bm{\xi})=a_{\bm{\xi}}f(\bm{\xi})+(1-a_{\bm{\xi}})\sigma_t^2(\mathbf{q},\bm{\xi_0}).
\end{equation}
\subsubsection{Proof of Proposition \ref{prop:moment} : Case of a single particle}
To show the result \eqref{resultbulk} by induction, we will first prove in the case of one particle with the dynamics \eqref{one}. We first prove under the hypotheses as in Proposition \ref{prop:moment} in the case where $n=1$,
 \begin{equation}\label{goal}
f_\tau(k)=\sigma_t^2(\mathbf{q},k,\eta)+\mathcal{O}\left(N^{\varepsilon}\left(\frac{1}{\sqrt{N\tau}}+\left(\frac{\tau}{t}\right)^{1/3}\right)\sigma_t^2(\mathbf{q},k,\eta)\right).
\end{equation}

To do so, we want to use a localized maximum principle centered around $k_w$ which is the position of the particle for the configuration $\bm{\xi_0}$ in this case. However, we need to know that the maximum stays in that window, that is why we first flatten and average $f_t(k)$ and use it as an initial condition for the dynamics \eqref{discs}. We will then make the short range dynamics work on $f_t(k)$ during a time $\tau\ll t$. Since we use the short range dynamics for a time $\tau$ we will be able to use the finite speed of propagation \eqref{finitespeed} and we should choose $Nt\gg\ell\geqslant N\tau$ for the range cut-off. We will take an explicit value at the end of the proof. The different parameters and scaling is illustrated in Figure \ref{fig:illusproof}.

\begin{figure}[!ht]
	\centering
	\includegraphics[scale=.8]{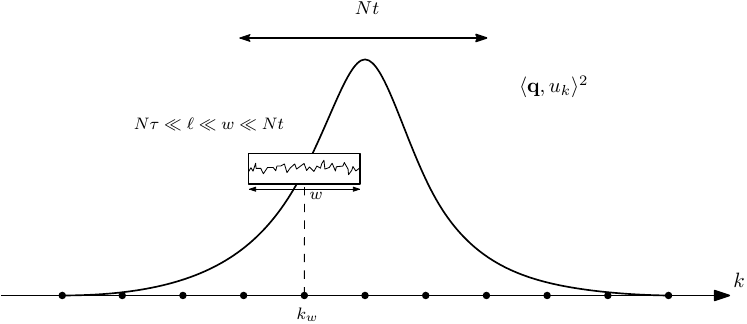}
	\caption{We see here a sketch of the variance profile plotted in the spectral dimension: the projection $\mathbf{q}$ is fixed and we plot the profile as a function of the eigenvector $u_k$. We then localize the dynamics onto the small window plotted here: the window is small enough so that the eigenvector can be seen as ``flat'' but large enough so that the short-range dynamics will not involve indices outside of this window.}
	\label{fig:illusproof}
\end{figure}
\paragraph{}Consider $g_{\tau},$
\begin{align*}
\partial_{\tau} g_{\tau}(k)=\frac{1}{N}\sum_{\vert j-k\vert\leqslant \ell}\frac{g_{\tau}(j)-g_{\tau}(k)}{(\lambda_j(t+\tau)-\lambda_k(t+\tau))^2}\quad\text{with}\quad
g_{0}(k)=(\text{Av}_{k_w}\varphi_{t})(k).
\end{align*}
First note that, in order to prove \eqref{goal}, it is enough to show that
\begin{equation}\label{eq:gtau}
g_{\tau}(k)=\sigma_t^2(\mathbf{q},k,\eta)+\mathcal{O}\left(\left(\frac{\ell}{w}+\frac{N\tau}{\ell}+\frac{1}{\sqrt{N\eta}}+\frac{\tau}{\eta}+\frac{w}{Nt}+\frac{N\eta}{\ell}\right)\sigma_t^2(\mathbf{q},k,\eta)\right)
\end{equation}
where the parameter $\eta$, the spectral resolution, will be chosen so that $N^{-1}\ll\eta\ll t$.
Indeed we have, 
\begin{align}
\hspace{-0.8cm}\vert f_{\tau}(k)-g_{\tau}(k)\vert&=\left\vert (U_\mathscr{B}(0,\tau)\varphi_t)(k)-(U_{\mathscr{S}}(0,\tau)\text{Av}_{k_w}\varphi_{t})(k)\right\vert\\
\label{suffice1}&=\left[(U_{\mathscr{B}}(0,\tau)-U_{\mathscr{S}}(0,\tau))\varphi_t\right](k) +[U_\mathscr{S}(0,\tau)(\mathrm{Id}-\text{Av}_{k_w})\varphi_t](k).
\end{align}
By Lemma \ref{shortrange}, we get
\begin{equation}
[(U_{\mathscr{B}}(0,\tau)-U_{\mathscr{S}}(0,\tau))\varphi_t](k)\leqslant
N^{(n+4)\varepsilon}\frac{N\tau}{\ell}\sigma_t^2(\mathbf{q},k,\eta).
\end{equation}
Now, for the second term in \eqref{suffice1}. Since $(\varphi_{t}-\text{Av}_{k_w}\varphi_{t})(k)=0$ for $k\in [\![k_w-w/2,k_w+w/2]\!],$ and taking $w\gg \ell N^\varepsilon$, looking at $k\in[\![k_w-w/3,k_w+w/3]\!]$ for instance, Lemma \ref{finitespeed} tells us that the term is exponentially small. Thus, we obtain
$$f_\tau(k)=g_\tau(k)+\mathcal{O}\left(N^{(n+4)\varepsilon}\frac{N\tau}{\ell}\sigma_t^2(\mathbf{q},k,\eta)\right).$$

We first prove the following equation which can be seen as an averaged version of the result.  We now show the following lemma which is analogous to \cite{bourgade2017eigenvector}*{Lemma 7.3}
\begin{lemma}
For $k_0\in[\![k_w-u,k_w+u]\!],$ set $z^{(k_0)}=\gamma_{k_0,t}+\mathrm{i}\eta\in\mathcal{D}_{r}^{\vartheta,\kappa}$
\begin{multline}\label{g}
\left\vert 
	\mathrm{Im}\left(
		\frac{1}{N}\sum_{k=1}^N\frac{g_{\tau}(k)}{\lambda_k(t)-z^{(k_0)}}
	\right)
	- 
	\mathrm{Im}(m_t(z^{(k_0)}))\sigma_t^2(\mathbf{q},k_w,\eta)
\right\vert
\\
\leqslant
N^{(n+6)\varepsilon}\left(\frac{\ell}{w}+\frac{N\tau}{\ell}+\frac{1}{\sqrt{N\eta}}+\frac{\tau}{\eta}+\frac{w}{Nt}+\sqrt{\eta}\right)\sigma_t^2(\mathbf{q},k_w,\eta).
\end{multline}
\end{lemma}

\begin{proof}
First, decompose the left hand side term into three different terms :
\begin{align}
&\label{541}\left\vert \mathrm{Im}\left(\frac{1}{N}\sum_{k=1}^N\frac{(U_\mathscr{S}(0,\tau)\text{Av}_{k_w}\varphi_{t})(k)-(\text{Av}_{k_w}U_\mathscr{S}(0,\tau)\varphi_{t})(k)}{\lambda_k-z^{(k_0)}}\right)\right\vert\\
\label{542}+&\left\vert \mathrm{Im}\left(\frac{1}{N}\sum_{k=1}^N\frac{(\text{Av}_{k_w}U_\mathscr{S}(0,\tau)\varphi_t)(k)-\text{Av}_{k_w}f_\tau(k)}{\lambda_k-z^{(k_0)}}\right)\right\vert\\
\label{543}+&\left\vert \mathrm{Im}\left(\frac{1}{N}\sum_{k=1}^N\frac{\text{Av}_{k_w}f_\tau(k)}{\lambda_k-z^{(k_0)}}\right)-\mathrm{Im}(m_t(z^{(k_0)}))\sigma_t^2(\mathbf{q},k_w,\eta)\right\vert.
\end{align}

To bound \eqref{541}, we write
\begin{equation}\label{eq:integrand}
(U_\mathscr{S}(0,\tau)\text{Av}_{k_w}\varphi_{t})(k)-(\text{Av}_{k_w}U_\mathscr{S}(0,\tau)\varphi_{t})(k)=\frac{2}{w}\int_{w/2}^w\mathfrak{U}_{k_w}^a(\varphi_t)(k)\mathrm{d}a
\end{equation}
with
\begin{equation*}
\mathfrak{U}_{k_w}^a(\varphi_t)(k)=(U_\mathscr{S}(0,\tau)\text{Flat}_{k_w}^a\varphi_{t})(k)-(\text{Flat}_{k_w}^aU_\mathscr{S}(0,\tau)\varphi_{t})(k).
\end{equation*}
Look now at what happens around $k_w-a$, the other boundary of the window $k_w+a$ can be bounded exactly the same way.
By finite speed of propagation from Lemma \ref{finitespeed}, for $k<k_w-a-\ell N^\varepsilon$, we easily get
$$ \left(U_\mathscr{S}(0,\tau)\text{Flat}_{k_w}^a\varphi_t\right)(k)=\text{Flat}_{k_w}^a\left(U_\mathscr{S}(0,\tau)\varphi_t\right)(k)+\mathcal{O}\left(N^ne^{-N^{\varepsilon/2}}\right).$$ 
The same equality is true for $k>k_w-a+\ell N^\varepsilon$ using the same argument.\\
\indent For $k_w-a-\ell N^\varepsilon\leqslant k\leqslant k_w-a+\ell N^\varepsilon$, since the operator $U_\mathscr{S}$ is bounded in $\ell^\infty$, we have
\begin{equation}
\left\vert\mathfrak{U}_{k_w}^a(\varphi_t)(k)\right\vert\leqslant 2\sup_{j:\vert j-k_w\vert\leqslant N^\varepsilon\ell}\varphi_t(j)\leqslant N^\varepsilon \sigma_t^2(\mathbf{q},k_w,\eta)
\end{equation}
where we used Corollary \ref{coro:normephi} combined with \eqref{eq:variationsigma}. Now, in the integrand in \eqref{eq:integrand} there is a set of measure at most $N^\varepsilon\ell$ which is not exponentially small which gives, combined with Theorem \ref{local},
that we can bound \eqref{541} by 
\begin{equation}\label{322}
\eqref{541}\leqslant N^\varepsilon \frac{\ell}{w}\mathrm{Im}\,m_t(z^{(k_0)}))\sigma^2_t(\mathbf{q},k_w)\leqslant N^\varepsilon \frac{\ell}{w}\sigma_t^2(\mathbf{q},k_w,\eta),
\end{equation}
where we used that $\mathrm{Im}(m_t(z^{(k_0)}))$ is bounded in the spectral window from Lemma \ref{lem:stieljes}.

To bound \eqref{542}, noting that $f_\tau=U_\mathscr{B}(0,\tau)\varphi_t$,
\begin{multline*}
\vert(\text{Av}_{k_w}U_\mathscr{S}(0,\tau)\varphi_t)(k)-(\text{Av}_{k_w}U_\mathscr{B}(0,\tau)\varphi_t)(k)\vert\leqslant \left\vert\left[\left(U_\mathscr{S}(0,\tau)-U_\mathscr{B}(0,\tau)\right)\varphi_t\right](k)\right\vert
\\\leqslant N^{5\varepsilon}\frac{N\tau}{\ell}\sigma^2_t(\mathbf{q},k,\eta)
\end{multline*}
where we applied Lemma \ref{shortrange}.\\
Thus we have
\begin{align}\label{323}
\eqref{542}\leqslant N^{5\varepsilon}\frac{CN\tau}{\ell}\sigma_t^2(\mathbf{q},k,\eta)\mathrm{Im}\left(\frac{1}{N}\sum_{k=1}^N\frac{1}{\lambda_k-z^{(k_0)}}\right)\leqslant N^{5\varepsilon}\frac{N\tau}{\ell}\sigma_t^2(\mathbf{q},k,\eta)
\end{align}
where we used the averaged local law from Theorem \ref{local} and the fact that in the spectral window we have $\mathrm{Im}\left(m_t(z^{(k_0)})\right)\leqslant C.$

\paragraph{}To bound \eqref{543}, we want to use the local laws. Recalling that $z^{(k_0)}=\gamma_{k_0}+\mathrm{i}\eta,$ then
\begin{multline*}
\mathrm{Im}\left(\frac{1}{N}\sum_{k=1}^N\frac{(\text{Av}_{k_w}U_\mathscr{B}(0,\tau)\varphi_t)(k)}{\lambda_k-z^{(k_0)}}\right)=\mathrm{Im}\left(\frac{1}{N}\sum_{\vert k-k_0\vert\leqslant N\sqrt{\eta}}\frac{(\text{Av}_{k_w}U_\mathscr{B}(0,\tau)\varphi_t)(k)}{\lambda_k-z^{(k_0)}}\right)\\+\mathcal{O}\left(\sqrt{\eta}\sigma_t^2(\mathbf{q},k_0,\eta)\right).
\end{multline*}
where we used the fact that similarly to the proof of Lemma \ref{shortrange}, we have that, for any threshold $\ell_0$,
\begin{equation}\label{eq:partialsum}
\mathrm{Im}\left(
	\frac{1}{N}\sum_{\vert k-k_0\vert> \ell_0}\frac{(\text{Av}_{k_w}f_{\tau})(k)}{\lambda_k-z^{(k_0)}}
\right)
=
\mathcal{O}\left(
	N^\varepsilon\frac{N\eta}{\ell_0}
	\sigma_t^2(\mathbf{q},k_0,\eta)
\right)
\end{equation}
Taking $\ell_0=N\sqrt{\eta}$ gives here the bound.
If we use the notation \eqref{notationak}, we obtain
\begin{align}
&\mathrm{Im}\left(
	\frac{1}{N}\sum_{k=1}^N\frac{(\text{Av}_{k_w}f_{\tau})(k)}{\lambda_k-z^{(k_0)}}
\right)
=\\
&=\mathrm{Im}\left(
	\frac{1}{N}\sum_{\vert k-k_0\vert\leqslant N\sqrt{\eta}}\frac{a_kf_{\tau}(k)+(1-a_k)\sigma_t^2(\mathbf{q},k_w,\eta)}{\lambda_k-z^{(k_0)}}
\right)
+
\mathcal{O}\left(
	\sqrt{\eta}\sigma_t^2(\mathbf{q},k_0,\eta)
\right)\\
&\label{336}= 
a_{k_0}\mathrm{Im}\left(
	\frac{1}{N}\sum_{k=1}^N\frac{f_{\tau}(k)}{\lambda_k-z^{(k_0)}}
\right)
+
(1-a_{k_0})\sigma_t^2(\mathbf{q},k_w,\eta)\mathrm{Im}\left(
	\frac{1}{N}\sum_{k=1}^N\frac{1}{\lambda_k-z^{(k_0)}}
\right)\\
\label{337}&+
\mathrm{Im}\left(
	\frac{1}{N}\sum_{\vert k-k_0\vert\leqslant N\sqrt{\eta}}^N\frac{(a_k-a_{k_0})f_{\tau}(k)+(a_{k_0}-a_k)\sigma_t^2(\mathbf{q},k_w,\eta)}{\lambda_k-z^{(k_0)}}
\right)
+
\mathcal{O}\left(
	\sqrt{\eta}\sigma_t^2(\mathbf{q},k_0,\eta)
\right).
\end{align}

Note now that $$\mathrm{Im}\left(\sum_{\alpha=1}^N\frac{q_\alpha^2}{D_\alpha-z^{(k_0)}-tm_t(z^{(k_0)})}\right)=\left(\mathrm{Im}(m_t(z^{(k_0)}))+\frac{\eta}{t}\right)\sigma_t^2(\mathbf{q},k_0,\eta),$$
so that, by Lemma \ref{lem:localtau} and Theorem \ref{local}, we obtain
\begin{multline*}
\eqref{336}=a_{k_0}\mathrm{Im}(m_t(z^{(k_0)}))
\sigma_t^2(\mathbf{q},k_0, \eta)
+(1-a_{k_0})\sigma_t^2(\mathbf{q},k_w,\eta)\mathrm{Im}(m_t(z^{(k_0)}))\\
+\mathcal{O}\left(
	N^\varepsilon\left(
		\frac{1}{\sqrt{N\eta}}+\frac{1}{N\eta}+\frac{\tau}{t}+\sqrt{\eta}
	\right)
	\sigma_t^2(\mathbf{q},k_w,\eta)\right)
\end{multline*}
Note that we do not keep the error $\eta/t$ as we take $\eta\ll \tau$ so that $\tau/t$ is of larger order. Now using the deterministic bound \eqref{eq:variationsigma}, we obtain that since $k\in[\![k_w-w,k_w+w]\!]$,
\[
\eqref{336}
=
\mathrm{Im}(m_t(z^{(k_0)}))\sigma_t^2(\mathbf{q},k_w,\eta)
+
\mathcal{O}\left(
	N^\varepsilon\left(
		\frac{1}{\sqrt{N\eta}}+\frac{\tau}{t}+\frac{w}{Nt}+\sqrt{\eta}
	\right)\sigma_t^2(\mathbf{q},k_w,\eta)
\right)
\]
Finally, with the elementary property $\vert a_i-a_k\vert\leqslant \frac{C\vert i-k\vert}{N},$ we get that $\eqref{337}\leqslant\! C{\sqrt{\eta}\sigma_t^2(\mathbf{q},k_w,\eta)}.$ Putting these estimates together, we obtain
 \begin{equation}
\label{324}\eqref{543}=\mathcal{O}\left(
	N^\varepsilon\left(
		\frac{1}{\sqrt{N\eta}}+\frac{\tau}{t}+\frac{w}{Nt}+\sqrt{\eta}
	\right)\sigma_t^2(\mathbf{q},k_w,\eta)
\right)
 \end{equation}
 
Combining  \eqref{322},\eqref{323} and \eqref{324}, we get the final result 
 \begin{multline*}
\left\vert \mathrm{Im}\left(\frac{1}{N}\sum_{k=1}^N\frac{g_{\tau}(k)}{\lambda_k-z^{(k_0)}}\right)- \mathrm{Im}(m_t(z^{(k_0)}))\sigma_t^2(\mathbf{q},k_w,\eta)\right\vert\\
=
\mathcal{O}\left(N^\varepsilon\left(\frac{\ell}{w}+\frac{N\tau}{\ell}+\frac{1}{\sqrt{N\eta}}+\frac{\tau}{t}+\frac{w}{Nt}+\sqrt{\eta}\right)\sigma_t^2(\mathbf{q},k_w,\eta)\right).
 \end{multline*}
 
\end{proof}

 Now, we just need to prove that \eqref{eq:gtau}. Let $k_m$ be the index such that $g_{\tau}(k_m)=\max_{k}g_{\tau}(k)$ and $z=\lambda_{k_m}+i\eta.$ If we have that 
\begin{align}\label{-10}
\left\vert g_{\tau}(k_m)-\sigma_t^2(\mathbf{q},k_m,\eta)\right\vert\leqslant N^{-10},
\end{align}
there is nothing to prove. Now if the left hand side is greater than $N^{-10},$ by finite speed of propagation, $k_m$ is in the interval $[\![k_w-u,k_w+u]\!].$ Indeed, if it is not then the difference in \eqref{-10} would be exponentially small. We then have
\begin{align}
\partial_{\tau} &g_{\tau}(k_m)=\frac{1}{N}\sum_{\substack{\vert j-k_m\vert\leqslant \ell\\j\neq k_m}}\frac{g_{\tau}(j)-g_{\tau}(k_m)}{(\lambda_j-\lambda_{k_m})^2}\nonumber\\
&\leqslant \frac{1}{N\eta}\sum_{\substack{\vert j-k_m\vert\leqslant \ell\\j\neq k_m}}\frac{\eta g_{\tau}(j)}{(\lambda_j-\lambda_{k_m})^2+\eta^2}-\frac{g_{\tau}(k_m)}{N\eta}\sum_{\substack{\vert j-k_m\vert\leqslant \ell\\j\neq k_m}}\frac{\eta}{(\lambda_j-\lambda_{k_m})^2+\eta^2}\nonumber\\
&\leqslant \frac{1}{\eta}\mathrm{Im}\left(\frac{1}{N}\sum_{k=1}^N\frac{g_{\tau}(k)}{\lambda_k-z}\right)-\frac{g_{\tau}(k_m)}{\eta}\mathrm{Im}(m_t(z))+\mathcal{O}\left(\frac{N^\varepsilon}{\eta}\left(\frac{N\eta}{\ell}+\frac{1}{N\eta}\right)\sigma_t^2(\mathbf{q},k_m,\eta)\right)\nonumber\\
&\label{348}\leqslant \frac{C}{\eta}\left(\sigma_t^2(\mathbf{q},k_w,\eta)-g_{\tau}(k_m)\right)
\\&+\mathcal{O}\left(\frac{N^\varepsilon}{\eta}\left(\left(\frac{N\eta}{\ell}+\frac{1}{N\eta}\right)\sigma_t^2(\mathbf{q},k_m,\eta)+\left(\frac{\ell}{w}+\frac{N\tau}{\ell}+\frac{1}{\sqrt{N\eta}}+\frac{\tau}{t}+\frac{w}{Nt}+\sqrt{\eta}\right)\sigma_t^2(\mathbf{q},k_w,\eta)\right)\right)
\end{align}
where we used in the first inequality that $g_{\tau}(k_m)$ is the maximum, in the second inequality that extending the sum to all $j$ adds an error $(N^{1+\varepsilon}\eta/\ell\sigma_t^2(\mathbf{q},k_m,\eta))$ using \eqref{eq:partialsum} and Theorem \ref{local} combined with the estimate from Lemma \ref{lem:stieljes}. Finally in the last inequality we used \eqref{g}, $c\leqslant\mathrm{Im}(m_t(z))\leqslant C$ in the spectral window and that the rigidity errors that appears from changing $\lambda_{k_m}$ into $\gamma_{k_m,t}$ from Theorem \ref{theo:rigidity} are smaller than the other terms. 
Injecting our variance $\sigma_t^2(\mathbf{q},k_w,\eta)$ in \eqref{348} which does not depend of $\tau$,
\begin{multline*}
\partial_{\tau}\left(g_{\tau}(k_m)-\sigma_t^2(\mathbf{q},k_w,\eta)\right)\leqslant -\frac{C}{\eta}\left(g_{\tau}(k_m)-\sigma_t^2(\mathbf{q},k_w,\eta)\right)\\
+\mathcal{O}\left(\left(\frac{N\eta}{\ell}+\frac{\ell}{w}+\frac{N\tau}{\ell}+\frac{1}{\sqrt{N\eta}}+\frac{\tau}{t}+\frac{w}{Nt}+\sqrt{\eta}\right)\frac{N^\varepsilon}{\eta}\sigma_t^2(\mathbf{q},k_w,\eta)\right).
\end{multline*}
Thus, writing 
$S_{\tau}=g_{\tau}(k_m)-\sigma_t(\mathbf{q},k_w)^2$
we get
\begin{equation*}
 \partial_{\tau} S_{\tau}\leqslant -\frac{C}{\eta}S_{\tau}+\mathcal{O}\left(\left(\frac{N\eta}{\ell}+\frac{\ell}{w}+\frac{N\tau}{\ell}+\frac{1}{\sqrt{N\eta}}+\frac{\tau}{t}+\frac{w}{Nt}+\sqrt{\eta}\right)\frac{N^\varepsilon}{\eta}\sigma_t^2(\mathbf{q},k_w,\eta)\right).
\end{equation*}

Note that $S_\tau$ is not necessary differentiable as the maximum is not necesarily unique for instance, but one can get the result by instead considering 
\[
\partial_\tau S_\tau = \limsup_{s\rightarrow\tau}\frac{S_s-S_\tau}{s-\tau}.
\]
 Using Gronwall's lemma, we have if $\eta\ll \tau$,
  $$S_{\tau}=\mathcal{O}\left(N^\varepsilon\left(\frac{N\eta}{\ell}+\frac{\ell}{w}+\frac{N\tau}{\ell}+\frac{1}{\sqrt{N\eta}}+\frac{\tau}{t}+\frac{w}{Nt}+\sqrt{\eta}\right)\sigma_t^2(\mathbf{q},k_w,\eta)+N^{-C}\right)$$
 for any $C$. We can do the same reasoning with the infimum. Finally taking the following parameters
\begin{equation}\label{eq:parameter}
\eta = N^{-\varepsilon}\tau,\quad w=\left(N\tau(Nt)^2\right)^{1/3},\quad \ell = \sqrt{N\tau u}
\end{equation} 
we get the result for a single particle. Note that with these choices of parameters we have the correct relations: $N^{-1}\ll\eta\ll\tau\ll \ell\ll w\ll Nt.$
\subsubsection{Proof of Proposition \ref{prop:moment}: Case of $\texorpdfstring{\bm{n}}{}$ particles} In the previous part of the proof, we looked only at the second moment $\mathds{E}\left[N\langle \mathbf{q},u_k(t)\rangle^2\vert\bm{\lambda}\right]$ which corresponds to a single particle in the site $k$. Now, we will do the proof of \eqref{resultbulk} by induction on the number of particles.\\
We can first define the same objects as the single particle case: we will consider the short range dynamics for a small time $\tau\ll t$ with initial condition an average of the eigenvectors moment of $W_t$ localized onto a specific window. More precisely define, with $\bm{\xi}_0$ being the configuration with $n$ particles that lies at the center of our window of size $w$ in the sense of the distance \eqref{eq:distance},
\begin{align}
\partial_{\tau}g_{\tau}(\bm{\xi})&=\frac{1}{N}\sum_{\substack{\vert i-j\vert\leqslant \ell\\i\neq j}}\frac{g_{\tau}(\bm{\xi}^{ij})-g_{\tau}(\bm{\xi})}{(\lambda_i-\lambda_j)^2},\\
g_0(\bm{\xi})&=\left(\text{Av}_{\bm{\xi_0}}f_t\right)(\bm{\xi}).
\end{align}

By the same reasoning as for the one particle case, using Lemmas \ref{shortrange} and \ref{finitespeed} with $n$ particles, we get
\begin{equation}
\vert f_{\tau}(\bm{\xi})-g_{\tau}(\bm{\xi})\vert\leqslant N^{(n+4)\varepsilon} \frac{CN\tau}{\ell}\sigma_t^2(\mathbf{q},\bm{\xi},\eta).
\end{equation}

To reason by induction on the number of particles, we need to show the following equation, similar to \eqref{g} in the case of one particle. For $k_r\in \mathcal{A}^\kappa_{r}$, define $z^{(k_r)}=\gamma_{k_r}+i\eta$ and let $\bm{\xi}$ a configuration of $n$ particles with at least one particle in $k_r$, we need to show
\begin{equation}\label{npartlem}
\begin{split}
\mathrm{Im}\left(\frac{1}{N}\sum_{k=1}^N\frac{g_{\tau}\left(\bm{\xi}^{k_r,k}\right)}{\lambda_k(t+\tau)-z^{(k_r)}}\right)-&\left(a_{\bm{\xi}}f_{\tau}(\bm{\xi}\setminus k_r)\sigma_t(\mathbf{q},k_r,\eta)^2+(1-a_{\bm{\xi}})\sigma_t^2(\mathbf{q},\bm{\xi_0},\eta)\right)\\=&\mathcal{O}\left(N^{(n+4)\varepsilon}\left(\frac{\ell}{w}+\frac{N\tau}{\ell}+\frac{1}{\sqrt{N\eta}}+\frac{\tau}{t}\right)\sigma_t^2(\mathbf{q},\bm{\xi_0},\eta)\right).
\end{split}
\end{equation}
where $\bm{\xi}\setminus k_r$ denote the configuration where we removed one particle in $k_r$ from $\bm{\xi}.$\\
\indent We apply the same decomposition in three terms as in the single particle case, the first two terms can be bounded the same way and we can bound the left hand side of \eqref{npartlem} by
\begin{multline}\label{eq:lasteq}
\left\vert\mathrm{Im}\left(\frac{1}{N}\sum_{k=1}^N
\frac{\left(\text{Av}_{\xi_w}f_{\tau}\right)\left(\bm{\xi}^{k_r,k}\right)}{\lambda_k-z^{(k_r)}}\right)\right.\\
\left.-\left(a_{\bm{\xi}}f_t(\bm{\xi}\setminus k_r)\mathrm{Im}(m_t(z^{(k_r)}))\sigma_t^2(\mathbf{q},k_r,\eta)+(1-a_{\bm{\xi}})\mathrm{Im}(m_t(z^{(k_r)}))\sigma_t^2(\mathbf{q},\bm{\xi_0},\eta)\right)\right\vert
\\+
\mathcal{O}\left(
	N^{(n+4)\varepsilon}
	\left(
		\frac{\ell}{u}+\frac{N\tau}{\ell}
	\right)
	\sigma_t^2(\mathbf{q},\bm{\xi_0},\eta)
\right)
\end{multline}

Now we need to see that,
\begin{align*}
\text{Av}_{\bm{\xi}_0}f_{\tau}(\bm{\xi}^{k_r,k})&=a_{\bm{\xi}^{k_r,k}}f_{\tau}(\bm{\xi}^{k_r,k})+(1-a_{\bm{\xi}^{k_r,k}})\sigma_t^2(\mathbf{q},\bm{\xi_0},\eta)\\
&=\left(a_{\bm{\xi}}f_{\tau}(\bm{\xi}^{k_r,k})+(1-a_{\bm{\xi}})\sigma_t^2(\mathbf{q},\bm{\xi_0},\eta)\right)\\
&+\left((a_{\bm{\xi}^{k_r,k}}-a_{\bm{\xi}})f_{\tau}({\bm{\xi}^{k_r,k}})+(a_{\bm{\xi}}-a_{\bm{\xi}^{k_r,k}})\sigma_t^2(\mathbf{q},\bm{\xi_0},\eta)\right)\\
&=\left(a_{\bm{\xi}}f_{\tau}(\bm{\xi}^{k_r,k})+(1-a_{\bm{\xi}})\sigma_t^2(\mathbf{q},\bm{\xi_0},\eta)\right)+\mathcal{O}\left(\frac{d(\bm{\xi}^{k_r,k},\bm{\xi})\sigma_t^2(\mathbf{q},\bm{\xi_0},\eta)}{N}\right).
\end{align*}
We can use the same decomposition into $\vert k-k_r\vert\leqslant N\sqrt{\eta}$ and the averaged local law from Theorem \ref{local} to get
\begin{multline}\label{410}
\mathrm{Im}\left(\frac{1}{N}\sum_{k=1}^N
\frac{\text{Av}_{\xi_w}f_{\tau}\left(\bm{\xi}^{k_r,k}\right)}{\lambda_k-z^{(k_r)}}\right)=a_{\bm{\xi}}\mathrm{Im}\left(\frac{1}{N}\sum_{k=1}^N\frac{f_{\tau}(\bm{\xi}^{k_r,k})}{\lambda_k-z^{(k_r)}}\right)\\+(1-a_{\bm{\xi}})\mathrm{Im}(m_t(z^{(k_r)}))\sigma_t^2(\mathbf{q},\bm{\xi_0},\eta)+\mathcal{O}\left(\frac{N^\varepsilon}{N\eta}\sigma_t^2(\mathbf{q},\bm{\xi_0},\eta)\right).
\end{multline}
Consider the sum in the right hand side, recall that there is at least one particle in $k_r$ and denote $k_1,\dots,k_m$ with $m\leqslant n,$ the sites where there is at least one particle in the configuration $\bm{\xi}$. Recall that $z^{(k_r)}=\gamma_{k_r,t}+i\eta$,
\begin{align}
\frac{1}{N}&\sum_{k=1}^N\frac{\eta f_{\tau}(\bm{\xi}^{k_r,k})}{(\gamma_{k_r,t}-\lambda_k)^2+\eta^2}=\frac{1}{N}\sum_{k\notin\{k_1,\dots k_m\}}\frac{\eta f_{\tau}(\bm{\xi}^{k_r,k})}{(\gamma_{k_r,t}-\lambda_k)^2+\eta^2}+\mathcal{O}\left(\frac{N^{n\varepsilon}}{N\eta}\sum_{i=1}^m\sigma_t^2(\mathbf{q},\bm{\xi}^{k_r,k_i},\eta)\right)
\end{align}
where we used Corollary \ref{coro:normephi} on the indices we removed from the sum. Now we have the following equality for the first sum by definition of $f_\tau$,
\begin{multline}
\label{413}
\frac{1}{N}\sum_{k\notin\{k_1,\dots k_m\}}\frac{\eta f_{\tau}(\bm{\xi}^{k_r,k})}{(\gamma_{k_r,t}-\lambda_k)^2+\eta^2}
\\=\mathds{E}\left[\!\left.\prod_{\substack{1\leqslant r'\leqslant m \\r'\neq r}}\hspace{-.2cm}\frac{z_{r'}^{2j_{r'}}}{a(2j_{r'})}\frac{z_r^{2(j_r-1)}}{a(2(j_r-1))}\hspace{-.1cm}\sum_{j\notin\{k_1,\dots,k_m\}}\hspace{-1em}\frac{\eta z_j^2}{N((\gamma_{k_r,t}-\lambda_j)^2+\eta^2)}\right\vert\bm{\lambda}\right]
\end{multline}
By Lemma \ref{lem:localtau}, we have,
\begin{align}
\frac{1}{N}&\sum_{j\notin\{k_1,\dots,k_m\}}\frac{\eta z_j^2}{(\gamma_{k_r,t}-\lambda_j)^2+\eta^2}=\frac{1}{N}\sum_{k=1}^N\frac{\eta z_k^2}{(\gamma_{k_r,t}-\lambda_k)^2+\eta^2}+\mathcal{O}\left(\frac{N^\varepsilon}{N\eta }\sum_{j=1}^m\sigma^2_t(\mathbf{q},k_j,\eta)\right),\\
&=\mathrm{Im}\left(\frac{1}{N}\sum_{k=1}^N\frac{z_k^2}{\lambda_k-z^{(k_r)}}\right)+\mathcal{O}\left(\frac{N^\varepsilon}{N\eta }\sum_{j=1}^m\sigma^2_t(\mathbf{q},k_j,\eta)\right),\\
&\label{417}=\mathrm{Im}(m_t(z^{(k_r)}))\sigma_{t}^2(\mathbf{q},k_r)+\mathcal{O}\left(N^\varepsilon\left(\frac{1}{\sqrt{N\eta}}+\frac{\tau}{t}\right)\sigma_t^2(\mathbf{q},k_r)+\frac{N^\varepsilon}{N\eta }\sum_{j=1}^m\sigma^2_t(\mathbf{q},k_j,\eta)\right).
\end{align}

Combining \eqref{417} and \eqref{413} and using the bounds on the variations of $\sigma_t$ \eqref{eq:variationsigma}, we get
\begin{multline}\label{418}
\frac{1}{N}\sum_{k=1}^N\frac{\eta f_{\tau}(\bm{\xi}^{k_r,k})}{(\gamma_{k_r,t}-\lambda_k)^2+\eta^2}=\mathrm{Im}(m_t(z^{(k_r)}))\sigma_t(\mathbf{q},k_r,\eta)^2f_{\tau}(\bm{\xi}\setminus k_r)
\\+\mathcal{O}\left(N^\varepsilon\left(\frac{1}{\sqrt{N\eta}}+\frac{\tau}{t}\right)\sigma_t^2(\mathbf{q},\bm{\xi},\eta)\right)
\end{multline}

Finally, combining \eqref{418} and \eqref{410}, we have
\begin{multline*}
\mathrm{Im}\left(\frac{1}{N}\sum_{k=1}^N
\frac{\text{Av}_{\xi_0}f_{\tau}\left(\bm{\xi}^{k_r,k}\right)}{\lambda_k-z^{(k_r)}}\right)= a_{\bm{\xi}}f_t(\bm{\xi}\setminus k_r)\mathrm{Im}(m_t(z^{(k_r)}))\sigma_t^2(\mathbf{q},k_r,\eta)\\+(1-a_{\bm{\xi}})\mathrm{Im}(m_t(z^{(k_r)}))\sigma_t^2(\mathbf{q},\bm{\xi_0},\eta)+\mathcal{O}\left(N^\varepsilon\left(\frac{1}{\sqrt{N\eta}}+\frac{\tau}{t}\right)\sigma_t^2(\mathbf{q},\bm{\xi},\eta)\right)
\end{multline*}
which, combined with \eqref{eq:lasteq}, gives us \eqref{npartlem}.

We now follow the same proof as in the case of one particle : we state a maximum principle on the flattened and averaged moment. First define
\begin{equation}
\bm{\xi_m}=\max_{\substack{\bm{\xi}\\\mathcal{N}(\bm{\xi})=n}}g_{\tau}(\bm{\xi}),
\end{equation}
and let $k_1,\dots,k_m$ be the positions of the particles of the configuration $\bm{\xi_m}$ with $m\leqslant n.$ We are going to use our induction hypothesis in the maximum principle inequalities by \eqref{npartlem}.
\begin{align}
\partial_{\tau}&g_{\tau}(\bm{\xi_m})\leqslant\frac{C}{N}\sum_{\substack{\vert i-j\vert\leqslant l \\ i\neq j}}\frac{g_{\tau}(\bm{\xi_m}^{i,j})-g_{\tau}(\bm{\xi_m})}{(\lambda_i-\lambda_j)^2}\nonumber\\
&\leqslant \frac{C}{N}\sum_{r=1}^m\left(\frac{1}{\eta}\sum_{\substack{\vert j-k_r\vert\leqslant l \\j\neq k_r}}\frac{\eta g_{\tau}(\bm{\xi_m}^{k_r,j})}{(\lambda_j-\lambda_{k_r})^2+\eta^2}-\frac{g_{\tau}(\bm{\xi_m})}{\eta}\sum_{\substack{\vert j-k_r\vert\leqslant l\\j\neq k_r}}\frac{\eta}{(\lambda_j-\lambda_{k_r})^2+\eta^2}\right)\nonumber\\
&\leqslant \frac{C}{\eta}\sum_{r=1}^m\left[\mathrm{Im}\left(\frac{1}{N}\sum_{j=1}^N\frac{g_{\tau}(\bm{\xi_m}^{k_r,j})}{\lambda_j-z^{(k_r)}}\right)-{g_{\tau}(\bm{\xi_m})}\mathrm{Im}\left(s_\tau(z^{(k_r)})\right)\right]+\mathcal{O}\left(\frac{N\eta}{\ell}\frac{\sigma_t^2(\mathbf{q},\bm{\xi_m},\eta)}{\eta}\right)\nonumber\\
\label{avantind}&\leqslant \frac{C}{\eta}\sum_{r=1}^m\left({a_{\bm{\xi_m}}}f_{\tau}(\bm{\xi_m}\setminus k_r)\mathrm{Im}(m_t(z^{(k_r)}))\sigma_t^2(\mathbf{q},k_r,\eta)+{(1-a_{\bm{\xi_m}})}\mathrm{Im}(m_t(z^{(k_r)}))\sigma_t^2(\mathbf{q},\bm{\xi_0},\eta)\right)\\
&-\frac{C}{\eta}\sum_{r=1}^mg_{\tau}(\bm{\xi_m})\mathrm{Im}(m_t(z^{(k_r)}))+
\mathcal{O}\left(
	\frac{N^{(n+4)\varepsilon}}{\eta}
	\left(
		\frac{1}{\sqrt{N\eta}}+\frac{\tau}{t}+\frac{\ell}{w}+\frac{N\tau}{\ell}+\frac{N\eta}{\ell}
	\right)
	\sigma^2_t(\mathbf{q},\bm{\xi_m},\eta)
\right).\nonumber
\end{align}

Now, we use the induction assumption on $f_{\tau}(\bm{\xi_m}\setminus k_r)$ which is a $(n-1)$th moment and obtain
\begin{align}\label{induction}
f_{\tau}(\bm{\xi_m}\setminus k_r)\sigma_t(\mathbf{q},k_r,\eta)^2=\sigma_t^2(\mathbf{q},\bm{\xi_m},\eta)+\mathcal{O}\left(\left(\frac{1}{\sqrt{N\tau}}+\left(\frac{\tau}{t}\right)^{1/3}\right)\sigma^2_t(\mathbf{q},\bm{\xi_m},\eta)\right).
\end{align}

Besides, we can easily see that, since $d(\bm{\xi_w},\bm{\xi_m})\leqslant 2w$, from \eqref{eq:variationsigma},
\begin{equation}\label{sigma2}
\left\vert\sigma_t^2(\mathbf{q},\bm{\xi_0},\eta)^2-\sigma_t^2(\mathbf{q},\bm{\xi_m},\eta)^2\right\vert\leqslant\frac{N^\varepsilon w}{Nt}\sigma_t^2(\mathbf{q},\bm{\xi_m},\eta).
\end{equation} 

Now, injecting \eqref{induction} and \eqref{sigma2} in \eqref{avantind}, we get
\begin{multline*}
\partial_{\tau}\left(g_{\tau}(\bm{\xi_m}) - \sigma_t^2(\mathbf{q},\bm{\xi_0},\eta)\right)\leqslant-\frac{C}{\eta}\sum_{r=1}^m\left(g_{\tau}(\bm{\xi_m})-\sigma_t^2(\mathbf{q},\bm{\xi_0},\eta)\right)\\
+\mathcal{O}\left(\left(\frac{N\tau}{\ell}+\frac{\ell}{w}+\frac{1}{\sqrt{N\eta}}+\frac{\tau}{t}+\frac{N\eta}{\ell}+\frac{w}{Nt}+\frac{1}{\sqrt{N\tau}}+\left(\frac{\tau}{t}\right)^{1/3}\right)\frac{N^{(n+6)\varepsilon}}{\eta}\sigma_t^2(\mathbf{q},\bm{\xi_m},\eta)\right).
\end{multline*}

Doing the same reasoning as in the proof for one particle, we get, by applying Gronwall's lemma,
\begin{multline*}
g_{\tau}(\bm{\xi_m})=\sigma_t^2(\mathbf{q},\bm{\xi_m},\eta)\\
+\mathcal{O}\left(\left(\frac{N\tau}{\ell}+\frac{\ell}{w}+\frac{1}{\sqrt{N\eta}}+\frac{\tau}{t}+\frac{N\eta}{\ell}+\frac{w}{Nt}+\left(\frac{1}{\sqrt{N\tau}}+\left(\frac{\tau}{t}\right)^{1/3}\right)\right)\sigma_t^2(\mathbf{q},\bm{\xi_m},\eta)\right)
\end{multline*}

We can again do the same reasoning with the infimum and choosing the parameters as in \eqref{eq:parameter} the claim from Proposition \ref{prop:moment} follows.
\subsection{Analysis of the perfect matching observable}\label{subsec:perfect}
In this subsection we will again condition on the event $A$ of good eigenvalue paths where the local laws and the finite speed of propagation holds. Consider now a deterministic set of indices $I\subset[\![1,N]\!]$. Note that in the definition of the centered overlaps $p_{ii}$, we can only center by a constant not depending on $i$. However in Theorem \ref{theo:resultQUE}, one can see that the expectation of the probability mass of the $i$th-eigenvector on $I$ clearly depends on $i$. Thus, we will need to localize our perfect matching observables onto a window of size $w$ chosen later and show that these $p_{ii}$ are, up to an error depending on $w$, centered around the same constant. The size of the window $w$ will be taken so that $N\tau \ll w\ll Nt$ similarly to the previous section. More precisely, we will fix an integer $i_0\in\mathcal{A}_r^\kappa$ and consider the set of indices 
\[
\mathcal{A}_{w}^\kappa(i_0) = \left\{ i\in[\![1,N]\!],\,\gamma_{t,i}\in[\gamma_{t,i_0}-(1-\kappa)w,\gamma_{t,i_0}+(1-\kappa)w]\right\}
\]
so that we will take for our centered diagonal overlaps
\begin{equation}\label{eq:pii}
p_{ii}=\sum_{\alpha\in I}u_i(\alpha)^2-C_0\quad\text{with}\quad C_0=\frac{1}{N}\sum_{\alpha\in I}\sigma_t^2(\mathbf{e}_{\alpha},i_0),
\end{equation}
the overlaps for $i\neq j$ will not change.  First consider these overlaps for the matrix $W_t$ and define, similarly to the previous subsection
\[
\Phi_{t}(\bm{\xi})=\frac{1}{\mathcal{M}(\bm{\xi})}\mathds{E}\left[\sum_{G\in\mathcal{G}_{\bm{\xi}}}P(G)\middle\vert\bm{\lambda}\right].
\]
This quantity corresponds the perfect matching observable for our initial matrix $W_t$ and we make it undergo the dynamics \eqref{eq:discs2deformed} so that we define 
\[
F_s(\bm{\xi})=U_{\mathscr{B}}(0,s)\Phi_t(\bm{\xi})
\] 
where $\mathscr{B}$ is defined in \eqref{eq:defdyna}. We now prove the result from Theorem \ref{theo:resultQUE} for a Gaussian divisible ensemble for $p_{i_0i_0}$. We will need the following technical lemma allowing us to bound the $p_{ij}$ by the perfect matching observables.
\begin{lemma}[\cite{bourgade2018random}*{Lemma 3.6}]\label{lem:holderdeformed}
Take an even integer $n$, there exists $C>0$ depending on $n$ such that for any $i<j$ and any time $s$ we have
\begin{equation}
\mathds{E}\left[p_{ij}(s)^n\middle\vert\bm{\lambda}\right]\leqslant C\left(F_s(\bm{\xi}^{(1)})+F_s(\bm{\xi}^{(2)})+F_s(\bm{\xi}^{(3)})\right)
\end{equation}
where $\bm{\xi}^{(1)}$ is the configuration of $n$ particles in the site $i$, $\bm{\xi}^{(2)}$ n particles in the site $j$, and $\bm{\xi}^{(3)}$ an equal number of particles between the site $i$ and $j$. 
\end{lemma}
We will also use repeatedly the following bound on the eigenvectors which comes from Corollary \ref{coro:normephi}
\[
\sum_{k\in I}u_k(\alpha)^2\leqslant N^\varepsilon\widehat{I}.
\]
The purpose of this section is to prove Theorem \ref{theo:resultQUE} for another matrix ensemble: a deformed Wigner matrix perturbed by a small Gaussian component. More precisely, we state it as the following theorem.

\begin{theorem}\label{theo:resultint}
Consider $\mathfrak{a}$ and $\omega$ two small positive constants and $\kappa\in(0,1)$, take $t\in\mathcal{T}_\omega$, $D$ a deterministic diagonal matrix given by Definition \ref{def:init} and $W$ a Wigner matrix given by Definition \ref{def:wignernonsmooth}. Let $\tau\in\mathcal{T'}_{\mathfrak{a}}:=[N^{-1+\mathfrak{a}},\,N^{-\mathfrak{a}}t]$, then if $u_1,\dots,u_N$ are the eigenvectors of the matrix $$H_\tau = D+\sqrt{{t}}W+\sqrt{\tau}\mathrm{GOE},$$
define the error
\[
\Xi(\tau)=\frac{\widehat{I}}{\sqrt{N\tau}}+\widehat{I}\frac{\tau}{t},
\]
 we have, for any $k,\ell\in \mathcal{A}_{r}^\kappa$ with $k\neq\ell$ and any $\varepsilon>0$ and $D>0$,
\[
\mathds{P}\left(
		\left\vert
		\sum_{\alpha\in I}
		\left(
			u_k(\alpha)^2-\frac{1}{N}\sigma_t^2(\alpha,k,\tau)
		\right)
	\right\vert
	+
	\left\vert
		\sum_{\alpha\in I}u_k(\alpha)u_\ell(\alpha)
	\right\vert
	\geqslant
	N^\varepsilon\Xi(\tau)
\right)\leqslant N^{-D}.
\] 
\end{theorem}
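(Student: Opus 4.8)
The strategy is to run the Dyson Brownian motion from the matrix $W_t$ and track the perfect matching observable $F_{\bm{\lambda},s}(\bm{\eta})$ from \eqref{eq:perfobs}, which by the second flow theorem satisfies the same parabolic equation \eqref{eq:discs2} as the moment observable. We fix the window parameter $w$ entering the centering constant $C_w$ from \eqref{eq:pii}, and we fix a spectral resolution $\eta$ coupled to the running time $\tau$ by $\eta = N^{-\varepsilon}\tau$, exactly as in \eqref{eq:parameter}. The goal is to show that for a configuration $\bm{\eta}$ supported near a reference site $i_0\in\mathcal{A}_r^\kappa$, one has $F_\tau(\bm{\eta})$ close to the deterministic product profile $\prod_k (\sigma_t^2(\mathbf{e}_\cdot,i_0)\cdot |I|/N - \text{correction})^{\eta_k}$ up to an error of size $\Xi(\tau)^{\mathcal{N}(\bm{\eta})}$ times a harmless polynomial loss $N^\varepsilon$. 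Combined with Lemma \ref{lem:holder} (which bounds moments of the overlaps $p_{ij}$ by the $F$-observable at three explicit configurations) and Markov's inequality applied with $n$ taken large (so that $N^{\varepsilon n}$ beats any $N^{D}$), this yields the claimed overwhelming-probability bound: $\mathds{P}(|\sum p_{ii}| + |\sum p_{ij}| \ge N^\varepsilon \Xi(\tau)) \le N^{-D}$.

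The mechanism for controlling $F_\tau$ is the localized maximum principle used in Section \ref{subsec:moment}, now carried over to the perfect-matching setting exactly as in \cite{bourgade2018band}. First I would split $\mathscr{B}_\tau = \mathscr{S}_\tau + \mathscr{L}_\tau$ into short and long range with cutoff $\ell \ge N\tau$, use Lemma \ref{shortrange} to replace the full dynamics by the short-range semigroup at cost $N\tau/\ell$ times the a priori size of $F_0$, and use the finite speed of propagation Lemma \ref{finitespeed} to justify flattening and averaging the initial data $F_0 = F_{W_t}$ on a window of size $u$ with $N\tau \ll u \ll Nt$. The a priori input needed here is a high-probability bound on $F_0(\bm{\eta})$: this is the analogue of Corollary \ref{coro:normephi}, obtained from the averaged local law on $\frac{1}{N}\sum_{\alpha\in I} G(\tau,z)_{\alpha\alpha}$ in \eqref{eq:locallawdyson} (which already contains the $\widehat{I}$ factor driving $\Xi$), giving $\sum_{\alpha\in I} u_k(\alpha)^2 = \mathcal{O}_\prec(\widehat{I}/(Nt) + \cdots)$ and hence $|p_{ii}| \prec \Xi$, $|p_{ij}| \prec \Xi$ initially after the first two steps of the three-step strategy are combined. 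Then the maximum principle argument: pick the configuration $\bm{\xi}_m$ maximizing $g_\tau$, write the evolution at $\bm{\xi}_m$, bound the gain term using an averaged local law identity (the analogue of \eqref{npartlem}, where the key is that $\frac{1}{N}\sum_\alpha g_\tau(\bm{\eta}^{k_r,k})/(\lambda_k - z^{(k_r)})$ factorizes as $\mathrm{Im}(m_t)\sigma_t^2(\mathbf{e}_\cdot, k_r) f_\tau(\bm{\eta}\setminus k_r)$ plus errors), feed in the induction hypothesis on $\mathcal{N}(\bm{\eta})-1$ particles, and close via Gronwall. Choosing $\eta = N^{-\varepsilon}\tau$, $u = (N\tau (Nt)^2)^{1/3}$, $\ell = \sqrt{N\tau u}$ optimizes the accumulated errors to the stated $\Xi(\tau)$.

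The main obstacle is the non-equilibrium, position-dependent profile: unlike the flat QUE of \cite{bourgade2018band}, here the deterministic target $\frac{1}{N}\sum_{\alpha\in I}\sigma_t^2(\alpha,k,\tau)$ genuinely depends on the eigenvector index $k$ through $\gamma_{k,t}$, so the centering constants $C_w$ in \eqref{eq:pii} are only approximately $k$-independent on the window, and one must carefully bound the mismatch $|\sigma_t^2(\cdot, k_r) - \sigma_t^2(\cdot, i_0)| \lesssim w/(Nt^2)$ the way \eqref{sigma} and \eqref{sigma2} do, ensuring it is absorbed into $\Xi$. The second delicate point is that the long-range-versus-short-range dichotomy is crude, producing the suboptimal structure of $\Xi(\tau)$ with its $(\tau/t)$ (rather than, say, a power more favorable) term; one simply accepts this, as the remark after the theorem statement concedes. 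Everything else—the semigroup estimates, the reverse-heat-flow removal of the Gaussian component needed later, the local laws—is quoted. A careful bookkeeping of which error in \eqref{eq:locallawdyson} carries a $\widehat{I}$ and which does not is what ultimately produces the advertised $\Xi(\tau) = \widehat{I}/\sqrt{N\tau} + \widehat{I}\,\tau/t$ rather than a bound without the localization gain.
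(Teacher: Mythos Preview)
Your overall architecture---run the perfect matching observable $F_\tau$ through the flow \eqref{eq:discs2}, split $\mathscr{B}=\mathscr{S}+\mathscr{L}$, localize via flattening/averaging and Lemma~\ref{finitespeed}, close with a maximum principle, then invoke Lemma~\ref{lem:holder} and Markov---is correct and matches the paper. But several of the specifics you fill in are borrowed from the moment observable argument of Subsection~\ref{subsec:moment} and do not carry over; the paper's actual mechanism in Subsection~\ref{subsec:perfect} is different.

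First, the target for $F_\tau$ is \emph{zero}, not a deterministic product profile. The overlaps $p_{ii}$ in \eqref{eq:pii} are already centered by $C_w$, so what one proves is $F_\tau(\bm{\eta})=\mathcal{O}_\prec(\Xi(\tau)^{\mathcal{N}(\bm{\eta})})$ with no leading term. Your description of $F_\tau$ approaching $\prod_k(\sigma_t^2(\mathbf{e}_\cdot,i_0)|I|/N-\text{correction})^{\eta_k}$ confuses $F_\tau$ with the moment observable $f_\tau$.

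Second, the factorization you invoke---$\frac{1}{N}\sum_k g_\tau(\bm{\eta}^{k_r,k})/(\lambda_k-z^{(k_r)})\approx \mathrm{Im}(m_t)\,\sigma_t^2 f_\tau(\bm{\eta}\setminus k_r)$---is identity \eqref{418} for the moment observable and fails for $F_\tau$. For the perfect matching observable the analogous step is combinatorial: one splits the matchings on $\bm{\eta}_m^{j_p,k}$ into $\mathcal{G}^{(1)}$ (those containing the edge $\{(k,1),(k,2)\}$, yielding a factor $p_{kk}$ times a degree-$(n-1)$ monomial) and $\mathcal{G}^{(2)}$ (two cross-edges from $k$, yielding $p_{ki_q}p_{ki_{q'}}$ times a degree-$(n-2)$ monomial). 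The $p_{kk}$ sum is then controlled by the partial local law \eqref{eq:locallawdyson} together with the window mismatch bound $\lvert\mathrm{Im}\,g_\alpha(t,\xi)-\tfrac{\mathrm{Im}\,m_t(\xi)}{\mathrm{Im}\,m_t(z_0)}\mathrm{Im}\,g_\alpha(t,z_0)\rvert\prec w/t$, which is the actual source of the $\widehat{I}\,\tau/t$ piece of $\Xi(\tau)$.

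Third, the paper does \emph{not} close by induction on the particle number. Instead the lower-degree monomials from $\mathcal{G}^{(1)}$ and $\mathcal{G}^{(2)}$ are bounded via Lemma~\ref{lem:holder} by $(S_{J_{\mathrm{out}}}^{(0,\tau)})^{(n-1)/n}$ and $(S_{J_{\mathrm{out}}}^{(0,\tau)})^{(n-2)/n}$, producing the self-referential inequality of Proposition~\ref{prop:mainque} relating the supremum on $J_{\mathrm{in}}$ to that on a slightly larger $J_{\mathrm{out}}$ with a small prefactor. This is then iterated $\lceil 3/\varepsilon\rceil$ times over a nested sequence of intervals shrinking by $N^{-\varepsilon}Nw$ at each step, until $(S_{J_i})^{1/n}\le N^{3\varepsilon/2}\Xi(\tau)$ is forced. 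The parameters are $w=N^{\varepsilon}\tau$ and $\ell=N\sqrt{\tau w}$, not the $u=(N\tau(Nt)^2)^{1/3}$, $\ell=\sqrt{N\tau u}$ you imported from \eqref{eq:parameter}. An induction-on-$n$ scheme as you sketch is plausible once the correct $\mathcal{G}^{(1)}/\mathcal{G}^{(2)}$ decomposition is in place, but that is not the route taken here, and the parameter balance would differ.
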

As one can see in the statement of Theorem \ref{theo:resultint}, we will need the small time $\tau$ corresponding to the size of the Gaussian perturbation to be of order smaller than $t$ so that the eigenvalues and eigenvectors barely changed during that time. We will later choose a specific $\tau$ and optimize all our different parameters when using the reverse heat flow technique to remove this small Gaussian component. One of these parameters will be a cut-off for the dynamics as in Subsection \ref{subsec:moment}. Indeed, in order to use a maximum principle on the dynamics, we will split it in the same way: a short-range dynamics with generator $\mathscr{S}$ that will contain most of the information and a long-range part with generator $\mathscr{L}$ we need to control as in \cite{erdos2015gap} where $\mathscr{S}$ and $\mathscr{L}$ are defined respectively in \eqref{eq:defshort} and \eqref{eq:deflong}.
Lemma \ref{finitespeed} will help us localize the dynamics onto a small set of configurations. Now the following lemma says that most of the information of the dynamics is given by the short-range, bounding the difference between $\mathscr{B}$ and $\mathscr{S}$. It is analogous to Lemma 3.5 in \cite{bourgade2018random}. First define
$$S_I^{(u,v)}=\sup_{\bm{\xi}\subset  I,u\leqslant s\leqslant v}F_s(\bm{\xi}).$$

\begin{lemma}\label{lem:shortlong}
For any intervals $J_{\mathrm{in}}\subset \mathcal{A}_w(i_0)$ and $J_{\mathrm{out}}=\{i,\,d(i,J_{\mathrm{in}})\leqslant N^\varepsilon \ell\}\subset\mathcal{A}_{w}^\kappa(i_0)$ since we will take $\ell\ll Nw$, any configuration $\bm{\xi}$ such that $\mathcal{N}(\bm{\xi})=n$ supported on $J_{\mathrm{in}}$ and any $N^{-1}\ll\tau\ll w$ we have
\begin{equation}
\left\vert((U_\mathscr{B}(0,\tau)-U_\mathscr{S}(0,\tau))\Phi_t(\bm{\xi})\right\vert\leqslant N^\varepsilon \frac{N\tau}{\ell}\left(S_{J_{\mathrm{out}}}^{(0,\tau)}+\widehat{I}\frac{w}{t}\left(S_{J_{\mathrm{out}}}^{(0,\tau)}\right)^{\frac{n-1}{n}}+\frac{\widehat{I}}{\ell}\left(S_{J_{\mathrm{out}}}^{(0,\tau)}\right)^{\frac{n-2}{n}}\right)
\end{equation}
where $F_\tau$ is the perfect matching observable defined in \ref{lem:eq:perfobsdeformed}.
\end{lemma}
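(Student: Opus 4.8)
The plan is to follow the scheme of Lemma \ref{shortrange} from \cite{bourgade2016emf}, replacing the moment observable by the perfect matching observable $F$ and carefully tracking the $\widehat{I}$‑dependent sizes that arise out of equilibrium. Writing $F_s=U_\mathscr{B}(0,s)F_0$ and $\mathscr{L}_s=\mathscr{B}_s-\mathscr{S}_s$ for the long‑range generator \eqref{eq:deflong}, the Duhamel formula gives
$$\big(U_\mathscr{B}(0,\tau)-U_\mathscr{S}(0,\tau)\big)F_0=\int_0^\tau U_\mathscr{S}(s,\tau)\,\mathscr{L}_s F_s\,\D s .$$
Evaluating at $\bm{\xi}$ supported on $J_{\mathrm{in}}$ and using the finite speed of propagation of $\mathscr{S}$ (Lemma \ref{finitespeed}, together with the reversibility of $\mathscr{S}$ and any crude a priori bound on $\mathscr{L}_sF_s$ coming from $|p_{ij}|\leqslant 1$), only configurations $\bm{\eta}$ with $d(\bm{\eta},\bm{\xi})\leqslant N^\varepsilon\ell$ — hence supported on $J_{\mathrm{out}}$ — contribute, up to an error $\mathcal{O}_\prec(e^{-N^{\varepsilon/4}})$. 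It thus suffices to bound $(\mathscr{L}_sF_s)(\bm{\eta})$ for $\bm{\eta}\subset J_{\mathrm{out}}$ with $\mathcal{N}(\bm{\eta})=n$ and integrate over $s\in[0,\tau]$; in each term the site $i$ with $\eta_i>0$ lies in $J_{\mathrm{out}}$, while $j$ (with $|i-j|>\ell$) is arbitrary and in particular $\eta_j=0$ whenever $j\notin J_{\mathrm{out}}$.

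For the contributions in which the particle does not leave $J_{\mathrm{out}}$ — namely the $-F_s(\bm{\eta})$ part of $\mathscr{L}_s$ and the terms with $j\in J_{\mathrm{out}}$, for which $\bm{\eta}^{i,j}\subset J_{\mathrm{out}}$ — one bounds $|F_s(\bm{\eta})|$ and $|F_s(\bm{\eta}^{i,j})|$ directly by $S_{J_{\mathrm{out}}}^{(0,\tau)}$ (these are $n$‑particle configurations on $J_{\mathrm{out}}$), uses the rigidity bound $\tfrac1{N(\lambda_i-\lambda_j)^2}\leqslant\tfrac{CN}{(i-j)^2}$ (hypothesis \eqref{whatever}) and $\sum_{|i-j|>\ell}(i-j)^{-2}\leqslant C/\ell$, and obtains a contribution $\prec \tfrac{N}{\ell}S_{J_{\mathrm{out}}}^{(0,\tau)}$, i.e. $\tfrac{N\tau}{\ell}S_{J_{\mathrm{out}}}^{(0,\tau)}$ after integration. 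This is the first term.

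It remains to control $F_s(\bm{\eta}^{i,j})$ when $j\notin J_{\mathrm{out}}$, in which case $\bm{\eta}^{i,j}$ carries two vertices at the far site $j$ and $2(n-1)$ vertices on $J_{\mathrm{out}}$. Expanding over perfect matchings, I split according to how the two $j$‑vertices are paired. If they are matched to each other, a factor $p_{jj}=\sum_{\alpha\in I}u_j(\alpha)^2-C_w$ appears together with a product of $n-1$ overlaps supported on $J_{\mathrm{out}}$; by the averaged local law of Section \ref{sec:local} and the Lipschitz estimate \eqref{sigma}, $|p_{jj}|\prec\widehat{I}\,|\gamma_{j,t}-\gamma_{i_0,t}|/t\leqslant\widehat{I}\,(|\gamma_{j,t}-\gamma_{i,t}|+w)/t$ (since $i\in J_{\mathrm{out}}\subset\mathcal{A}^\kappa_{w}(i_0)$), so that by rigidity $\sum_{|i-j|>\ell}\tfrac{|p_{jj}|}{N(\lambda_i-\lambda_j)^2}\prec\widehat{I}\,\tfrac{Nw}{\ell t}$ — the contribution of $|\gamma_{j,t}-\gamma_{i,t}|$ produces only a further $\log N$ and is absorbed because $\ell\ll Nw$ — while Hölder's inequality together with Lemma \ref{lem:holder} (which gives $\mathds{E}[|p_{ab}|^n\,|\,\bm{\lambda}]\prec S_{J_{\mathrm{out}}}^{(0,\tau)}$ for $a,b\in J_{\mathrm{out}}$, hence $\|p_{ab}\|_n\prec (S_{J_{\mathrm{out}}}^{(0,\tau)})^{1/n}$) bounds the product of the remaining $n-1$ overlaps by $(S_{J_{\mathrm{out}}}^{(0,\tau)})^{(n-1)/n}$; this gives the second term $\tfrac{N\tau}{\ell}\,\widehat{I}\,\tfrac wt\,(S_{J_{\mathrm{out}}}^{(0,\tau)})^{(n-1)/n}$. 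If instead the two $j$‑vertices are matched to two distinct vertices at sites $k,k'\in J_{\mathrm{out}}$, the factors $p_{jk}p_{jk'}$ appear together with a product of $n-2$ overlaps on $J_{\mathrm{out}}$; summing over $j$ first and applying Cauchy–Schwarz together with $\sum_j p_{jk}^2\leqslant\sum_{\alpha\in I}u_k(\alpha)^2\prec\widehat{I}$ and the rigidity bound $\tfrac1{N(\lambda_i-\lambda_j)^2}\leqslant\tfrac{CN}{\ell|i-j|}$, one gets $\sum_{|i-j|>\ell}\tfrac{|p_{jk}p_{jk'}|}{N(\lambda_i-\lambda_j)^2}\prec\tfrac{N\widehat{I}}{\ell^2}$, and Hölder together with Lemma \ref{lem:holder} bounds the remaining $n-2$ overlaps by $(S_{J_{\mathrm{out}}}^{(0,\tau)})^{(n-2)/n}$; after integration this is the third term $\tfrac{N\tau}{\ell}\,\tfrac{\widehat{I}}{\ell}\,(S_{J_{\mathrm{out}}}^{(0,\tau)})^{(n-2)/n}$. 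Summing the three contributions proves the lemma.

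The Duhamel/finite‑speed skeleton is routine. The delicate point — and the main obstacle — is the far‑jump analysis: from the perfect‑matching expansion of $F_s(\bm{\eta}^{i,j})$ one must correctly peel off a product of $n-1$ (resp. $n-2$) overlaps living on $J_{\mathrm{out}}$ and bound it by the right fractional power of $S_{J_{\mathrm{out}}}^{(0,\tau)}$ via Hölder and Lemma \ref{lem:holder} (this is exactly what forces the exponents $\tfrac{n-1}{n}$ and $\tfrac{n-2}{n}$), while simultaneously extracting the correct $\widehat{I}$‑smallness of the residual factor, namely the re‑centering gain $\widehat{I}w/t$ for a self‑matched far particle and the orthogonality gain $\widehat{I}/\ell$ — coming from $\sum_j p_{jk}^2=\sum_{\alpha\in I}u_k(\alpha)^2$ — for a cross‑matched one. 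One also has to keep all the high‑probability a priori inputs (averaged local law, eigenvalue rigidity, $\sum_\alpha u_j(\alpha)^2\prec\widehat{I}$, boundedness of the overlaps) consistent through these Hölder splittings and dispose of the exceptional events where they fail; this is where most of the (essentially technical) work lies.
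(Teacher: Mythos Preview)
Your Duhamel/finite-speed skeleton and your treatment of the cross-matched ($\mathcal{G}^{(2)}$) case are fine and match the paper. The gap is in the self-matched ($\mathcal{G}^{(1)}$) case: the pointwise bound
\[
|p_{jj}|\prec \widehat{I}\,\frac{|\gamma_{j,t}-\gamma_{i_0,t}|}{t}
\]
is not an available a priori input. Recall $p_{jj}=\sum_{\alpha\in I}u_j(\alpha)^2-C_w$; the Lipschitz estimate \eqref{sigma} only controls the \emph{deterministic} quantity $\tfrac1N\sum_{\alpha\in I}\sigma_t^2(\alpha,j)-C_w$, whereas the random part $\sum_{\alpha\in I}u_j(\alpha)^2-\tfrac1N\sum_{\alpha\in I}\sigma_t^2(\alpha,j)$ is precisely the QUE statement you are ultimately trying to prove. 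The only pointwise bound at your disposal is the crude $|p_{jj}|\prec\widehat{I}$, which gives $\sum_{|i-j|>\ell}\tfrac{|p_{jj}|}{N(\lambda_i-\lambda_j)^2}\prec \tfrac{N\widehat{I}}{\ell}$ and is too large by the missing factor $w/t$.

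What the paper does instead is to avoid bounding $p_{kk}$ pointwise altogether and exploit cancellation in the \emph{sum}. One writes $\sum_{|k-i_p|>\ell}\tfrac{p_{kk}}{N(\lambda_{i_p}-\lambda_k)^2}=\partial_z f(\lambda_{i_p})$ with $f(z)=\sum_{|k-i_p|>\ell}\tfrac{p_{kk}}{N(z-\lambda_k)}$, and then applies Cauchy's formula on a rectangular contour of scale $\ell/N$ to express this derivative as a contour integral of $\mathrm{Im}\, g(\xi)$, where $g$ sums over all $k$ (up to a harmless boundary piece of $O(N^\varepsilon)$ eigenvalues). The key identity is
\[
\mathrm{Im}\sum_{k=1}^N\frac{p_{kk}}{N(\xi-\lambda_k)}
=\mathrm{Im}\frac{1}{N}\sum_{\alpha\in I}G_{\alpha\alpha}(\xi)-C_w\,\mathrm{Im}\,s_\tau(\xi),
\]
which converts the sum over eigenvalues into resolvent entries. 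Now the partial local law \eqref{eq:locallawdyson} and the Lipschitz bound on $g_\alpha(t,\cdot)$ over the window of size $w$ give $\prec \widehat{I}/\sqrt{N|\mathrm{Im}\xi|}+\widehat{I}\,w/t$, and integrating over the contour produces the factor $\tfrac{N}{\ell}(\widehat{I}/\ell+\widehat{I}\,w/t)$ that you need. This resolvent/contour step is the missing idea; without it the $\widehat{I}\,w/t$ gain cannot be extracted.
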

This bound is used in this form so we can obtain information on a box in space by extracting information from a larger box. Iterating this bound will give us Theorem \ref{theo:resultint}.
\begin{proof}
We will follow the proof from \cite{bourgade2018random}. Define the following flattening operator. For $f$ a function on configurations of $n$ particles and $\bm{\xi}$ such a configuration,
\begin{equation}
(\mathrm{Flat}_af)(\bm{\xi})=\left\{\begin{array}{ll}
										f(\bm{\xi})\quad\text{if}\quad\bm{\xi}\subset\left\{i,\,d(i,J_{\mathrm{in}})\leqslant a\right\},\\
										0\quad\mathrm{otherwise}.
									\end{array}
								\right.							
\end{equation}
 We make the functions vanish outside of a certain interval. We use now Duhamel's formula and write
 $$\left((U_\mathscr{S}(0,\tau)-U_\mathscr{B}(0,\tau)\Phi_t\right)(\bm{\xi})=\int_0^{\tau}U_{\mathscr{S}}(s,\tau)\mathscr{L}(s)F_s(\bm{\xi})\D s.$$
 
 Now, see that, by definition of the flattening operator and the fact that $\bm{\xi}$ is supported on $J_{\mathrm{in}},$
 $$d(\mathrm{Supp}\left(\mathscr{L}(s)F_s-\mathrm{Flat}_{N^\varepsilon \ell}(\mathscr{L}(s)F_s)\right),\bm{\xi})\geqslant N^{\varepsilon}\ell.$$
 With this bound, we can use the finite speed of propagation from Lemma \ref{finitespeed} and obtain, using that $U_\mathscr{S}$ is a contraction in $\ell^\infty$,
 $$\left\vert U_\mathscr{S}(s,\tau)\mathscr{L}(s)F_s(\bm{\xi})\right\vert\leqslant \max_{\tilde{\bm{\xi}}\subset J_{\mathrm{out}}}\left\vert\left(\mathrm{Flat}_{N^\varepsilon \ell}(\mathscr{L}(s)F_s)\right)(\tilde{\bm{\xi}})\right\vert+\mathcal{O}\left(e^{-cN^\varepsilon/2}\right). $$
Thus we need to control $\vert(\mathscr{L}(s)F_s)(\tilde{\bm{\xi}})\vert$ for $\tilde{\bm{\xi}}=\left\{(i_1,j_1),\dots(i_m,j_m)\right\}$ a configuration of $n\geqslant m$ particles supported in $J_{\mathrm{out}}$. 

We have, by definition of $\mathscr{L},$
$$\left\vert\mathscr{L}(s)F_s(\tilde{\bm{\xi}})\right\vert\leqslant C_n \left\vert\sum_{1\leqslant p\leqslant m}\sum_{\vert i_p-k\vert\geqslant \ell}\frac{F_s(\tilde{\bm{\xi}}^{i_p,k})}{N(\lambda_{i_p}-\lambda_k)^2}\right\vert+C_n\left\vert F_s(\tilde{\bm{\xi}})\right\vert\sum_{\substack{1\leqslant p\leqslant m}}\sum_{\vert i_p-k\vert>\ell}\frac{1}{N(\lambda_{i_p}-\lambda_k)^2}.$$
For the second term in the previous inequality, we can use rigidity estimates from Theorem \ref{theo:rigidity} and a dyadic decomposition and see that
\begin{equation}\label{eq:dyadic}
\sum_{k,\,\vert i_p-k\vert >\ell}\frac{1}{N(\lambda_{i_p}-\lambda_k)^2}\leqslant N^\varepsilon \frac{N}{\ell},
\end{equation}
so that we have the bound
\begin{equation}\label{eq:Lbound}
\left\vert \mathscr{L}(s)F_s(\tilde{\bm{\xi}})\right\vert\leqslant N^\varepsilon \left\vert\sum_{1\leqslant p \leqslant m}\sum_{k,\vert i_p-k\vert\geqslant \ell}\frac{F_s(\tilde{\bm{\xi}}^{i_p,k})}{N(\lambda_{i_p}-\lambda_k)^2}\right\vert+N^\varepsilon\frac{N}{\ell}S_{J_{\mathrm{out}}}^{(0,\tau)}.
\end{equation}
For the first sum in \eqref{eq:Lbound}, we will first restrict it to the sites $k$ such that there are no particles in the configuration $\tilde{\bm{\xi}}$, so that we will have $\tilde{{\eta}}^{i_p, k}_k=1$. Note that, if we have $\tilde{{\eta}}_k\neq 0$ then by definition of $\tilde{\bm{\xi}}$ supported on $J_{\mathrm{out}}$, $\tilde{\bm{\xi}}^{i_p,k}$ is also supported on $J_{\mathrm{out}}.$ This gives us the bound
\begin{align*}
\sum_{k,\,\vert i_p-k\vert>\ell}\frac{F_s(\tilde{\bm{\xi}}^{i_p,k})}{N(\lambda_{i_p}-\lambda_k)^2}&\leqslant\sum_{\substack{k,\,\vert i_p-k\vert>\ell\\\tilde{\eta}_k=0}}\frac{F_s(\tilde{\bm{\xi}}^{i_p,k})}{N(\lambda_{i_p-\lambda_k)^2}}+S_{J_{\mathrm{out}}}^{(0,\tau)}\sum_{\substack{k,\,\vert i_p-k\vert >\ell\\\tilde{\eta}_k\neq 0}}\frac{1}{N(\lambda_{i_p}-\lambda_k)^2}.\\
&\leqslant \sum_{\substack{k,\,\vert i_p-k\vert>\ell\\\tilde{\eta}_k=0}}\frac{F_s(\tilde{\bm{\xi}}^{i_p,k})}{N(\lambda_{i_p}-\lambda_k)^2}+CN^\varepsilon\frac{N}{\ell^2}S_{J_{\mathrm{out}}}^{(0,\tau)}
\end{align*}
where in the second inequality we used the rigidity of the eigenvalues, which gives us $(\lambda_{i_p}-\lambda_k)^2\geqslant C(\ell/N)^2$ for $\vert i_p-k\vert >\ell$, and the fact that there is at most $m$ sites $k$ such that $\tilde{\eta}_k\neq 0$. By definition of the perfect matching observables, we can write

$$\sum_{\substack{k,\,\vert i_p-k\vert>\ell\\\tilde{\eta}_k=0}}\frac{F_s(\tilde{\bm{\xi}}^{i_p,k})}{N(\lambda_{i_p}-\lambda_k)^2}=C(n)\mathds{E}\left[\sum_{\substack{k,\,\vert i_p-k\vert>\ell\\\tilde{\eta}_k=0}}\sum_{G\in\mathcal{G}_{\tilde{\bm{\xi}}^{i_p,k}}
}\frac{\prod_{e\in\mathcal{E}(G)}p(e)}{N(\lambda_{i_p}-\lambda_k)^2}\middle|\bm{\lambda}\right].$$
In order to control this term, we will consider two different types of perfect matchings. Define the following partition of $\mathcal{G}_{\bm{\xi}}$ into two subsets
\begin{align}
\label{eq:defg1}\mathcal{G}^{(1)}_{\bm{\xi}}&=\left\{G\in\mathcal{G}_{\bm{\xi}},\,\{(k,1),(k,2)\}\in\mathcal{E}(G)\right\},\\
\label{eq:defg2}\mathcal{G}^{(2)}_{\bm{\xi}}&=\left\{G\in\mathcal{G}_{\bm{\xi}},\,\{(k,1),(k,2)\}\notin\mathcal{E}(G)\right\}.
\end{align}

\begin{figure}[H]
\centering
\begin{subfigure}[t]{.5\textwidth}
  \centering
  \includegraphics[width=.7\linewidth]{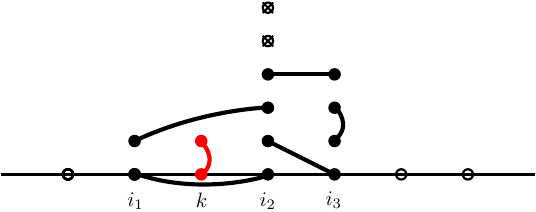}
  \caption{A perfect matching from $\mathcal{G}^{(1)}_{\bm{\xi}^{i_2,k}}$}
\end{subfigure}%
\begin{subfigure}[t]{.5\textwidth}
  \centering
  \includegraphics[width=.7\linewidth]{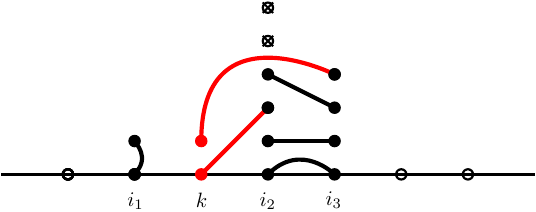}
  \caption{A perfect matching from $\mathcal{G}^{(2)}_{\bm{\xi}^{i_2,k}}$}
\end{subfigure}
\end{figure}

We will begin by bounding the contribution from \eqref{eq:defg1}. Note first that, for $G\in\mathcal{G}^{(1)}_{\bm{\xi}}$, we have
\begin{equation}\label{eq:prodg1}\prod_{e\in\mathcal{E}(G)}p(e)=p_{kk}\times Q_1((p(e))_{e\in\mathcal{E}(G)})
\end{equation}
\text{with}
$$Q_1((p(e))_{e\in\mathcal{E}(G)})=\prod_{e\in\mathcal{E}(G)\setminus\{(k,1),(k,2)\}}p(e).$$
See also that $Q_1$ is a monic monomial of degree $n-1$ so that we can use Lemma \ref{lem:holderdeformed} and obtain
\begin{equation}\label{eq:boundq1}
\mathds{E}\left[\sum_{G\in\mathcal{G}_{\bm{\xi}\setminus i_p}}Q_1((p(e))_{e\in\mathcal{E}(G)})\middle|\bm{\lambda}\right]\leqslant C\sup_{\substack{0\leqslant s \leqslant \tau\\\bm{\xi}\subset J_{\text{out}},\,\mathcal{N}(\bm{\xi})=n-1}}\left\vert F_s(\bm{\xi})\right\vert\leqslant C\left(S_{J_{\mathrm{out}}}^{(0,\tau)}\right)^{\frac{n-1}{n}}.
\end{equation}

Combining \eqref{eq:prodg1} and \eqref{eq:boundq1}, we now only need to bound
$$\sum_{\substack{k,\,\vert k-i_p\vert>l,\\\tilde{\eta}_k=0}}\frac{p_{kk}}{N(\lambda_{i_p}-\lambda_k)^2}=\sum_{k,\,\vert k-i_p\vert>\ell}\frac{p_{kk}}{N(\lambda_{i_p}-\lambda_k)^2}+\mathcal{O}\left(\frac{N}{\ell^2}\right).
$$
In order to bound the sum from the right hand side of the previous equation, first define the following functions, for $\vert z-\lambda_{i_p}\vert\leqslant N^{-\varepsilon}\ell/N,$
\begin{align*}
f(z)&=\sum_{k,\,\gamma_{t,k}\notin [E_1^-,E_2^+]}\frac{p_{kk}}{N(z-\lambda_k)},\\
g(z)&=\sum_{k,\,\gamma_{t,k}\notin [E_1^-,E_1^+]\cup[E_2^-,E_2^+]}\frac{p_{kk}}{N(z-\lambda_k)}
\end{align*} 
where $E_1=\gamma_{t,i_p-\ell},$ $E_1^-=\gamma_{t,i_p-\ell-N^\varepsilon}$, $E_1^+=\gamma_{t,i_p-\ell+N^{\varepsilon}},$ $E_2=\gamma_{t,i_p+\ell},$ $E_2^-=\gamma_{t,i_p+\ell-N^\varepsilon}$, $E_2^+=\gamma_{t,i_p+\ell+N^{\varepsilon}}.$ Let also $\Gamma$ be the rectangle with vertices $E_1\pm \mathrm{i}\ell/N$ and $E_2\pm \mathrm{i}\ell/N$. We therefore want to bound, up to a $N^\varepsilon$ term,
$$\sum_{k,\,\vert k-i_p\vert>\ell}\frac{p_{kk}}{N(\lambda_{i_p}-\lambda_k)^2}=\partial_z f(z)\Bigr\vert_{ z=\lambda_{i_p}}$$
Now consider $\mathcal{C}_{i_p}$ the circle centered in $\lambda_{i_p}$ with radius $N^{-\varepsilon}\frac{\ell}{N}$, then by Cauchy's formula, we can write,
$$\partial_zf(\lambda_{i_p})=\frac{1}{2\mathrm{i}\pi}\int_{\mathcal{C}_{i_p}}\frac{f(z)}{(z-\lambda_{i_p})^2}\mathrm{d}\xi.$$
By using another Cauchy integral formula on the contour $\Gamma$ for $f$ and seeing that for $\lambda_{\mathrm{int}}$, $z$ inside the contour and $\lambda_{\mathrm{ext}}$ outside of the contour we have, by a residue calculus,
$$\int_\Gamma \frac{\mathrm{d}\xi}{(\xi - \lambda_{\mathrm{int}})(\xi-\lambda_{i_p})}=\int_\Gamma \frac{\mathrm{d}\xi}{(\overline{\xi}-\lambda_\mathrm{ext})(\xi-\lambda_{i_p})}=0$$.
Note that we also have
\[
\frac{1}{2\pi}\int_\gamma\frac{p_{kk}}{(\xi-\lambda_{i_p})^2(\xi-\lambda_k)}
=
\mathcal{O}\left(
	\frac{1}{2\pi}\int_\gamma \frac{p_{kk}}{(\xi-\lambda_{i_p})^2(\overline{\xi}-\lambda_k)}
\right)
\]
so that we can write
\begin{equation}\label{eq:intcontour2}
\left\vert \partial_z f(\lambda_{i_p})\right\vert = \left\vert\frac{1}{2\pi}\int_{\Gamma}\frac{g(\xi)}{(\xi-\lambda_{i_p})^2}\mathrm{d}\xi\right\vert
=
\mathcal{O}\left(\left\vert\int_\Gamma\frac{\mathrm{Im} g(\xi)}{(\xi-\lambda_{i_p})^2}\mathrm{d}\xi\right\vert \right).
\end{equation}
We will first control the part of the contour closest to the real axis. Consider 
$$\Gamma_1=\{ z=E+\mathrm{i}\eta\in\Gamma,\,\vert \eta \vert < N^\varepsilon/N\},$$ as in \cite{bourgade2018random} and bounding $p_{kk}$ by $\widehat{I}$, we obtain
$$\left\vert \int_{\Gamma_1}\frac{\mathrm{Im}g(\xi)}{(\xi - \lambda_{i_p})^2}\D \xi\right\vert=\mathcal{O}\left(N^\varepsilon\frac{\widehat{I}N}{\ell^2}\right).$$
Now for the rest of the contour, note that we can add the missing eigenvalues to the total sum in $g$ up to adding an error of order $N^\varepsilon\widehat{I}/\ell.$ Finally we just have to bound
\begin{multline}\label{eq:intcontour}
\frac{N}{\ell}\int_{\Gamma\setminus \Gamma_1}\left\vert \mathrm{Im}\sum_{k=1}^N\frac{p_{kk}}{N(\xi-\lambda_k)}\right\vert\vert\mathrm{d}\xi\vert
\\=\frac{N}{\ell}\int_{\Gamma\setminus \Gamma_1}\left\vert\mathrm{Im}\frac{1}{N}\sum_{\alpha\in I}G_{\alpha\alpha}(\xi)-\frac{\mathrm{Im}m_t(\xi)}{\mathrm{Im}m_t(z_0)}\frac{1}{N}\sum_{\alpha\in I}\mathrm{Im}g_\alpha(t,z_0)\right\vert\vert\mathrm{d}\xi\vert
\end{multline}
where we used the definition of $p_{kk}$ and of $\sigma_t$ and defined $z_0=\gamma_{t,i_0}+\mathrm{i}\eta_0,$ with $\eta_0\ll t$ is the center of our window of size $w\ll t$ with positive imaginary part. Now, using the entrywise local law from Theorem \ref{entrylocal} and expanding between $z_0$ and $\xi$ since $\vert \xi-z_0\vert\leqslant w,$ we have
\begin{align*}
\left\vert\mathrm{Im}\frac{1}{N}\sum_{\alpha\in I}\right.&\left.G_{\alpha\alpha}(\xi)-\frac{\mathrm{Im}m_t(\xi)}{\mathrm{Im}m_t(z_0)}\frac{1}{N}\sum_{\alpha\in I}\mathrm{Im}g_\alpha(t,z_0)\right\vert\\
&\leqslant \left\vert \mathrm{Im}\frac{1}{N}\sum_{\alpha\in I}\left(G_{\alpha\alpha}(\xi)-g_\alpha(t,\xi)\right)\right\vert+\left\vert\frac{1}{N}\sum_{\alpha\in I}\left(\mathrm{Im}g_\alpha(t,\xi)-\frac{\mathrm{Im}m_t(\xi)}{\mathrm{Im}m_t(z_0)}\mathrm{Im}g_\alpha(t,z_0)\right)\right\vert\\
&\leqslant N^\varepsilon\frac{\widehat{I}}{\sqrt{N\vert\mathrm{Im}\xi\vert}}+N^\varepsilon\widehat{I}\frac{w}{t}.
\end{align*}
Injecting this bound in the contour integral, we get the following bound.
$$\eqref{eq:intcontour}\leqslant N^\varepsilon\widehat{I}\frac{w}{t}+N^\varepsilon\frac{\widehat{I}}{\sqrt{N}}.$$ 
Finally, putting all the contributions together and coming back to \eqref{eq:intcontour2}, we obtain
\begin{equation}
\vert \partial_{z}f(\lambda_{i_p})\vert \leqslant N^\varepsilon \frac{N}{\ell}\left(\frac{\widehat{I}}{\ell}+\widehat{I}\frac{w}{t}\right).
\end{equation}

Consider now the contribution from \eqref{eq:defg2}, first see that for $G\in\mathcal{G}_{\bm{\xi}}^{(2)}$, there exists $q_1$ and $q_2$ in $\{1,\dots m\},$ such that
$$\prod_{e\in\mathcal{E}(G)}p(e)=p_{ki_{q_1}}p_{ki_{q_2}}\times Q_2((p(e))_{e\in\mathcal{E}(G)})$$
with $Q_2$ a monic monomials of degree $n-2$.
Then using  Lemma \ref{lem:holderdeformed}, we can bound
$$\mathds{E}\left[\sum_{G\in\mathcal{G}'}Q_2((p(e))_{e\in\mathcal{E}(G)})\middle\vert\bm{\lambda}\right]=\mathcal{O}\left(\left(S_{J_{out}}^{(0,\tau)}\right)^{\frac{(n-2)}{n}	}\right).$$

Besides, we can bound the term with the cross-edges in the following way
$$\sum_{\substack{k,\,\vert k-i_p\vert>\ell\\\tilde{\eta}_k=0}}\frac{p_{i_{q_1}k}p_{i_{q_2}}}{N(\lambda_{i_p}-\lambda_{k})^2}\leqslant \frac{N}{\ell^2}\sum_{k=1}^N(p_{i_{q_1}k}^2+p_{i_{q_2}k}^2)=\frac{N}{\ell^2}\sum_{\alpha\in I}\left(u_{q_1}(\alpha)^2+u_{q_2}(\alpha)^2\right)\leqslant\widehat{I}\frac{N}{\ell^2}.$$
Putting everything together, we get
$$\left\vert((U_\mathscr{B}(0,\tau)-U_\mathscr{S}(0,\tau))\Phi_t(\bm{\xi})\right\vert\leqslant N^\varepsilon \frac{N\tau}{\ell}\left(S_{J_{\mathrm{out}}}^{(0,\tau)}+\widehat{I}\frac{w}{t}\left(S_{J_{\mathrm{out}}}^{(0,\tau)}\right)^{\frac{n-1}{n}}+\frac{\widehat{I}}{\ell}\left(S_{J_{\mathrm{out}}}^{(0,\tau)}\right)^{\frac{n-2}{n}}\right)$$
which is exactly the result wanted.
\end{proof}
We will first prove the following proposition in order to deduce Theorem \ref{theo:resultQUE}.
\begin{proposition}\label{prop:mainque}
For any $\varepsilon$ and $N$ large enough the following holds. For any intervals $J_{\mathrm{in}}\subset\mathcal{A}_{r}^\kappa$ and $J_{\mathrm{out}}\subset\{i,\,d(i,J_{\mathrm{in}})\leqslant N^{-\varepsilon}Nw\}$ we have
\begin{multline}
S_{J_{\mathrm{in}}}^{(0,\tau)}\leqslant N^\varepsilon\left(\frac{\ell}{Nw}+\frac{N\tau}{\ell}+\frac{1}{N\tau}\right)S_{J_{\mathrm{out}}}^{(0,\tau)}
+
N^\varepsilon\left(\frac{\widehat{I}}{\sqrt{N\tau}}+\widehat{I}\frac{w}{t}\right)(S_{J_{\mathrm{out}}}^{(0,\tau)})^{\frac{n-1}{n}}
\\+
N^\varepsilon\left(\frac{\widehat{I}}{N\tau}+\widehat{I}\frac{N\tau}{\ell^2}\right)(S_{J_{\mathrm{out}}}^{(0,\tau})^{\frac{n-2}{n}}.
\end{multline}
\end{proposition}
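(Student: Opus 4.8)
The plan is to prove Proposition \ref{prop:mainque} by the same localized maximum principle used for Proposition \ref{prop:moment}, now applied to the perfect matching observable $F_s$, which satisfies the flow \eqref{eq:discs2}. Since the diagonal overlaps were centered by $C_w$ in \eqref{eq:pii}, the deterministic profile that $F_s$ should approach on a box of width $w$ around $i_0$ is, up to an error of size $\widehat I\,w/t$ coming from the Lipschitz bound on $\sigma_t^2$ over such a window (as in \eqref{sigma}), identically zero; we therefore only need an upper bound on $F_s$, with no subtracted profile. This box-to-box estimate will then be iterated, starting from the trivial a priori bound on $F$ on a macroscopic box and shrinking $w$, to deduce Theorem \ref{theo:resultint}.

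\textbf{Localization.} For a configuration $\bm\xi$ of $n$ particles supported on $J_{\mathrm{in}}$, let $g_s$ solve the short-range dynamics $\partial_s g_s=\mathscr S_s g_s$ with initial datum $g_0=(\mathrm{Av}\,\mathrm{Flat}\,F_0)$, where $\mathrm{Flat}$ sets the observable to $0$ outside a boundary layer of width between $N^\varepsilon\ell$ and $N^{-\varepsilon}Nw$ around $J_{\mathrm{in}}$ and $\mathrm{Av}$ averages the flattening radius. By Lemma \ref{lem:shortlong}, the difference $|(U_\mathscr{B}(0,s)-U_\mathscr{S}(0,s))F_0(\bm\xi)|$ is controlled by the $\tfrac{N\tau}{\ell}$-family of terms appearing in the statement; by the finite speed of propagation (Lemma \ref{finitespeed}), the averaging error $|[U_\mathscr{S}(0,s)(\mathrm{Id}-\mathrm{Av})F_0](\bm\xi)|$ is at most $\tfrac{\ell}{Nw}S_{J_{\mathrm{out}}}^{(0,\tau)}$ up to $\mathcal O(e^{-cN^\varepsilon})$, since the two data disagree only outside a window of index width $\asymp Nw$. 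It thus suffices to bound $g_s$ on $J_{\mathrm{in}}$.

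\textbf{Maximum principle.} Fix a spectral scale $\eta_0\asymp\tau$, let $\bm{\xi_m}$ with $\mathcal N(\bm{\xi_m})=n$ maximize $g_s$, and note by finite speed of propagation that $\bm{\xi_m}$ lies in the enlarged box unless $g_s(\bm{\xi_m})=\mathcal O(N^{-D})$. Writing $\partial_s g_s(\bm{\xi_m})$ via the short-range generator and using maximality produces a gain term $-\tfrac{c}{\eta_0}\sum_r g_s(\bm{\xi_m})\,\mathrm{Im}\,m_t(\gamma_{k_r,t}+\mathrm i\eta_0)$ and source terms built from $\tfrac1N\sum_k \tfrac{F_s(\bm{\xi_m}^{k_r,k})}{\lambda_k-z^{(k_r)}}$ with $z^{(k_r)}=\gamma_{k_r,t}+\mathrm i\eta_0$. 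Splitting the sites $k$ according to whether $\bm{\xi_m}$ has a particle there, each contribution factors as in the proof of Lemma \ref{lem:shortlong}: either a loop $p_{kk}\times Q_1$ with $Q_1$ monic of degree $n-1$, or a cross $p_{ki_{q_1}}p_{ki_{q_2}}\times Q_2$ with $Q_2$ monic of degree $n-2$. Lemma \ref{lem:holder} bounds $\mathds E[\sum Q_1\mid\bm\lambda]$ and $\mathds E[\sum Q_2\mid\bm\lambda]$ by $(S_{J_{\mathrm{out}}}^{(0,\tau)})^{(n-1)/n}$ and $(S_{J_{\mathrm{out}}}^{(0,\tau)})^{(n-2)/n}$; the weighted sums of $p_{kk}$ are then estimated by a Cauchy integral together with the partial local law \eqref{eq:locallawdyson} for $\tfrac1N\sum_{\alpha\in I}G_{\alpha\alpha}$ at scale $\asymp\tau$, producing the $\widehat I/\sqrt{N\tau}$ and $\widehat I\,w/t$ factors, while the cross sums are bounded crudely by $\tfrac{N}{\ell^2}\widehat I$, producing the $\widehat I\,N\tau/\ell^2$ and $\widehat I/(N\tau)$ factors. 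Feeding this into $\partial_s S_s\le-\tfrac{c}{\eta_0}S_s+(\mathrm{errors})/\eta_0$ with $S_s=g_s(\bm{\xi_m})$ and applying Gronwall gives the bound; the same argument for the minimizer finishes the proof.

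\textbf{Main obstacle.} The delicate part is the source-term estimate: one must separate the contour into the piece $\Gamma_1$ near the real axis (handled by $p_{kk}\le\widehat I$) and the remainder (handled by the partial local law and a Taylor expansion of $g_\alpha(t,\cdot)$ across the window of width $w$, contributing the $\widehat I\,w/t$ term), control the rigidity errors in replacing $\lambda_{k_r}$ by $\gamma_{k_r,t}$, and check that after dividing by $\eta_0\asymp\tau$ in the Gronwall step every contribution collapses into exactly one of the three groups of terms in the statement. A subsidiary check is that taking the target profile to be $0$ rather than the true $i$-dependent expectation costs only $\widehat I\,w/t$, which follows from the Lipschitz bound on $\sigma_t^2$ within a window of width $w$.
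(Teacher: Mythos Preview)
Your approach is essentially the paper's, and all the key ingredients are present: localization via averaging/flattening, the short--long range split through Lemma \ref{lem:shortlong}, a maximum principle on the short-range flow, the loop/cross decomposition of the perfect matchings, Lemma \ref{lem:holder} for the lower-order polynomials, the partial local law \eqref{eq:locallawdyson} for the $p_{kk}$ sums, and Gronwall with $\eta\asymp\tau$.

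There is however one organizational slip that hides a genuine step. In your Localization paragraph you claim that $|[U_\mathscr{S}(0,s)(\mathrm{Id}-\mathrm{Av})F_0](\bm\xi)|\prec\tfrac{\ell}{Nw}S_{J_{\mathrm{out}}}^{(0,\tau)}$ for $\bm\xi\subset J_{\mathrm{in}}$. In fact this quantity is $\mathcal O(e^{-cN^\varepsilon})$: since the averaging is over flattening radii $a\in[\tfrac{1}{3}N^{-\varepsilon}Nw,\tfrac{2}{3}N^{-\varepsilon}Nw]$, the difference $(\mathrm{Id}-\mathrm{Av})F_0$ vanishes on all configurations within distance $\tfrac{1}{3}N^{-\varepsilon}Nw\gg N^\varepsilon\ell$ of $J_{\mathrm{in}}$, and finite speed of propagation finishes it. The $\tfrac{\ell}{Nw}$ term actually appears \emph{inside} the maximum principle, at the step you gloss over: the short-range generator gives source terms in $g_s(\bm{\xi_m}^{k_r,k})$, not $F_s(\bm{\xi_m}^{k_r,k})$, and one must convert back via the three-term splitting
\[
g_s=U_\mathscr{S}\mathrm{Av}F_0\;\longrightarrow\;\mathrm{Av}\,U_\mathscr{S}F_0\;\longrightarrow\;\mathrm{Av}\,U_\mathscr{B}F_0=\mathrm{Av}\,F_s\;\longrightarrow\;a_{\bm{\eta_m}}F_s.
\]
The commutator $U_\mathscr{S}\mathrm{Av}-\mathrm{Av}\,U_\mathscr{S}$ is supported (up to exponential tails) on a boundary layer of width $\sim N^\varepsilon\ell$ out of an averaging range $\sim N^{-\varepsilon}Nw$, which is where $\tfrac{\ell}{Nw}S$ comes from; the second step invokes Lemma \ref{lem:shortlong} again and gives the $\tfrac{N\tau}{\ell}$-family; the third uses $|a_{\bm{\eta_m}^{k_r,k}}-a_{\bm{\eta_m}}|\le C N^\varepsilon\ell/(Nw)$.

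Two smaller corrections. First, in the maximum principle the weighted loop sum $\sum_k p_{kk}/[N(z_{j_p}-\lambda_k)]$ has a \emph{linear} denominator and is bounded directly by the partial local law plus a Taylor expansion of $g_\alpha(t,\cdot)$ across the window; no Cauchy contour is needed there (the contour argument belongs to the proof of Lemma \ref{lem:shortlong}, where the denominator is squared). Second, the cross sum in the maximum principle is $\sum_k p_{i_{q_1}k}p_{i_{q_2}k}/[N(z_{j_p}-\lambda_k)]$, bounded by $\tfrac{1}{N\eta}\sum_k(p_{i_{q_1}k}^2+p_{i_{q_2}k}^2)\prec\widehat I/(N\eta)$, which after Gronwall gives $\widehat I/(N\tau)$; the $\widehat I\,N\tau/\ell^2$ contribution comes separately from Lemma \ref{lem:shortlong}.
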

\begin{proof}
We will use the short-range dynamics and its finite speed of propagation property in order to localize the maximum principle in $J_{\mathrm{in}}$. We will then use the local laws and Lemma \ref{lem:shortlong} in order to get a Gronwall type bound. Define the following averaging operator. For $f$ a function on configurations and $\bm{\xi}$ a configuration,
\begin{equation}
\mathrm{Av}(f)=\frac{3N^\varepsilon}{Nw}\sum_{\frac{1}{3}\frac{Nw}{N^\varepsilon}<a<\frac{2}{3}\frac{Nw}{N^\varepsilon}}\mathrm{Flat}_a(f).
\end{equation}
Note that, if $\bm{\xi}$ is not included in $J_{\mathrm{out}}$, by definition of the flattening operator, $(\mathrm{Av}(f))(\bm{\xi})=0$. The purpose of these operators is to change the initial condition in order to remove the particles far from the initial interval $J_{\mathrm{in}}$. Note also that we can write, for any $\bm{\xi}$,
$$\mathrm{Av}(f)(\bm{\xi})=a_{\bm{\xi}}f(\bm{\xi}),$$
with $a_{\bm{\xi}}\in[0,1]$. Note that we have the elementary bound $\left\vert a_{\bm{\xi}}-a_{\bm{\xi}}\right\vert\leqslant {CN^\varepsilon}/{Nw}$. Define now the following dynamics
\begin{equation}
\left\{\begin{array}{ll}
			\partial_s \Gamma_s=\mathscr{S}(s)\Gamma_s,\quad 0\leqslant s\leqslant \tau \\
			\Gamma_0(\bm{\xi})=(\mathrm{Av}\,\Phi_t)(\bm{\xi}).

		\end{array}\right.
\end{equation}

Now, if one takes a configuration $\bm{\xi}$ supported on $J_{\mathrm{in}}$, it suffices to show the bound in Proposition \ref{prop:mainque} for $\Gamma$. Indeed
\begin{align*}
\left\vert F_\tau(\bm{\xi})-\Gamma_\tau(\bm{\xi})\right\vert&\leqslant\left\vert \left(U_\mathscr{B}(0,\tau)\Phi_t-U_\mathscr{S}(0,\tau)\Phi_t\right)(\bm{\xi})\right\vert+\left\vert U_\mathscr{S}(0,\tau)(\Phi_t-\mathrm{Av}\Phi_t)(\bm{\xi})\right\vert\\
&\leqslant N^\varepsilon\frac{CN\tau}{\ell}\left(S_{J_{\mathrm{out}}}^{(0,\tau)}+\widehat{I}\frac{w}{t}\left(S_{J_{\mathrm{out}}}^{(0,\tau)}\right)^{\frac{n-1}{n}}+\frac{\widehat{I}}{\ell}\left(S_{J_{\mathrm{out}}}^{(0,\tau)}\right)^{\frac{n-2}{n}}\right)+\exp(-cN^\varepsilon)
\end{align*}
where we bounded the first term by using Lemma \ref{lem:shortlong} and the second term using the finite speed of propagation. Indeed, since $\bm{\xi}$ is supported on $J_\mathrm{in}$, $(\mathrm{Id}-\mathrm{Av})\Phi_t$ vanishes for any configuration supported on $J_{\mathrm{out}}$. Note that we can use Lemma \ref{lem:shortlong} since we will take $\ell/N\ll w$. In the rest of the proof, we will prove the bound from Proposition \ref{prop:mainque} for $\Gamma_\tau$. If we already have for some $C>0$,
$$\Gamma_\tau(\bm{{\eta_m}}):=\sup_{\bm{\xi},\,\,\mathcal{N}(\bm{\xi})=n}\Gamma_\tau(\bm{\xi})\leqslant N^{-C}$$
then we have nothing to prove by the argument above and the definition of $F_\tau$. However, if this supremum is greater than $N^{-C}$, then by the finite speed of propagation of $\mathscr{S}$, we know that $\bm{{\eta_m}}$ will be supported in, for instance, $\left\{i,\,d(i,J_{\mathrm{in}})\leqslant\frac{3Nw}{4N^\varepsilon}\right\}$.

Consider now, a parameter $\eta$ that we will choose later and denote also $m\leqslant n$ the number of sites with at least a particle and $j_1,\dots,j_m$ those sites. Then, we can write
\begin{align}
\partial_\tau &\Gamma_\tau(\bm{\eta_m})=\sum_{0<\vert j-k\vert\leqslant \ell}\frac{2\eta_{m,k}(1+2\eta_{m,j})\left(\Gamma_\tau\left(\bm{\eta_m^{k,j_p}}\right)-\Gamma_\tau\left(\bm{\eta_m}\right)\right)}{N(\lambda_k(t+\tau)-\lambda_j(t+\tau))^2}\\
&\leqslant \frac{C}{N\eta}\sum_{\substack{1\leqslant p\leqslant m\\k,\,0<\vert j_p-k\vert\leqslant\ell}}\frac{\eta(\Gamma_\tau\left(\bm{\eta_m^{k,j_p}}\right)-\Gamma_\tau(\bm{\eta_m}))}{(\lambda_k-\lambda_{j_p})^2+\eta^2}\\
&\label{eq:maxprincque}\leqslant-\frac{C}{N\eta}\sum_{\substack{1\leqslant p\leqslant m\\k,\,0<\vert j_p-k\vert\leqslant\ell}}\mathrm{Im}\left(\frac{\Gamma_\tau\left(\bm{\eta_m^{k,j_p}}\right)}{\lambda_k-z_{j_p}}\right)-\frac{1}{N\eta}\Gamma_\tau(\bm{\eta_m})\sum_{\substack{1\leqslant p\leqslant m\\k,\,0<\vert j_p-k\vert<\ell}}\mathrm{Im}\left(\frac{1}{z_{j_p}-\lambda_k}\right)
\end{align}
with $z_{j_p}=\lambda_{j_p}+i\eta$. For the second term, see that for $p\in[\![1,m]\!]$, if we choose $\eta$ to be smaller than $\ell/N$,
$$\#\left\{k,\,0<\vert j_p-k\vert\leqslant \ell\right\}\geqslant C\ell\geqslant CN\eta\geqslant C\#\left\{k,\,\vert\lambda_{j_p}-\lambda_{k}\vert\leqslant \eta\right\}\geqslant C'N\eta$$
where we used, in the two last inequalities, the rigidity of the eigenvalues. Now we can write
\begin{align*}
\sum_{\substack{1\leqslant p\leqslant m\\k,\,0<\vert j_p-k\vert\leqslant\ell}}\mathrm{Im}\left(\frac{1}{z_{j_p}-\lambda_k}\right)\geqslant \sum_{\substack{1\leqslant p\leqslant m\\k,\,0<\vert \lambda_{j_p}-\lambda_k\vert\leqslant \eta}}\frac{\eta}{(\lambda_{j_p}-\lambda_{k})^2+\eta^2}\geqslant CN.
\end{align*}

We now need to control the first term in \eqref{eq:maxprincque}. To do so, we will split it in the three following terms:
\begin{align}
\label{eq:term1}\phantom{+}&\mathrm{Im}\left(\sum_{k,\,0<\vert j_p-k\vert\leqslant\ell}\frac{\left(U_\mathscr{S}(0,\tau)\mathrm{Av}\Phi_t\right)(\bm{\eta_m^{j_p,k}})-\left(\mathrm{Av}U_\mathscr{S}(0,\tau)\Phi_t\right)(\bm{\eta_m^{j_p,k}})}{N(z_{j_p}-\lambda_k)}\right)\\
\label{eq:term2}+&\mathrm{Im}\left(\sum_{k,\,0<\vert j_p-k\vert\leqslant\ell}\frac{\left(\mathrm{Av}U_\mathscr{S}(0,\tau)\Phi_t\right)(\bm{\eta_m^{j_p,k}})-\left(\mathrm{Av}U_\mathscr{B}(0,\tau)\Phi_t\right)(\bm{\eta_m^{j_p,k}})}{N(z_{j_p}-\lambda_k)}\right)\\
\label{eq:term3}+&\mathrm{Im}\left(\sum_{k,\,0<\vert j_p-k\vert\leqslant\ell}\frac{\left(\mathrm{Av}U_\mathscr{B}(0,\tau)\Phi_t\right)(\bm{\eta_m^{j_p,k}})}{N(z_{j_p}-\lambda_k)}\right)
\end{align} 

To bound the first term, we will use the finite speed of propagation property of $\mathscr{S}$ from Lemma \ref{finitespeed}. Indeed, we can write
$$\left(U_\mathscr{S}(0,\tau)\mathrm{Av}\Phi_t\right)(\bm{\eta_m^{j_p,k}})-\left(\mathrm{Av}U_\mathscr{S}(0,\tau)\Phi_t\right)(\bm{\eta_m^{j_p,k}})=\frac{3N^\varepsilon}{Nw}\sum_{\frac{1}{3}\frac{Nw}{N^\varepsilon}<a<\frac{2}{3}\frac{Nw}{N^\varepsilon}}\mathscr{U}_a\left(\bm{\eta_m^{j_p,k}}\right)$$
with 
$$\mathscr{U}_a=U_\mathscr{S}(0,\tau)\mathrm{Flat_a}\Phi_t-\mathrm{Flat}_a U_\mathscr{S}(0,\tau)\Phi_t.$$
Fix $a$ and consider three cases, if $\bm{\eta_m^{j_p,k}}$ is supported on $\left\{i,\,d(i,J_\mathrm{In})>a+N^\varepsilon \ell \right\}$ then by definition of $\mathrm{Flat_a}$ we have
$$\mathrm{Flat}_aU_\mathscr{S}(0,\tau)\Phi_t=0,$$
and by Lemma \ref{finitespeed} we have
$$\left\vert U_\mathscr{S}(0,\tau)\mathrm{Flat}_a\Phi_t\right\vert\leqslant \exp\left(-\frac{cN^\varepsilon}{2}\right).$$
Now if $\bm{\eta_m^{j_p,k}}$ is supported on $\left\{i,\,d(i,J_{\mathrm{In}})\leqslant a-N^\varepsilon \ell\right\}$ then again by definition of $\mathrm{Flat_a}$,
\begin{align*}
\mathrm{Flat}_a \left(U_\mathscr{S}(0,\tau)\Phi_t\right)\left(\bm{\eta_m^{j_p,k}}\right)=\left(U_\mathscr{S}(0,\tau)\Phi_t\right)\left(\bm{\eta_m^{j_p,k}}\right).
\end{align*}
Thus
\begin{align*}
\left\vert\mathscr{U}_a\left(\bm{\eta_{m}^{j_p,k}}\right)\right\vert\leqslant \left\vert U_\mathscr{S}(0,\tau)\left(\Phi_t-\mathrm{Flat}_a\Phi_t\right)\left(\bm{\eta_m^{j_p,k}}\right)\right\vert\leqslant \exp\left(-\frac{cN^\varepsilon}{2}\right).
\end{align*}
Finally, if $\bm{\eta_m^{j_p,k}}$ is supported on $\left\{i,\,d(i,J_{\mathrm{In}})\leqslant a+\ell N^\varepsilon\right\}$, first note that there can only be $2n\ell N^\varepsilon$ such $a$, then one can see that we can use the finite speed of propagation if we remove particle away from $a$ at distance $2\ell N^\varepsilon$ for instance, then
\begin{align*}
\left\vert\mathscr{U}_a\left(\bm{\eta_{m}^{j_p,k}}\right)\right\vert&\leqslant\left\vert \mathrm{Flat}_aU_\mathscr{S}(0,\tau)\Phi_t\right\vert+\left\vert\mathrm{Flat}_{a}U_\mathscr{S}(0,\tau)\mathrm{Flat}_{a+2\ell N^\varepsilon}\Phi_t\right\vert\\
&+\left\vert\mathrm{Flat_a}U_\mathscr{S}(0,\tau)\left(\Phi_t-\mathrm{Flat}_{a+2\ell N^\varepsilon}\right)\right\vert\\
&\leqslant \norme{\mathrm{Flat}_a\Phi_t}_\infty+\norme{\mathrm{Flat}_{a+2\ell N^\varepsilon}}_\infty+\exp\left(-\frac{cN^\varepsilon}{2}\right)\\
&\leqslant 2S_{J_{\mathrm{out}}}^{(0,\tau)}+\exp\left(-\frac{cN^\varepsilon}{2}\right),
\end{align*}
where we used that $U_\mathscr{S}$ is a contraction in $\Vert\Vert_\infty$.
Finally we can bound \eqref{eq:term1},
\begin{equation}
\eqref{eq:term1}\leqslant N^\varepsilon\frac{\ell}{Nw}S_{J_{\mathrm{out}}}^{(0,\tau)}+\exp\left(-\frac{cN^\varepsilon}{2}\right)
\end{equation}
where we used the fact that
\begin{equation}\label{eq:boundstiel}\left\vert\frac{1}{N}\mathrm{Im}\left(\sum_{k, 0<\vert j_p-k\vert<\ell}^N\frac{1}{z_{j_p}-\lambda_k}\right)\right\vert\leqslant \left\vert \varsigma\left(\tau,z_{j_p}\right)\right\vert\leqslant N^\varepsilon
\end{equation}
where the Stieltjes transform $\varsigma$ is defined in \eqref{eq:defstielt}.
For $\eqref{eq:term2},$ we will use Lemma \ref{lem:shortlong}. Indeed, first note that in the short-range regime, the set of $k$ such that $\vert j_p-k\vert\leqslant \ell$ is included in $\mathcal{A}_{r}^{2\kappa}.$ Then we can bound
\begin{equation}
\eqref{eq:term2}\leqslant N^\varepsilon\frac{N\tau}{\ell}\left(S_{J_{\mathrm{out}}}^{(0,\tau)}+\widehat{I}\frac{w}{t}\left(S_{J_{\mathrm{out}}}^{(0,\tau)}\right)^{\frac{n-1}{n}}+\frac{\widehat{I}}{\ell}\left(S_{J_{\mathrm{out}}}^{(0,\tau)}\right)^{\frac{n-2}{n}}\right)
\end{equation}
where we used the fact that $\mathrm{Av}$ is a contraction and $\eqref{eq:boundstiel}$.

Finally, in order to bound the third term $\eqref{eq:term3}$, we will use the local law for $\Phi_t$. First write
\begin{align}
\mathrm{Im}&\left(\sum_{k,\,0<\vert j_p-k\vert\leqslant \ell}\frac{\left(\mathrm{Av}U_\mathscr{B}(0,\tau)\Phi_t\right)\left(\bm{\eta_m^{j_p,k}}\right)}{N(z_{j_p}-\lambda_k)}\right)=\mathrm{Im}\left(\sum_{k,\,0<\vert j_p-k\vert\leqslant \ell}\frac{a_{\bm{\eta_m^{j_p,k}}}F_\tau\left(\bm{\eta_m^{j_p,k}}\right)}{N(z_{j_p}-\lambda_k)}\right)\nonumber\\
&=\mathrm{Im}\left(\sum_{k,\,0<\vert j_p-k\vert\leqslant \ell}\frac{a_{\bm{\eta_m}}F_\tau\left(\bm{\eta_m^{j_p,k}}\right)}{N(z_{j_p}-\lambda_k)}\right)+\mathrm{Im}\left(\sum_{k,\,0<\vert j_p-k\vert\leqslant \ell}\frac{\left(a_{\bm{\eta_m^{j_p,k}}}-a_{\bm{\eta_m}}\right)F_\tau\left(\bm{\eta_m^{j_p,k}}\right)}{N(z_{j_p}-\lambda_k)}\right)\nonumber\\
\label{eq:term31}&=\mathrm{Im}\left(\sum_{k,\,0<\vert j_p-k\vert\leqslant \ell}\frac{a_{\bm{\eta_m}}F_\tau\left(\bm{\eta_m^{j_p,k}}\right)}{N(z_{j_p}-\lambda_k)}\right)+\mathcal{O}\left(N^\varepsilon\mathds{1}_{\vert j_p-k\vert\leqslant \ell}\,{\left\vert a_{\bm{\eta_m}}-a_{\bm{\eta_m^{j_p,k}}}\right\vert}S_{J_\mathrm{out}}^{(0,\tau)}\right).
\end{align}
But we have the bound, for $\vert j_p-k\vert\leqslant \ell$,
$$\left\vert a_{\bm{\eta_m}}-a_{\bm{\eta_m^{j_p,k}}}\right\vert\leqslant \frac{N^\varepsilon d\left(\bm{\eta_m^{j_p,k}},\bm{\eta_m}\right)}{Nw}\leqslant \frac{N^\varepsilon\ell}{Nw}.$$

In order to bound the first term, see first that we can remove the contributions of $k\in\{j_1,\dots,j_p\}$ writing
$$\mathrm{Im}\left(\sum_{k,\,0<\vert j_p-k\vert\leqslant \ell}\frac{a_{\bm{\eta_m}}F_\tau\left(\bm{\eta_m^{j_p,k}}\right)}{N(z_{j_p}-\lambda_k)}\right)=\mathrm{Im}\left(\sum_{\substack{k,\,0<\vert j_p-k\vert\leqslant \ell\\k\notin\{j_1,\dots,j_p\}}}\frac{a_{\bm{\eta_m}}F_\tau\left(\bm{\eta_m^{j_p,k}}\right)}{N(z_{j_p}-\lambda_k)}\right)+\mathcal{O}\left(\frac{N^\varepsilon}{N\eta}S_{J_{\mathrm{out}}}^{(0,\tau)}\right).$$
Recall now the definition of $\Phi_t$ from \eqref{lem:eq:perfobsdeformed} and write,
\begin{equation}\label{eq:debutrec}
\sum_{\substack{k,\,0<\vert j_p-k\vert\leqslant \ell\\k\notin\{j_1,\dots,j_p\}}}\frac{F_\tau\left(\bm{\eta_m^{j_p,k}}\right)}{N(z_{j_p}-\lambda_k)}=\frac{1}{\mathcal{M}\left(\bm{\eta_m^{j_p,k}}\right)}\mathds{E}\left[\sum_{\substack{k,\,0<\vert j_p-k\vert\leqslant \ell\\k\notin\{j_1,\dots,j_p\}}}\sum_{G\in\mathcal{G}_{\bm{\eta_m^{j_p,k}}}}\frac{\prod_{e\in\mathcal{E}(G)}p(e)}{N(z_{j_p}-\lambda_k)}\middle\vert\bm{\lambda}\right].
\end{equation}


First consider the contribution of \eqref{eq:defg1} in the sum in \eqref{eq:debutrec}, denote $e_k=\{(k,1),(k,2)\}$ and write
\begin{align}
\sum_{\substack{k,\,0<\vert j_p-k\vert\leqslant \ell\\k\notin\{j_1,\dots,j_p\}}}\sum_{G\in\mathcal{G}^{(1)}_{\bm{\eta_m^{j_p,k}}}}\frac{\prod_{e\in\mathcal{E}(G)}p(e)}{N(z_{j_p}-\lambda_k)}&=\sum_{\substack{k,\,0<\vert j_p-k\vert\leqslant \ell\\k\notin\{j_1,\dots,j_p\}}}\sum_{G\in\mathcal{G}_{\bm{\eta_m^{j_p,k}}}^{(1)}}\left(\prod_{e\in\mathcal{E}(G)\setminus\{e_k\}}p(e)\right)\frac{p_{kk}}{N(z_{j_p}-\lambda_k)}\nonumber\\
\label{eq:n-1}&=\left(\sum_{G\in\mathcal{G}_{\bm{\eta_m\setminus j_p}}}\prod_{e\in\mathcal{E}(G)}p(e)\right)\sum_{\substack{k,\,0<\vert j_p-k\vert\leqslant \ell\\k\notin\{j_1,\dots,j_p\}}}\frac{p_{kk}}{N(z_{j_p}-\lambda_k)}.
\end{align}
To control the last term in \eqref{eq:n-1}, we can use the local law. First write,
\begin{equation}
\sum_{\substack{k,\,0<\vert j_p-k\vert\leqslant \ell\\k\notin\{j_1,\dots,j_p\}}}\frac{p_{kk}}{N(z_{j_p}-\lambda_k)}=\sum_{k=1}^N\frac{p_{kk}}{N(z_{j_p}-\lambda_k)}+\sum_{k,\,\vert j_p-k\vert>\ell}\frac{p_{kk}}{N(z_{j_p}-\lambda_k)}+\mathcal{O}\left(N^\varepsilon\frac{\widehat{I}}{N\eta}\right),
\end{equation}
where we used the bound $\vert p_{kk}\vert\leqslant N^\varepsilon \widehat{I}$.
Now, recall the definition of $p_{kk}$ from \eqref{eq:pii} so that we have
\begin{align*}
&\left\vert\mathrm{Im}\sum_{k=1}^N\frac{p_{kk}}{N(z_{j_p}-\lambda_k)}\right\vert=\left\vert\mathrm{Im}\frac{1}{N}\sum_{\alpha\in I}G_{\alpha\alpha}(z_{j_p})-\frac{\mathrm{Im}m_t(z_{j_p})}{\mathrm{Im}m_t(z_0)}\frac{1}{N}\sum_{\alpha\in I}\mathrm{Im}g_\alpha(t,z_0)\right\vert\\
&\leqslant \left\vert \mathrm{Im}\frac{1}{N}\sum_{\alpha\in I}\left(G_{\alpha\alpha}(z_{j_p})-g_\alpha(t,z_{j_p})\right)\right\vert+\left\vert\frac{1}{N}\sum_{\alpha\in I}\left(\mathrm{Im}g_\alpha(t,z_{j_p})-\frac{\mathrm{Im}m_t(z_{j_p})}{\mathrm{Im}m_t(z_0)}\mathrm{Im}g_\alpha(t,z_0)\right)\right\vert\\
&\leqslant N^\varepsilon\left(\frac{\widehat{I}}{\sqrt{N\eta}}+\widehat{I}\frac{w}{t}\right)
\end{align*} 
where $z_0:=\gamma_{t,i_0}+\mathrm{i}\eta$. Note that we used the fact that $\bm{\eta_m}$ is supported in $J_{\mathrm{out}}$, so that $\vert z_0-z_{j_p}\vert=\vert \gamma_{t,i_0}-\lambda_{j_p}\vert\leqslant N^\varepsilon w.$ We can then use Lemma \ref{lem:holderdeformed} and bound
$$\sum_{G\in\mathcal{G}_{\bm{\eta_m\setminus j_p}}}\prod_{e\in\mathcal{E}(\mathcal{G}}p(e)=\mathcal{O}\left(\sup_{\bm{\xi}\subset J_{\mathrm{out}},\,\mathcal{N}(\bm{\xi})=n-1}\vert \Phi_t(\bm{\xi})\vert\right)=\mathcal{O}\left((S_{J_\mathrm{out}}^{(0,\tau)})^{\frac{n-1}{n}}\right)$$

Now, consider the contribution of \eqref{eq:defg2} in the sum from \eqref{eq:debutrec}. Note that for any graph $G$ in $\mathcal{G}^{(2)}_{\bm{\xi}}$, there exists $q$ and $q'$ in $\{1,\dots,\,m\}$, $a\in\{1\,\dots,\eta_{i_q}\}$ and $b\in\{1,\dots,\eta_{i_{q'}}\}$, such that $e_q:=\{(k,1),(q,a)\}$ and $e_{q'}:=\{(k,2),(q',b)\}$ are edges in $G$. We can then write
\begin{align}
\sum_{k,\,0<\vert j_p-k\vert\leqslant\ell}\sum_{G\in\mathcal{G}^{(2)}_{\bm{\eta_m^{j_p,k}}}}\frac{\prod_{e\in\mathcal{E}(G)}p(e)}{N(z_{j_p}-\lambda_k)}&=\sum_{k,\,0<\vert j_p-k\vert\leqslant\ell}\sum_{G\in\mathcal{G}^{(2)}_{\bm{\eta_m^{j_p,k}}}}\left(\prod_{e\in\mathcal{E}(G)\setminus{\{e_q,e_{q'}}\}}p(e)\right)\frac{p_{i_q,k}p_{i_{q'},k}}{N(z_{j_p}-\lambda_k)},\nonumber\\
\label{eq:g2bound}&=\sum_{q,q'=1}^m\left(\sum_{G\in\mathcal{H}_{\bm{\eta_m\setminus{j_p}}}^{q,q'}}\prod_{e\in \mathcal{E}(G)}p(e)\right)\sum_{k,\,0<\vert j_p-k\vert\leqslant \ell}\frac{p_{i_q,k}p_{i_{q'},k}}{N(z_{j_p}-\lambda_k)},
\end{align}
where we defined the set of graphs $\mathcal{H}_{\bm{\xi}}^{q,q'}$ to be the set of perfect matching of the complete graph on the set of vertices  
$\mathcal{V}_{\bm{\xi}}$ where we removed a single particle at the site $i_q$ and $i_{q'}$. Note that for any graph $G\in\mathcal{H}_{\bm{\eta_m\setminus j_p}}^{q,q'}$, $\prod_{e\in\mathcal{E}(G)}p(e)$ is a monomial of degree $n-2$.\\
Now, we can bound the imaginary part of the second sum in \eqref{eq:g2bound},
\begin{align}
\label{eq:g2bound2}\left\vert\mathrm{Im}\left(\sum_{\substack{k,\,0<\vert j_p-k\vert\leqslant \ell\\k\notin\{j_1,\dots,j_p\}}}\frac{p_{i_q,k}p_{i_{q'},k}}{N(z_{j_p}-\lambda_k)}\right)\right\vert\leqslant \frac{C}{N\eta}\sum_{k=1}^N\left(p_{i_q,k}^2+p_{i_{q'},k}^2\right)=\mathcal{O}\left(\frac{N^\varepsilon\widehat{I}}{N\eta}\right).
\end{align}
For the last inequality, we used the following identity on eigenvectors
$$\sum_{k=1}^Np_{i,k}^2=\sum_{\alpha\in I}u_i(\alpha)^2$$
and that for any $\varepsilon>0$,using the entrywise local law from Theorem \ref{entrylocal} on a diagonal entry of the resolvent,
$$u_k(\alpha)^2\leqslant N^{-1+\varepsilon}\mathrm{Im}\left(G(\tau,\lambda_k+\mathrm{i}N^{-1+\varepsilon})_{\alpha,\alpha}\right)\leqslant \frac{N^\varepsilon}{Nt}$$
Again, we can bound the other term from \eqref{eq:g2bound} using Lemma \ref{lem:holderdeformed},
$$\sum_{G\in\mathcal{H}_{\bm{\eta_m\setminus{j_p}}}^{q,q'}}\prod_{e\in \mathcal{E}(G)}p(e)=\mathcal{O}\left(N^\varepsilon\sup_{\bm{\xi}\subset J_{\mathrm{out}},\,\mathcal{N}(\bm{\xi})=n-2}\vert \Phi_t(\bm{\xi})\vert\right)=\mathcal{O}\left(\left(S_{J_{\mathrm{out}}}^{(0,\tau)}\right)^{\frac{n-2}{n}}\right)$$

Finally, putting all these estimates together, we get the Gronwall-type inequality,

\begin{multline}
\partial_\tau \Gamma_\tau(\bm{\eta_m})\leqslant-\frac{1}{\eta}\Gamma_\tau(\bm{\eta_m})+\mathcal{O}\left( \frac{N^\varepsilon}{\eta}\left(\left(\frac{\ell}{Nw}+\frac{N\tau}{\ell}+\frac{1}{N\eta}\right)S_{J_{\mathrm{out}}}^{(0,\tau)}\right.\right.\\\left.\left.+\left(\frac{\widehat{I}}{\sqrt{N\eta}}+\widehat{I}\frac{w}{t}\right)(S_{J_{\mathrm{out}}}^{(0,\tau)})^{\frac{n-1}{n}}+\left(\frac{\widehat{I}}{N\eta}+\frac{N\tau}{\ell^2}\right)(S_{J_{\mathrm{out}}}^{(0,\tau})^{\frac{n-2}{n}}\right)\right)
\end{multline}

In order to get a proper bound using Gronwall's lemma, we need to take $\eta\ll\tau$ but to get the best estimates possible, we also have to take $\eta$ as large as possible. Hence, considering $\eta=N^{-\varepsilon}\tau$ we have the bound
\begin{multline}
S_{J_{\mathrm{in}}}^{(0,\tau)}\leqslant N^\varepsilon\left(\frac{\ell}{Nw}+\frac{N\tau}{\ell}+\frac{1}{N\tau}\right)S_{J_{\mathrm{out}}}^{(0,\tau)}+N^{2\varepsilon}\left(\frac{\widehat{I}}{\sqrt{N\tau}}+\widehat{I}\frac{w}{t}\right)(S_{J_{\mathrm{out}}}^{(0,\tau)})^{\frac{n-1}{n}}\\
+N^\varepsilon\left(\frac{\widehat{I}}{N\tau}+\widehat{I}\frac{N\tau}{\ell^2}\right)(S_{J_{\mathrm{out}}}^{(0,\tau})^{\frac{n-2}{n}}
\end{multline}
which gives the Proposition \ref{prop:mainque}.
\end{proof}

Now that we have the bound from Proposition \ref{prop:mainque}, we are able to get a bound on the $p_{ij}$ using Lemma \ref{lem:holderdeformed}. To do so, we can use a sequence of set of indices with decreasing size and apply recursively Proposition \ref{prop:mainque}. We will also need to choose the right parameters $\ell$, $w$ and $\tau$.
\begin{proof}[Proof of Theorem \ref{theo:resultint}]
Consider first any $\varepsilon$ small enough, such that if we write $t=N^{-1+\delta}$ (recall that $t\in\mathcal{T}_\omega$ so that $t\gg N^{-1}$) we have $\varepsilon<4\delta/3.$ and a large $D>0$. Then we can take the following parameters :
\begin{equation}\label{eq:deftauw}
w=N^\varepsilon \tau\quad \text{and}\quad \ell=N\sqrt{\tau w},
\end{equation}
note that we have the right bounds between these parameters: $N^{-1}\ll \tau\ll \ell/N\ll w\ll t$, and define the following sequence of sets of indices, defined implicitely,
$$\left\{\begin{array}{ll}
			J_0=\mathcal{A}_{w}^\kappa(i_0),\\
			J_i=\{i:\,d(i,J_{i+1})\leqslant N^{-\varepsilon}Nw\}.
		\end{array}		
\right.$$
From Proposition \ref{prop:mainque} we have the following bound holding with overwhelming probability,
\[
S_{J_{i+1}}^{(0,\tau)}\leqslant N^{-\varepsilon/2}S_{J_i}^{(0,\tau)}
+
\left(\frac{\widehat{I}}{\sqrt{N\tau}}
+\widehat{I}\frac{N^\varepsilon\tau}{t}\right)(S_{J_i}^{(0,\tau)})^{\frac{n-1}{n}}\\+\widehat{I}\frac{1}{N\tau}(S_{J_i}^{(0,\tau)})^{\frac{n-2}{n}}.
\]

Now see that as long as we have
$$(S_{J_i}^{(0,\tau)})^{1/n}\geqslant CN^{3\varepsilon/2}\Xi(\tau)$$
with $\Xi$ given in \eqref{eq:defxi}, we obtain the recursive bound
$$S_{J_{i+1}}^{(0,\tau)}\leqslant N^{-\varepsilon/2}S_{J_i}^{(0,\tau)}.$$
But if we take a very large $i$ so that the previous bound cannot hold, for instance $i=\lceil 3\varepsilon^{-1}\rceil$, then it means that for such a $i$ we have the bound
$$(S_{J_i}^{(0,\tau)})^{1/n}\leqslant CN^{3\varepsilon/2}\Xi.$$
Now using the definition of $p_{ii}$, we have for $i\in\mathcal{A}_{w}^\kappa(i_0),$
\begin{equation}\label{eq:endproof1}
\left\vert\sum_{\alpha\in I}\left(u_i(\alpha)^2-\frac{1}{N}\sigma_t^2(\alpha,i)\right)\right\vert\leqslant \vert p_{ii}\vert +\widehat{I}\frac{w}{t}\leqslant\vert p_{ii}\vert + \Xi(\tau).
\end{equation}
Finally, using Markov's inequality, taking for instance $n=\lfloor 3D/\varepsilon\rfloor$, and using Lemma \ref{lem:holderdeformed} to bound the $p_{ij}$ by $S^{(0,\tau)}$ we have
\begin{equation}
\label{eq:endproof2}\mathds{P}\left(\vert p_{ii}\vert+\vert p_{ij}\vert\geqslant N^\varepsilon\Xi(\tau)\right)\leqslant N^{-D}.
\end{equation}
The result then follows from combining \eqref{eq:endproof1} and \eqref{eq:endproof2}.
\end{proof}

\section{Approximation by a Gaussian divisible ensemble}

\subsection{Continuity of the Dyson Brownian motion}\label{sec:cont}

\indent In Subsection \ref{subsec:moment} we showed that the moments of the eigenvectors of the matrix $H_\tau$ are asymptotically those of a Gaussian random variable with variance $\sigma_{t}^2$. Now, since $\tau$ is a small time, recall that $N^{-1}\ll \tau\ll t$, we can use the continuity of the Dyson Brownian motion to show that $H_\tau$ and $H_0=W_t$ have the same local statistics. In order to state a proper continuity lemma we need to have a dynamics with constant second moments and vanishing expectation.\\
\indent  First see that the variance of the centered model is
$$\mathds{E}\left[\left(W_{t,ij}-D_{ij}\right)^2\right]=\frac{t}{N}.$$
Consider, for $0\leqslant s\leqslant \tau$, the following variance-preserving dynamics on symmetric matrices.
\begin{align}
\label{eq:deftildeh}\mathrm{d}\left(\tilde{H}(s)-D\right)&=\frac{\mathrm{d}B}{\sqrt{N}}-\frac{1}{2t}\left(\tilde{H}(s)-D\right)\mathrm{d}s,\\
\tilde{H}(0)&=W_t=D+\sqrt{t}W.
\end{align}

The following lemma gives us a continuity argument between $\tilde{H}(\tau)$ and $W_t$. It is similar to Lemma A.1 in \cite{bourgade2017eigenvector} or Lemma 4.3 in \cite{huang2015bulk}. We will later use this lemma on the resolvent entries.
\begin{lemma}\label{lem:cont}
Denote $\partial_{\alpha\beta}=\partial_ {\tilde{H}_{\alpha\beta}}$. Take $F$ a smooth function of the matrix entries satisfying
\begin{equation}\label{eq:boundcont}
\mathds{E}\left[\sup_{\bm{\theta},\,0\leqslant s\leqslant \tau}\frac{1}{N}\sum_{\alpha\leqslant \beta}\left(\frac{N\vert \left(\tilde{H}(s)-D\right)_{\alpha\beta}\vert^3}{t}+\vert \left(\tilde{H}(s)-D\right)_{\alpha\beta}\vert\right)\left\vert\partial_{\alpha\beta}^3F(\bm{\theta}\tilde{H}_s)\right\vert\right]\leqslant M
\end{equation}
where $(\bm{\theta}H)_{\alpha\beta}=\theta_{\alpha\beta}H_{\alpha\beta}$ with $\theta_{kl}=1$ for $\{k,l\}\neq\{\alpha,\beta\}$ and $\theta_{\alpha\beta}\in[0,1].$
Then
$$\mathds{E}[F(\tilde{H}(\tau))]-\mathds{E}[F(\tilde{H}(0))]=\mathcal{O}\left(\tau\right)M.$$
\end{lemma}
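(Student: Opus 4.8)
The plan is to run the standard interpolation argument for the variance--preserving Ornstein--Uhlenbeck dynamics, the key point being that the drift coefficient $\tfrac1{2t}$ was chosen precisely so that the first two moments of $(\tilde H(s)-D)_{ij}$ are frozen along the flow. Writing $h_{ij}(s):=(\tilde H(s)-D)_{ij}$, I would first apply It\^o's formula to $F(\tilde H(s))$, using $\mathrm d h_{ij}(s)=\tfrac{\mathrm d B_{ij}}{\sqrt N}-\tfrac1{2t}h_{ij}(s)\,\mathrm ds$ and the independence of the driving Brownian motions. After the martingale term drops out in expectation this gives
\begin{equation*}
\frac{\mathrm d}{\mathrm ds}\,\mathds E\big[F(\tilde H(s))\big]
=\mathds E\Big[-\frac1{2t}\sum_{i\le j}h_{ij}(s)\,\partial_{ij}F(\tilde H(s))
+\frac1{2N}\sum_{i\le j}(1+\delta_{ij})\,\partial_{ij}^2F(\tilde H(s))\Big],
\end{equation*}
so it suffices to bound the right-hand side by $\mathcal O(M)$ uniformly for $s\in[0,\tau]$ and then integrate over $[0,\tau]$.

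Next I would exploit variance preservation. Each $h_{ij}$ solves an autonomous scalar OU equation, hence $h_{ij}(s)$ is independent of $\{h_{kl}(s):\{k,l\}\ne\{i,j\}\}$, and a direct computation gives $\mathds E[h_{ij}(s)]=0$ and $\mathds E[h_{ij}(s)^2]=e^{-s/t}\mathds E[h_{ij}(0)^2]+\tfrac1N\int_0^s e^{-(s-u)/t}\,\mathrm du=\tfrac tN=\mathds E[(W_{t,ij}-D_{ij})^2]$ for every $s$, for the off-diagonal entries. Freezing all entries other than $h_{ij}(s)$ and viewing $\partial_{ij}F(\tilde H(s))$ as a random smooth function of the single variable $h_{ij}(s)$, I would then apply the cumulant (generalized Stein) expansion truncated at first order,
\begin{equation*}
\mathds E\big[h_{ij}(s)\,\partial_{ij}F\big]=\frac tN\,\mathds E\big[\partial_{ij}^2F\big]+\mathds E[r_{ij}(s)].
\end{equation*}
After multiplication by $-\tfrac1{2t}$ the first term cancels exactly the matching $\partial_{ij}^2F$ contribution in the Laplacian part of the generator, so the whole off-diagonal contribution reduces to $-\tfrac1{2t}\sum_{i<j}\mathds E[r_{ij}(s)]$. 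The remainder $r_{ij}(s)$ collects the third and higher cumulants and the Taylor tail; invoking the moment assumption (iii) (which yields $|\kappa_3(h_{ij}(s))|\lesssim (t/N)^{3/2}$ and subexponential tails for $h_{ij}(s)$) and evaluating the Taylor remainder at an interpolating matrix $\bm{\theta}\tilde H(s)$, one bounds $\tfrac1{2t}|\mathds E[r_{ij}(s)]|$ by a constant times $\tfrac1N\,\mathds E\big[\big(\tfrac{N|h_{ij}(s)|^3}{t}+|h_{ij}(s)|\big)\sup_{\bm{\theta}}|\partial_{ij}^3F(\bm{\theta}\tilde H(s))|\big]$: the $|h_{ij}|^3$-weight absorbs the genuine (fourth-order-type) Taylor tail, while the $|h_{ij}|$-weight absorbs the $\kappa_3$ term since $|\kappa_3(h_{ij}(s))|\asymp\tfrac tN\,\mathds E|h_{ij}(s)|$. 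Summing over $i<j$ gives a bound $\le CM$ by \eqref{eq:boundcont}.

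Finally I would treat the diagonal entries by the same (and simpler) computation — their variance is likewise frozen in the symmetric normalization, and in any case they contribute only lower-order terms — and conclude that $|\tfrac{\mathrm d}{\mathrm ds}\mathds E[F(\tilde H(s))]|\le CM$ for all $s\in[0,\tau]$, whence $\mathds E[F(\tilde H(\tau))]-\mathds E[F(\tilde H(0))]=\mathcal O(\tau)M$ after integration. The main obstacle is purely bookkeeping in the third step: one must organize the cumulant-expansion remainder so that it is dominated by exactly the two weights $\tfrac{N|h_{ij}|^3}{t}$ and $|h_{ij}|$ appearing in the hypothesis (rather than, say, by $\mathds E[|h_{ij}|^3]\sup|\partial^3_{ij}F|$ or by higher derivatives of $F$), which is what forces the interpolation parameters $\bm{\theta}$ into the statement and is where assumption (iii) is used; the analytic skeleton (It\^o $+$ first-two-moments cancellation $+$ Gronwall-type integration) is otherwise routine.
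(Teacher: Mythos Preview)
Your proposal is correct and follows essentially the same route as the paper: It\^o's formula, cancellation of the second-order terms thanks to variance preservation, and a third-derivative remainder bounded by the hypothesis. The paper phrases the cancellation step as a direct Taylor expansion of both $\partial_{ij}F(\tilde H(s))$ and $\partial_{ij}^2 F(\tilde H(s))$ around the deterministic point $\tilde H_{ij}=D_{ij}$ (so the matching term is $\partial_{ij}^2 F\big|_{\tilde H_{ij}=D_{ij}}$ rather than $\mathds E[\partial_{ij}^2 F]$); your Stein/cumulant formulation is the same computation reorganized, and the two weights $N|h_{ij}|^3/t$ and $|h_{ij}|$ you identify are precisely the Lagrange remainders of those two Taylor expansions --- no need to pass through $\kappa_3$ or assumption~(iii) at this stage.
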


\begin{proof}
By It\^o's formula we have
$$\partial_s\mathds{E}[F(\tilde{H}(s))]=-\frac{1}{2N}\sum_{\alpha\leqslant \beta}\frac{N}{t}\mathds{E}[\left(\tilde{H}(s)-D\right)_{\alpha\beta}\partial_{\alpha\beta}F(\tilde{H}_s)]-\mathds{E}[\partial^2_{\alpha\beta}F(\tilde{H}_s)].$$
Using Taylor expansions, we can write, forgetting the dependence in time for  clarity,
\begin{align*}
\mathds{E}[\left(\tilde{H}-D\right)_{\alpha\beta}\partial_{\alpha\beta}F(\tilde{H})]&=\mathds{E}[\left(\tilde{H}-D\right)_{\alpha\beta}\partial_{\alpha\beta}F_{\tilde{H}_{\alpha\beta}=D_{\alpha\beta}}]+\mathds{E}[\left(\tilde{H}-D\right)_{\alpha\beta}^2\partial_{\alpha\beta}^2F_{\tilde{H}_{\alpha\beta}=D_{\alpha\beta}}]\\
&+\mathcal{O}\left(\mathds{E}\left[\sup_{\bm{\theta}}\left\vert \left(\tilde{H}-D\right)_{\alpha\beta}^3\partial_{\alpha\beta}^3F(\bm{\theta}\tilde{H})\right\vert\right]\right).\\
&=\frac{t}{N}\partial^2_{\alpha\beta}F_{\tilde{H}_{\alpha\beta}=D_{\alpha\beta}}+\mathcal{O}\left(\mathds{E}\left[\sup_{\bm{\theta}}\left\vert \left(\tilde{H}-D\right)_{\alpha\beta}^3\partial_{\alpha\beta}^3F(\bm{\theta}\tilde{H})\right\vert\right]\right).
\end{align*}
and
\begin{align*}
\mathds{E}[\partial^2_{\alpha\beta}F(\tilde{H}_s)]=\mathds{E}[\partial^2_{\alpha\beta}F_{\tilde{H}_{\alpha\beta}=D_{\alpha\beta}}]+\mathcal{O}\left(\mathds{E}\left[\sup_{\bm{\theta}}\left\vert \left(\tilde{H}-D\right)_{\alpha\beta}\partial^3_{\alpha\beta}F(\bm{\theta}\tilde{H}_s)\right\vert\right]\right).
\end{align*}
Putting everything together the claim follows.
\end{proof}
This continuity property of the Dyson Brownian motion gives us a control over the eigenvalues and eigenvectors of $\tilde{H}(0)$ and $\tilde{H_s}.$
\begin{corollary}\label{coro:conti}
Let $\kappa\in(0,1)$ and $m\in\mathbb{N}$. Let $\Theta:\mathbb{R}^{2m}\rightarrow\mathbb{R}$ be a smooth function satisfying
$$\sup_{k\in[\![0,5]\!],x\in\mathbb{R}^{2m}}\vert\Theta^{(k)}(x)(1+\vert x\vert)^{-C}<\infty,$$
for some $C>0$. Denote $\tilde{u}_1(s),\dots,\tilde{u}_N(s)$ the eigenvectors of $\tilde{H}(s)$ associated with the eigenvalues $\tilde{\lambda}_1(s),\dots,\tilde{\lambda}_N(s)$. Define, for a small $\mathfrak{a}$ the time domain
$$\mathcal{T}_\mathfrak{a}'=\left[\frac{N^\mathfrak{a}}{N},N^{-\mathfrak{a}}\sqrt{\frac{t}{N}}\right],$$
then for any $\tau\in\mathcal{T}_\mathfrak{a}'$, there exists $\mathfrak{p}>0$ depending on $\Theta,$ $\mathfrak{a}$, $\kappa$ and $r$ such that
\begin{equation}\label{eq:contineigvect}
\sup_{I\subset \mathcal{I}_{r}^\kappa, \,\vert I\vert=m,\,\Vert\mathbf{q}\Vert=1}\left\vert\left(\mathds{E}^{\tilde{H}_s}-\mathds{E}^{\tilde{H}_0}
\right)\Theta\left(\left(N(\tilde{\lambda}_k-\gamma_{k,t}),\frac{N}{\sigma_t^2(\mathbf{q},k,\eta)}\langle\mathbf{q},\tilde{u}_k\rangle^2\right)_{k\in I}\right)\right\vert\leqslant N^{-\mathfrak{p}}.
\end{equation}

\end{corollary}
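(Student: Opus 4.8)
The plan is to apply the continuity Lemma \ref{lem:cont} to the smooth function $F=\Theta\circ\Phi_I$, where $\Phi_I$ maps a symmetric matrix $H$ to the vector $\big(N(\lambda_k(H)-\gamma_{k,t}),\,\tfrac{N}{\sigma_t^2(\mathbf q,k)}\langle\mathbf q,u_k(H)\rangle^2\big)_{k\in I}$, and to check that the quantity $M$ appearing in \eqref{eq:boundcont} satisfies $\tau M\le N^{-\mathfrak p}$ uniformly in $\tau\in\mathcal T_{\mathfrak a}'$, $I$ and $\mathbf q$; since Lemma \ref{lem:cont} compares $\tilde H(\tau)$ with $\tilde H(0)=W_t$, this is exactly \eqref{eq:contineigvect}. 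The first point to address is that $\lambda_k$ and $u_k$ are not smooth functions of the entries near spectral degeneracies. Following \cite{bourgade2016emf,huang2015bulk} I would use the contour representations
\[
\lambda_k=-\frac{1}{2\pi\mathrm i}\oint_{\Gamma_k}z\,\mathrm{Tr}\,(H-z)^{-1}\,\mathrm dz,\qquad
\langle\mathbf q,u_k\rangle^2=-\frac{1}{2\pi\mathrm i}\oint_{\Gamma_k}\langle\mathbf q,(H-z)^{-1}\mathbf q\rangle\,\mathrm dz,
\]
where $\Gamma_k$ is a fixed contour enclosing $\gamma_{k,t}$ at distance of a small mesoscopic scale $\eta_1$, with $N^{-1}\ll\eta_1\ll t$. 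These identities hold on the event $\mathcal R$ that $\Gamma_k$ isolates exactly one eigenvalue for every $k\in I$, every $s\in[0,\tau]$ and every one-entry modification $\bm\theta\tilde H_s$ — guaranteed with overwhelming probability by the rigidity bound $\sup_{0\le\tau\le tN^{-\mathfrak a}}|\lambda_i(t+\tau)-\gamma_{i,t}|\prec N^{-1}$ from Section \ref{sec:local}, complemented by a level-repulsion (gap lower) bound for this model so that $\Gamma_k$ can be taken of radius smaller than the local gap. On $\mathcal R^c$ the polynomial growth of $\Theta$ and $\mathbb P(\mathcal R^c)\le N^{-D}$ make the contribution negligible, so it suffices to bound \eqref{eq:boundcont} for the smooth function $\hat F$ built from the right-hand sides above.

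Next I would differentiate. Since $\partial_{ij}(H-z)^{-1}=-(H-z)^{-1}(\mathbf e_i\mathbf e_j^\ast+\mathbf e_j\mathbf e_i^\ast)(H-z)^{-1}$, the quantities $\partial_{ij}^p\hat F$ for $p\le 3$ are, by the chain rule applied to $\Theta$, finite sums of contour integrals over the $\Gamma_k$ of products of a bounded number of resolvent entries of the forms $(G\mathbf q)_a$, $(\mathbf q^\ast G)_b$, $G_{ab}$ and $\mathrm{Tr}\,G^p$, multiplied by factors $\Theta^{(j)}(\,\cdot\,)$ that stay bounded on $\mathcal R$ (here I use the growth hypothesis on $\Theta$ together with $|\lambda_k-\gamma_{k,t}|\prec N^{-1}$ and $\langle\mathbf q,u_k\rangle^2\prec(Nt)^{-1}$, which follow from the local laws of Section \ref{sec:local}). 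On each $\Gamma_k$ I would then apply those local laws to $\tilde H_s$: since the variance-preserving dynamics keeps the moment bounds, $\tilde H_s=D+\sqrt t\,W^{(s)}$ for a Wigner matrix $W^{(s)}$ in the sense of Definition \ref{def:wignernonsmooth}, so Theorems \ref{local}, \ref{entrylocal} and \ref{theo:isolocal} apply with the \emph{same} limiting transform $m_t$ and remain valid after modifying one entry. This yields $|G_{ab}|\prec(N\eta_1)^{-1/2}t^{-1}$, $|(G\mathbf q)_a|^2\le\eta_1^{-1}\mathrm{Im}\,G_{aa}\prec(\eta_1 t)^{-1}$, $|\langle\mathbf q,G\mathbf q\rangle|\prec t^{-1}$, and so on, the recurring factors $t^{-1}$ encoding the partial localization of the model. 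Summing these bounds over $i\le j$ against $\tfrac{N}{t}|(\tilde H_s-D)_{ij}|^3+|(\tilde H_s-D)_{ij}|$, whose expectation is $\prec(t/N)^{1/2}$ because $\|\sqrt{N/t}(\tilde H_s-D)_{ij}\|_p\le C_p$, produces a bound of the form $M\prec N^{C_0}(t/N)^{1/2}\big(\text{powers of }\eta_1^{-1}\text{ and }t^{-1}\big)$.

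Finally I would optimize $\eta_1$ and collect exponents. The key point is that $\tau M$ carries a genuine negative power of $N$ precisely because $\tau\le N^{-\mathfrak a}\sqrt{t/N}$: the upper endpoint of $\mathcal T_{\mathfrak a}'$ is designed so that $\tau$ times the worst third-derivative bound is $\le N^{-\mathfrak p}$ for some $\mathfrak p=\mathfrak p(\Theta,\mathfrak a)>0$, which is the same constraint flagged in Remark \ref{rk:eta}. I expect the main obstacle to be exactly this bookkeeping: controlling the third derivatives \emph{uniformly} in $I$ and in $\mathbf q\in\mathbb S^{N-1}$ through the anisotropic and isotropic local laws while keeping track of the factors $t^{-1}$ (the bulk eigenvectors of $W_t$ are concentrated on only $Nt\ll N$ sites, so $\sigma_t^2$, $\mathrm{Im}\langle\mathbf q,G\mathbf q\rangle$ and the entries $|g_i|$ are of order $t^{-1}$ rather than $1$, and $\langle\mathbf q,u_k\rangle^2\sim(Nt)^{-1}$), together with the correct construction of the single-eigenvalue contour, which needs a level-repulsion input beyond plain rigidity. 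Once these estimates are in place, Lemma \ref{lem:cont} delivers \eqref{eq:contineigvect}.
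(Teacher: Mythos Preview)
Your overall strategy matches the paper's: use Lemma~\ref{lem:cont} on resolvent functionals, feed in a level-repulsion estimate, and recover spectral quantities via the contour representations from \cite{knowles2013eigenvector}. Two points in your sketch, however, do not work as written.

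First, the contour scale is inconsistent. You take $\Gamma_k$ of radius a mesoscopic $\eta_1$ with $N^{-1}\ll\eta_1\ll t$, yet also require it to be ``smaller than the local gap''; since the gap is of order $N^{-1}$, these are incompatible. Isolating a single eigenvalue forces $\eta_1\sim N^{-1-\xi}$, a \emph{sub-microscopic} scale at which the local laws of Section~\ref{sec:local} do not apply directly. The paper handles this by first establishing the comparison \eqref{eq:controlresolv} for the normalized resolvent $\langle\mathbf q,G(z)\mathbf q\rangle\big/\mathrm{Im}\bigl(\sum_iq_i^2g_i(t,z)\bigr)$ at mesoscopic $\eta$, then extending to $\eta\ge N^{-1-\xi}$ via the monotonicity bound $|\langle\mathbf q,G(E+\mathrm iy)\mathbf q\rangle|\le C(\eta/y)\log N\,\mathrm{Im}\langle\mathbf q,G(E+\mathrm i\eta)\mathbf q\rangle$; only then are the contour integrals usable. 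Second, level repulsion for $W_t$ is not an available input here---it is derived within this very proof by a bootstrap: apply Lemma~\ref{lem:cont} first to products of traces $\tfrac1N\mathrm{Tr}\,G(z_k)$, deduce gap universality \eqref{eq:gapuniv} for $W_t$ by comparison with $\tilde H_\tau$ (for which it is known from \cite{landon2015convergence}), and only then conclude level repulsion. Your proposal assumes it from the outset.

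A smaller but real issue: the crude bounds you list, e.g.\ $|G_{ab}|\prec(N\eta_1)^{-1/2}t^{-1}$, do not sum to $M\prec\sqrt{N/t}$; summing them over $i\le j$ loses an extra factor of $N/t$, which would destroy the conclusion $\tau M\le N^{-\mathfrak p}$ on $\mathcal T'_{\mathfrak a}$. The paper avoids this by exploiting the structural estimates $\sum_k|g_k(t,z)|\le CN\log N$ and $|g_i|^2\asymp t^{-1}\mathrm{Im}\,g_i$ (see \eqref{eq:sommegk}, \eqref{eq:continter1}) when summing the third derivatives, which is precisely the ``bookkeeping'' you flag but do not carry out.
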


\begin{proof}
We prove this corollary using the continuity estimate from Lemma \ref{lem:cont}. To do so, we use the techniques introduced in \cite{knowles2013eigenvector} in order to change estimates of the resolvent below microscopic scales, in other words control $G(E+\I \eta)$ for $\eta\ll N^{-1}$, into estimates on eigenvectors. Indeed, it has been shown that such a control of the resolvent combined with an estimate on the number of eigenvalues in a very small interval allows us via integrating the resolvent over such an interval to gain estimates on eigenvectors. We can then split the proof into two results we need to show:

\begin{itemize}
\item[$(i)$] A level repulsion estimate on the eigenvalues for both matrix ensembles of the following form: for $E\in\mathcal{I}_r^\kappa$ and a small $\xi>0$ there exists $\mathfrak{d}>0$ such that
$$\mathds{P}\left(\left\vert\left\{\lambda_i\in[E-N^{-1-\xi},E+N^{-1-\xi}]\right\}\right\vert\geqslant 2\right)\leqslant N^{-\xi-\mathfrak{d}}.$$ 
\item[$(ii)$] Comparison of the resolvent below microscopic scales: for any smooth function $F$ of polynomial growth, there exists a $\mathfrak{c}>0$ and a $\xi>0$ such that for all $N^{-1-\xi}<\eta<t,$ 
\begin{equation}\label{eq:controlresolv}
\sup_{\substack{\Vert\mathbf{q}\Vert_{2}=1,\\\,E_1,\dots,E_m\in\mathcal{I}^\kappa_{r}}}\left\vert\left(\mathds{E}^{\tilde{H}_\tau} -\mathds{E}^{W_t}\right)F\left(\left(\frac{1}{\mathrm{Im}\left(\sum_{\alpha=1}^N q_\alpha^2g_\alpha(t,z_k)\right)}\langle\mathbf{q},G(z_k)\mathbf{q}\rangle\right)_{k=1}^m\right)\right\vert
\leqslant {N^{-\mathfrak{c}}}
\end{equation}
where $z_k=E_k+\mathrm{i}\eta$.
\end{itemize}

We first prove $(i)$ for the eigenvalues of $W_t.$ This property can be deduced from gap universality, note that gap universality for Gaussian perturbation of size $t\in\mathcal{T}_\omega$ has been shown in \cite{landon2017convergence} (a stronger level repulsion estimate can also be found in \cite{landon2017convergence}*{Section 5}). In \cite{landon2017convergence}*{Subsection 2.4}, Landon$-$Yau explains that they can deduce universality for deformed Wigner ensembles. However, they state the result for an initial condition such that $r^2\gg t$. It has been confirmed by the authors that it is a simple typographical error and should be read as $r\gg t.$ We will nonetheless give an idea of the proof of $(i)$ for the sake of completeness.\\
\indent  As said earlier, we will first apply Lemma \ref{lem:cont} to $$F(\tilde{H}_s)=\frac{1}{N}\mathrm{Tr}(\tilde{H}_s-z)^{-1}\quad\text{for $z$ in}\quad \left\{z=E+\mathrm{i}\eta, \,E\in\mathcal{I}_{r}^\kappa,\,N^{-1-\xi}<\eta<t\right\}$$
for $\xi>0$ arbitrarily small. See that in Lemma \ref{lem:cont}, we need to bound a functional of the form $F(\theta \tilde{H}_s)$ for $\theta$ a perturbation of two entries of the matrix. Since we will only need bounds such as Theorem \ref{theo:isolocal} or Corollary \ref{coro:aniso} with a possible $N^\varepsilon$ for any small positive $\varepsilon$ window, such bounds still hold for the perturbated matrix (see \cite{erdos2017dynamical}*{Section 15}). So we will explain the bound for the third derivative of $F$ applied directly to $\tilde{H}_s$.
Note that by definition of $\tilde{H}$, we have $\vert (\tilde{H}-D)_{\alpha\beta}\vert\prec\sqrt{\frac{t}{N}}$ so that we can bound the left hand side of \ref{eq:boundcont} by
\begin{equation}\label{eq:boundcont2}
\frac{1}{N}\sqrt{\frac{t}{N}}\sum_{\alpha\leqslant \beta}\vert\partial^3_{\alpha\beta}F(\tilde{H}_s)\vert.
\end{equation}
Taking the third derivative of $F$ with respect to an entry, we obtain, writing $G=(\tilde{H}_s-z)^{-1}$ for simplicity
$$\partial_{\alpha\beta}^3F(\tilde{H}_s)=-\frac{1}{N}\sum_{\gamma=1}^N\sum_{\bm{\alpha},\bm{\beta}}G_{\gamma\alpha_1}G_{\beta_1\alpha_2}G_{\beta_2\alpha_3}G_{\beta_3\gamma}$$
where $\{\alpha_\ell,\beta_\ell\}=\{\alpha,\beta\}$ for $\ell=1,2,3$.
To bound the sum in the previous equation, we will need the following high probability bound for the off-diagonal entries of the resolvent which follows directly from Theorem \ref{entrylocal}
\begin{align}
\label{eq:offdiaglocal}\left\vert G_{\alpha\beta}(z)\right\vert&\prec \frac{1}{\sqrt{N\eta}}\sqrt{\vert g_\alpha(t,z)g_\beta(t,z)\vert}.
\end{align}
Note that these bounds holds for $\eta\gg N^{-1}$, we will first consider such $\eta$.

\paragraph{}Finally we can bound \eqref{eq:boundcont2},
\begin{align}
\label{eq:boundtrace1}\frac{1}{N}\sqrt{\frac{t}{N}}\sum_{\alpha\leqslant \beta}\left\vert \partial^3_{\alpha\beta}F(\tilde{H}_s)\right\vert&\prec\frac{N^{2\xi}}{N^2}\sqrt{\frac{t}{N}}\sum_{\alpha\leqslant \beta}\sum_{\gamma=1}^N\sum_{\bm{\alpha},\bm{\beta}}\left\vert g_\gamma\right\vert\left(\left\vert g_{\alpha_1}g_{\beta_1}g_{\alpha_2}g_{\beta_2}g_{\alpha_3}g_{\beta_3}
\right\vert\right)^{1/2} .
\end{align}
Now, from Lemma 7.5 of \cite{landon2017convergence}, we have
\begin{equation}\label{eq:sommegk}
\frac{1}{N}\sum_{\gamma=1}^N\vert g_\gamma(t,z)\vert \leqslant C\log N
\end{equation} 
where the constant $C$ only depend on $D$ our diagonal matrix.
Besides, in the last product of \eqref{eq:boundtrace1}, there are, by definition of $\bm{\alpha}$ and $\bm{\beta}$, three occurrences of $g_\alpha$ and three occurrences of $g_\beta$. Thus,
\begin{align}
\eqref{eq:boundtrace1}\leqslant \frac{CN^{2\xi}\log N}{N}\sqrt{\frac{t}{N}}\sum_{\alpha\leqslant \beta}\vert g_\alpha(t,z)g_\beta(t,z)\vert^{3/2}
&\leqslant \frac{CN^{2\xi}\log N}{Nt}\sqrt{\frac{t}{N}}\left(\sum_{\alpha=1}^N\vert g_\alpha(t,z)\vert\right)^2\\
&\leqslant N^{2\xi}\log^3 N\sqrt{\frac{N}{t}}
\end{align}
where we used \eqref{eq:sommegk} in the first inequality, the fact that $\vert g_\beta\vert\leqslant Ct^{-1}$ in the second and \eqref{eq:sommegk} again in the final inequality. In order to go below microscopic scales, recall that we used local laws that holds down to mesoscopic scales, we can use the following identity, for $y\leqslant \eta$,
$$\mathrm{Im}\left(\frac{1}{N}\mathrm{Tr}G(E+\mathrm{i}y)\right)\leqslant \frac{\eta}{y}\mathrm{Im}\left(\frac{1}{N}\mathrm{Tr}G(E+\mathrm{i}\eta)\right).$$

Finally, using Lemma \ref{lem:cont}, we get,
\begin{equation}
\sup_{E\in\mathcal{I}^\kappa_{r}}\left\vert\left(\mathds{E}^{\tilde{H}_\tau}
-\mathds{E}^{W_t}\right)\left[\frac{1}{N}\mathrm{Tr}G(z)\right]\right\vert\leqslant N^{5\xi}\tau\sqrt{\frac{N}{t}}\leqslant N^{-\mathfrak{e}}
\end{equation} 
for some ${\mathfrak{e}>0}$ by taking $\tau\in\mathcal{T}_\mathfrak{a}'$ for $\mathfrak{a}>5\xi$.
We can easily generalize this result to a product of trace, indeed taking 
$$F(\tilde{H}_s)=\prod_{k=1}^m F_k\quad \text{with}\quad F_k=\frac{1}{N}\mathrm{Tr}G(z_k),$$
we can take the third derivative and write
\begin{multline}
\label{eq:thirdderiv}\partial_{\alpha\beta}^3F=\sum_{k_1=1}^m\partial_{\alpha\beta}^3F\prod_{k\neq k_1}F_k+3\sum_{k_1=1}^m\sum_{k_2\neq k_1}\partial_{\alpha\beta}^2F_{k_1}\partial_{\alpha\beta}F_{k_2}\prod_{k\neq k_1,k_2}F_k\\
+\sum_{k_1=1}^m\sum_{k_2\neq k_1}\sum_{k_3\neq k_1,k_2}\partial_{\alpha\beta}F_{k_1}\partial_{\alpha\beta}F_{k_2}\partial_{\alpha\beta}F_{k_3}\hspace{-1em}\prod_{k\neq k_1,k_2,k_3}\hspace{-1em}F_k.
\end{multline}
Then, using the first and second derivative of $F_k$,
\begin{align*}
\partial_{\alpha\beta}F_k&=\frac{1}{N}\sum_{k=1}^N\sum_{\substack{\alpha,\,\beta\\\{\alpha,\beta\}=\{i,j\}}}G_{k\alpha}G_{\beta k},\\
\partial_{\alpha\beta}^2F_k&=\frac{1}{N}\sum_{k=1}^N\sum_{\substack{\bm{\alpha},\bm{\beta}\\\{\alpha_k,\beta_k\}=\{i,j\}}}G_{k\alpha_1}G_{\beta_1\alpha_2}G_{\beta_2 k},
\end{align*}
we can bound \eqref{eq:thirdderiv} in a similar way and finishing the bound by Lemma \ref{lem:cont}. Again, now that we have any polynomial of fixed degree, we can also extend to any smooth function  $F$ with polynomial growth. 

Now, a consequence of these uniform bounds in $\mathrm{Re}(z)$ between $\tilde{H}_0=W_t$ and $\tilde{H}_\tau$ for $\tau\in\mathcal{T}_\mathfrak{a}'$ for some small $\mathfrak{a}$ gives us a comparison of the gap distribution between these two matrix ensembles (see \cite{erdos2012bulk} for instance). Namely, there exists $c_1>0$ such that for any $O$ a smooth test function of $n$ variables and any index $i$ such that $\gamma_{i,t}\in\mathcal{I}^\kappa_{r},$ we have for $N$ large enough and $i_1,\dots,i_n$ indices such that $i_k\leqslant N^{c_1}$, 
\begin{equation}\label{eq:gapuniv0}
\left\vert\left(\mathds{E}^{W_t}-\mathds{E}^{\tilde{H}_\tau}\right)\left[
O\left(N\rho^{(N)}_{t}(\gamma_{i,t})(\lambda_i-\lambda_{i,i+i_1}),\dots,N\rho^{(N)}_{t}(\gamma_{i,t})(\lambda_i-\lambda_{i+i_n})\right)\right]\right\vert\leqslant N^{-c_1}.
\end{equation}   

But $\tilde{H}_s$ is a matrix with a small Gaussian component following the conditions of \cite{landon2017convergence}, so that we have, for this matrix ensemble, gap universality. Hence, combining this gap universality with the continuity of the Green's function, we obtain gap universality of the matrix ensemble $D+\sqrt{t}W$. In other words, there exists $c_2>0$ such that, taking the same assumptions as for \eqref{eq:gapuniv0}, we can write,
\begin{multline}\label{eq:gapuniv}
\left\vert\mathds{E}^{W_t}\left[O\left(N\rho^{(N)}_{t}(\gamma_{i,t})(\lambda_i-\lambda_{i,i+i_1}),\dots,N\rho^{(N)}_{t}(\gamma_{i,t})(\lambda_i-\lambda_{i+i_n})\right)\right]\right.\\\left.-\mathds{E}^{\mathrm{GOE}}\left[O\left(N\rho^{(N)}_{sc}(\mu_i)(\lambda_i-\lambda_{i,i+i_1}),\dots,N\rho^{(N)}_{sc}(\mu_i)(\lambda_i-\lambda_{i+i_n})\right)\right]\right\vert\leqslant N^{-c_2}
\end{multline} 
where $\rho_{sc}$ is the density of Wigner's semicircular law and $\mu_i$ its quantiles defined by
\begin{equation}\label{eq:defquant}
\rho_{sc}(x)=\sqrt{4-x^2}\mathds{1}_{[-2,2]},\quad \int_{-\infty}^{\mu_i}\mathrm{d}\rho_{sc}(E)=\frac{i}{N}.
\end{equation}

Finally, \eqref{eq:gapuniv} combined with Theorem \ref{theo:rigidity} gives us the level repulsion estimate $(i)$ for the matrix $W_t$, indeed consider $E\in \mathcal{I}^\kappa_{r}$, and $\ell$ the index such that 
$$\left\vert \gamma_{\ell,t}-E\right\vert\leqslant \min_{k\in \mathcal{I}_{r}^\kappa}\left\vert \gamma_{k,t}-E\right\vert,$$
then, for any $\tilde{\varepsilon}>0$, we have
\begin{align*}
\mathds{P}\left(\left\vert \left\{i,\,\lambda_i\in[E-N^{-1-\xi},E+N^{-1-\xi}]\right\}\right\vert\geqslant 2\right)&\leqslant \sum_{\vert k-\ell\vert \leqslant N^{\tilde{\varepsilon} }}\mathds{P}^{W_t}\left(\left\vert \lambda_k-\lambda_{k+1}\right\vert<N^{-1-\xi}\right)\\
&\leqslant \sum_{|k-\ell|\leqslant N^{\tilde{\varepsilon}}}\mathds{P}^{\mathrm{GOE}}\left(\left(\vert \lambda_k-\lambda_{k+1}\right\vert<N^{-1-\xi}\right)+N^{-c_{\varepsilon}+\tilde{\varepsilon}}\\
&\leqslant N^{-2\xi+\tilde{\varepsilon}}+N^{-c_\varepsilon+\tilde{\varepsilon}}\\
&\leqslant N^{-\xi-\mathfrak{\delta}}
\end{align*}
for some $\delta>0$ by taking $\tilde{\varepsilon}$ and $\xi>0$ small enough. Note that we used rigidity in the first inequality, gap universality in the second and a level repulsion estimate for GOE matrix which for instance can be found in \cite{erdos2015gap} .
  
\paragraph{}In order to get the resolvent estimate $(ii)$, we will use Lemma \ref{lem:cont}. To do so, we will first explain how to get the bound $M$ for  
$$F(\tilde{H}_s)=\frac{1}{\mathrm{Im}\left(\sum_{\gamma=1}^Nq_\gamma^2g_\gamma(t,z)\right)}\langle\mathbf{q},(\tilde{H}-z)^{-1}\mathbf{q}\rangle$$
 for $z\in\mathbb{C}$ down to below microscopic scales. To get the right bound, we will first need to use local laws which hold down to mesoscopic scales $\eta=N^{-1+\xi}$. \\
 Now for the third derivative of $F$, first write
\begin{equation}\label{eq:thirderiv}
\vert\partial_{\alpha\beta}^3F(H_s)\vert=\left\vert\frac{1}{\mathrm{Im}\left(\sum_{\gamma=1}^Nq_\gamma^2g_\gamma(t,z)\right)}\sum_{1\leqslant a,b\leqslant N}\sum_{\bm{\alpha,\beta}}q_aG_{a\alpha_1}G_{\beta_1\alpha_2}G_{\beta_2\alpha_3}G_{\beta_3b}q_b\right\vert
\end{equation}
where $\{\alpha_k,\beta_k\}=\{i,j\}$ for $k=1,2,3$. In order to bound the four terms coming up in the previous equation we will need Corollary \ref{coro:aniso}.
Writing \eqref{eq:anisolocal} for $\mathbf{v}=\mathbf{q}$ and $\mathbf{w}=\mathbf{e_\alpha}$, we obtain
$$\langle \mathbf{q},G\mathbf{e_\alpha}\rangle=q_\alpha g_\alpha(t,z)+\mathcal{O}_\prec\left( \frac{1}{\sqrt{N\eta}}\sqrt{\mathrm{Im}\left(\sum_{\gamma=1}^N q_\gamma^2g_\gamma(t,z) \right)\mathrm{Im}\left(g_\alpha(t,z)\right)}\right).$$

Note that since we want a bound holding down to microscopic scales, the error terms has to be taken into account. In particular, we will consider $\eta\gtrsim N^{-1-\xi}$ so that we can bound every $(N\eta)^{-1/2}$ by $N^{\xi/2}$. In the following computations, we will not bound the errors coming cross terms for simplicity, they can be bounded in a similar way.

\paragraph{}We can divide the sum in \eqref{eq:thirderiv} in three parts. The first case consists in $\{\beta_1,\alpha_2\}=\{\beta_2,\alpha_3\}=\{\alpha,\beta\}$. In this case, note that, necessarily, $\{\alpha_1,\beta_3\}=\{\alpha,\beta\}$ and write
\begin{align}
\sum_{1\leqslant a,b\leqslant N}q_aG_{a\alpha_1}&G_{\beta_1\alpha_2}G_{\beta_2\alpha_3}G_{\beta_3b}q_b=\langle\mathbf{q},G\mathbf{e_{\bm{\alpha}_1}}\rangle G_{\beta_1\alpha_2}G_{\beta_2\alpha_3}\langle \mathbf{e_{\bm{\beta}_3}},G\mathbf{q}\rangle\nonumber\\\nonumber
&\prec\frac{1}{N\eta}\min\left(\vert g_\alpha(t,z)\vert,\,\vert g_\beta(t,z)\vert\right)^2\langle\mathbf{q},G\mathbf{e_{\bm{\alpha}_1}}\rangle\langle \mathbf{e_{\bm{\beta}_3}},G\mathbf{q}\rangle\nonumber\\
\label{eq:firstbothird}&\prec {N^{2\xi}}\left(\min\left(\vert g_\alpha(t,z)\vert,\,\vert g_\beta(t,z)\vert\right)^2\vert q_\alpha g_\alpha(t,z)q_\beta g_\beta(t,z)\vert\right.\\
\label{eq:firstbothird1}&\left.+\min\left(\vert g_\alpha(t,z)\vert,\,\vert g_\beta(t,z)\vert\right)^2\mathrm{Im}\left(\sum_{\gamma=1}^Nq_\gamma^2 g_\gamma(t,z)\right)\sqrt{\vert g_\alpha(t,z)g_\beta(t,z)\vert}\right).
\end{align}
Putting the leading order \eqref{eq:firstbothird} in the sum of \eqref{eq:boundcont2}, we have the bound
\begin{align}
\label{eq:continter0}
\sqrt{\frac{t}{N}}\frac{1}{N\mathrm{Im}\left(\sum_{\gamma=1}^Nq_\gamma^2g_\gamma(t,z)\right)}
&\sum_{1\leqslant \alpha\leqslant \beta\leqslant N}\eqref{eq:firstbothird}\\
&\leqslant\sqrt{\frac{t}{N}}\frac{N^{2\xi}}{N\mathrm{Im}\left(\sum_{\gamma=1}^Nq_\gamma^2g_\gamma(t,z)\right)}\sum_{1\leqslant i\leqslant j\leqslant N}\hspace{-1em}\vert q_\alpha q_\beta\vert\vert g_\alpha(t,z)g_\beta(t,z)\vert^2\\
&\leqslant \sqrt{\frac{t}{N}}\frac{CN^{2\xi}}{N\mathrm{Im}\left(\sum_{\gamma=1}^Nq_\gamma^2g_\gamma(t,z)\right)}\sum_{1\leqslant \alpha\leqslant \beta\leqslant N}\hspace{-1em}\left(q_\alpha^2+q_\beta^2\right)\vert g_\alpha(t,z)g_\beta(t,z)\vert^2\\
&\leqslant\sqrt{\frac{t}{N}}\frac{CN^{2\xi}}{N\mathrm{Im}\left(\sum_{\gamma=1}^Nq_\gamma^2g_\gamma(t,z)\right)}\left(\sum_{\alpha=1}^Nq_\alpha^2\vert g_\alpha(t,z)\vert^2\right)\sum_{\beta=1}^N\vert g_\beta(t,z)\vert^2.
\end{align}
Note that by definition of $g_\alpha(t,z)=(D_\alpha-z-tm_t(z))^{-1}$, the fact that $\mathrm{Im}m_t(z)\asymp 1$ and $\eta\leqslant t$, we can write
\begin{equation}\label{eq:continter1}
\vert g_\alpha(t,z)\vert^2\asymp \frac{1}{t}\mathrm{Im}\left(g_\alpha(t,z)\right).
\end{equation}
Besides we also have from \eqref{eq:sommegk},
\begin{equation}\label{eq:continter2}
\sum_{\alpha=1}^N \vert g_\alpha(t,z)\vert^2\leqslant \frac{C}{t}\sum_{\alpha=1}^N \vert g_\alpha(t,z)\vert\leqslant\frac{CN}{t}\log N.
\end{equation}
Injecting now \eqref{eq:continter1} and \eqref{eq:continter2} in \eqref{eq:continter0}, we get the bound
\begin{align}
\sqrt{\frac{t}{N}}\frac{1}{N\mathrm{Im}\left(\sum_{\gamma=1}^Nq_\gamma^2g_\gamma(t,z)\right)}\sum_{1\leqslant \alpha<\beta\leqslant N}\eqref{eq:firstbothird}&\leqslant C\sqrt{\frac{t}{N}}\frac{N^{2\xi}}{N}\frac{N}{t^2}\log N\\
\label{eq:finalfirst}&\leqslant\frac{N^{3\xi}}{Nt}\sqrt{\frac{N}{t}}.
\end{align}
Looking now at the error term \eqref{eq:firstbothird1} and injecting it in the sum \eqref{eq:boundcont2}, we obtain
\begin{align}
\sqrt{\frac{t}{N}}\frac{1}{N\mathrm{Im}\left(\sum_{\gamma=1}^Nq_\gamma^2g_\gamma(t,z)\right)}\sum_{1\leqslant \alpha<\beta\leqslant N}\eqref{eq:firstbothird1}&\leqslant\sqrt{\frac{t}{N}}\frac{N^{2\xi}}{N}\sum_{1\leqslant \alpha<\beta\leqslant N}\vert g_\alpha(t,z)g_{\beta}(t,z)\vert^{3/2}\nonumber\\
&\leqslant\sqrt{\frac{t}{N}}\frac{N^{2\xi}}{Nt}\left(\sum_{\alpha=1}^N \vert g_\alpha(t,z)\vert\right)^2\nonumber\\
\label{eq:finalfirst1}&\leqslant N^{3\xi}\sqrt{\frac{N}{t}}. 
\end{align}

The second case are the terms where one term is diagonal and the other is an off-diagonal term. More precisely the set $\bm{\alpha}$ and $\bm{\beta}$ such that $\beta_1=\alpha_2$ and $\beta_2\neq \alpha_3$ or $\beta_1\neq\alpha_2$ and $\beta_2=\alpha_3$. Note that necessarily, in that case, $\alpha_1=\beta_3$. For instance consider the term
\begin{align}\label{eq:cont2ndterm}
\langle\mathbf{q},G\mathbf{e_{\bm{\alpha}_1}}\rangle G_{\beta_1\alpha_2}G_{\beta_2\alpha_3}\langle \mathbf{e_{\bm{\beta}_3}},G\mathbf{q}\rangle=\langle\mathbf{q},G\mathbf{e}_i\rangle G_{jj}G_{\alpha\beta}\langle\mathbf{e}_i,G\mathbf{q}\rangle.
\end{align}
Putting all the leading terms from Theorem \ref{entrylocal}, \eqref{eq:offdiaglocal} and \eqref{eq:anisoloca}, we obtain the bound
\begin{align}
\label{eq:cont2ndterm1}\langle\mathbf{q},G\mathbf{e}_\alpha\rangle G_{\beta\beta}&G_{\alpha\beta}\langle\mathbf{e}_\alpha,G\mathbf{q}\rangle\prec\vert q_\alpha\vert^2\vert g_\alpha(t,z)\vert^2\vert g_\beta(t,z)\vert\frac{1}{\sqrt{N\eta}}\min(\vert g_\alpha(t,z)\vert, \vert g_\beta(t,z)\vert)\\
\label{eq:cont2ndterm2}&+\frac{1}{(N\eta)^{3/2}}\mathrm{Im}\left(\sum_{\gamma=1}^Nq_\gamma^2 g_\gamma(t,z)\right)\sqrt{\vert g_\alpha(t,z)g_\beta(t,z)\vert}\vert g_\beta(t,z)\vert\min(\vert g_\alpha(t,z)\vert, \vert g_\beta(t,z)\vert)\\
&\leqslant N^{2\xi}\left(q_\alpha^2\vert g_\alpha(t,z)g_\beta(t,z)\vert^2+\mathrm{Im}\left(\sum_{\gamma=1}^Nq_\gamma^2g_\gamma(t,z)\right)
\vert g_\alpha(t,z)g_\beta(t,z)\vert^{3/2}\right).
\end{align}
Then injecting the bounds \eqref{eq:continter1} and \eqref{eq:continter2} in the sum of \eqref{eq:boundcont2}, one gets
\begin{equation}\label{eq:finalsecond}
\sqrt{\frac{t}{N}}\frac{N^{2\xi}}{N\mathrm{Im}\left(\sum_{\gamma=1}^Nq_\gamma^2g_\gamma(t,z)\right)}\sum_{1\leqslant \alpha<\beta\leqslant N}q_\alpha^2\vert g_\alpha g_\beta\vert^2\leqslant \frac{CN^{3\xi}}{Nt}\sqrt{\frac{N}{t}}
\end{equation}
and for the second term,
\begin{equation}\label{eq:finalsecond1}
\sqrt{\frac{t}{N}}\frac{N^{2\xi}}{N}\sum_{1\leqslant \alpha<\beta\leqslant N}\vert g_\alpha(t,z)g_\beta(t,z)\vert^{3/2}\leqslant N^{3\xi}\sqrt{\frac{N}{t}}.
\end{equation}

The final case consists of $\bm{\alpha}$ and $\bm{\beta}$ such that $\left\{\{\beta_1,\alpha_2\},\{\beta_2,\alpha_3\}\right\}=\{\{\alpha,\alpha\},\{\beta,\beta\}\}.$ Note that, in this case, we necessarily have $\alpha_1\neq\beta_3$. For instance, consider the term
\begin{equation}
\langle\mathbf{q},G\mathbf{e_{\bm{\alpha}_1}}\rangle G_{\beta_1\alpha_2}G_{\beta_2\alpha_3}\langle \mathbf{e_{\bm{\beta}_3}},G\mathbf{q}\rangle=\langle\mathbf{q},G\mathbf{e}_\alpha\rangle G_{\beta\beta}G_{\alpha\alpha}\langle\mathbf{e}_\beta,G\mathbf{q}\rangle.
\end{equation}
Again, taking the leading terms from the local laws from Threom \ref{entrylocal} and Corollary \ref{coro:aniso}, 
\begin{align}
\langle\mathbf{q},G\mathbf{e}_\alpha\rangle G_{\beta\beta}G_{\alpha\alpha}\langle\mathbf{e}_\beta,G\mathbf{q}\rangle\prec\vert q_\alpha q_\beta\vert\vert g_\alpha(t,z)g_\beta(t,z)\vert^2+N^{\xi}\vert g_\alpha(t,z)g_\beta(t,z)\vert^{3/2}\mathrm{Im}\left(\sum_{\gamma=1}^Nq_\gamma^2 g_\gamma(t,z)\right).\label{eq:contthirdcase}
\end{align}
Then using similar bounds as the first case one gets
\begin{equation}\label{eq:finalthird}
\sqrt{\frac{t}{N}}\frac{1}{N\mathrm{Im}\left(\sum_{\gamma=1}^Nq_\gamma^2g_\gamma(t,z)\right)}\sum_{1\leqslant \alpha<\beta\leqslant N}\eqref{eq:contthirdcase}\prec N^{2\xi}\left(\frac{1}{Nt}\sqrt{\frac{N}{t}}+\sqrt{\frac{N}{t}}\right).
\end{equation}

Finally, putting together \eqref{eq:finalfirst}, \eqref{eq:finalfirst1}, \eqref{eq:finalsecond}, \eqref{eq:finalsecond1} and \eqref{eq:finalthird}, we get the bound, for $\eta=N^{-1+\xi}$.
\begin{equation}\label{eq:contoneprod}
\sqrt{\frac{t}{N}}\frac{1}{N}\sum_{1\leqslant \alpha<\beta\leqslant N}\partial_{\alpha\beta}^3F(\tilde{H}_s)\prec N^{3\xi}\sqrt{\frac{N}{t}}.
\end{equation}

 In order to get a bound for $\eta$ below microscopic scales, we can use the following inequality, for any $y\leqslant \eta$, which can be found in \cite{erdos2012bulk}*{Section 8},
$$\vert\langle \mathbf{v},G(E+\mathrm{i}y)\mathbf{w}\rangle\vert\leqslant C\log N\frac{\eta}{y}\mathrm{Im}\langle\mathbf{v},G(E+i\eta)\mathbf{w}\rangle.$$
This bounds allows us to get below microscopic scales for $F$ and its derivatives since they only involves such quantity as $\langle \mathbf{v},G(E+\mathrm{i}y)\mathbf{w}\rangle$.
Thus, uniformly in $E\in\mathcal{I}_{r}^\kappa$ and $N^{-1-\xi}\leqslant\eta\leqslant t$, we have
\begin{equation}
M=\mathcal{O} \left({N^{5\xi}}\sqrt{\frac{N}{t}}\right).
\end{equation}
Using now Lemma \ref{lem:cont}, we can make $W_t$ undergo the dynamics $\tilde{H}_s$ up to a time $\tau\ll N^{-5\xi}\sqrt{\frac{t}{N}}$ with $\xi$ arbitrarily small in order to get the right bound.

\paragraph{}For a product of resolvent entries, one can do similar computations and bounds. Indeed consider $m\geqslant 0$, and
$$F(\tilde{H}_s)=\prod_{k=1}^mF_k(\tilde{H}_s)\quad\text{with}\quad
F_k(\tilde{H}_s)= \langle\mathbf{q},G(z_k)\mathbf{q}\rangle,$$
then one can write the third derivative of $F$ as \eqref{eq:thirdderiv} and using the fact that
\begin{align}
\partial_{\alpha\beta}F_k&=-\hspace{-1em}\sum_{\{\mathbf{\alpha},\mathbf{\beta}\}=\{i,j\}}\langle\mathbf{q},G\mathbf{e}_{\alpha}\rangle\langle\mathbf{e}_{\beta},G\mathbf{q}\rangle,\\
\partial_{\alpha\beta}^2F_k&=\sum_{\bm{\alpha},\bm{\beta}}\langle\mathbf{q},G\mathbf{e}_{\alpha_1}\rangle G_{\beta_1,\alpha_2}\langle\mathbf{e}_{\beta_2},G\mathbf{q}\rangle
\end{align}
where $\{\alpha_i,\beta_i\}=\{i,j\}$ and using the same type of bounds as for \eqref{eq:contoneprod}, we obtain the result \eqref{eq:controlresolv} since the extension to any smooth function with polynomial growth is also clear.
\end{proof}
\subsection{Reverse heat flow}\label{sec:reverse}
In Subsection \ref{subsec:perfect}, we showed Theorem \ref{theo:resultint}, which corresponds to our main result for the matrix $H_\tau=W_t+\sqrt{\tau}\mathrm{GOE}$ for $N^{-1}\ll \tau\ll t$ with a general Wigner matrix $W$ in the definition of $W_t$. Thus, the overwhelming probability bound holds for the eigenvectors of this matrix $H_\tau$ giving us a strong form of quantum unique ergodicity for the deformed Gaussian divisible ensemble. In order to remove the small Gaussian component in the matrix, we will use the reverse heat flow technique from \cites{erdos2010peche, erdos2011universality} which allows us to obtain an error as small as we want in total variation between two matrix ensembles. In order to use this technique, we need the smoothness assumption on the matrix $W$ given by Definition \ref{def:smoothwig}. We first introduce some notation for this section.\\
\indent As before, we let $\nu$ denote the distribution of the entries of $W$, and $\varphi$ denote the density of $\nu$ with respect to $\rho$, the Gaussian distribution with mean zero and variance one, that is, $\D \nu=\varphi\D \varrho$. The reverse heat flow technique gives the existence of a probability distribution $\tilde{\nu}_s$ for any $s$ small enough such that making  $\tilde{\nu}_s$ undergo the Ornstein-Uhlenbeck process of generator
$$A:=\frac{1}{2}\frac{\partial^2}{\partial x^2}-\frac{x}{2}\frac{\partial}{\partial x}$$ 
approaches the distribution $\nu$ in total variation.\\
\indent This process on all the matrix entries induces the Dyson Brownian motion process on the eigenvalues. Thus the following proposition tells us that there exists a distribution of a matrix from the Gaussian divisible process of the form 
$$\widetilde{W}_s=\sqrt{1-s}\,\widetilde{W}+\sqrt{s}\mathrm{GOE}$$
that approximates as close as polynomially possible a smooth Wigner matrix $W$. The precise statement is written in the following proposition.
\begin{proposition}[\cite{erdos2011universality}]\label{prop:reverseheat}
Let $K$ be a positive integer and $\nu=\varphi \varrho$ a distribution smooth in the sense that it follows the conditions (ii) and (iii) of Definition \ref{def:smoothwig}. Then there exists $s_K$ a small positive constant depending on $K$ such that for any $0<s\leqslant s_K$, there exists a probability density $\psi_s$ with mean zero and variance one such that we have the inequality
\begin{equation}
\int\left\vert e^{sA}\psi_s-\varphi\right\vert\mathrm{d}\varrho\leqslant Cs^K
\end{equation} 
for some positive constant $C$ depending only on $K$. Besides we also have the inequality for the joint probability of all matrix entries in the following sense,
\begin{equation}
\int\left\vert e^{sA^{\otimes N^2}}\psi_s^{\otimes N^2}-\varphi^{\otimes N^2}\right\vert\mathrm{d}\varrho\leqslant C N^2s^K
\end{equation}
\end{proposition}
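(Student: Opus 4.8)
The plan is to take $\psi_s$ to be a truncated Taylor expansion of the (ill-defined) backward Ornstein--Uhlenbeck flow $e^{-sA}$ applied to $\varphi$, and to control the error using contractivity of the forward flow $e^{sA}$ on $L^1(\varrho)$. Explicitly, I would set
\[
\psi_s:=\sum_{k=0}^{K-1}\frac{(-s)^k}{k!}\,A^k\varphi .
\]
One first checks that $\psi_s$ is an admissible density. Writing $P_u:=e^{uA}$, recall that $P_u$ preserves $\varrho$ and is self-adjoint on $L^2(\varrho)$, and that $A1=0$, $Ax=-x/2$, $Ax^2=1-x^2$. Pairing $A^k\varphi$ against $1$, $x$, $x^2$ and using that $\nu$ (hence $\varphi\varrho$) has mean zero and variance one, one gets $\int\psi_s\,\mathrm{d}\varrho=1$, $\int x\,\psi_s\,\mathrm{d}\varrho=0$, $\int x^2\psi_s\,\mathrm{d}\varrho=1$ exactly. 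Nonnegativity of $\psi_s$ is the only point that has to be arranged by hand: since (as in the last step below) $A^k\varphi=(\text{polynomial in }x)\cdot\varphi$ up to factors controlled by condition (ii), one has $\psi_s=\varphi\,(1+\mathcal{O}(sQ(x)))$ for a fixed polynomial $Q$, so after truncating at $|x|\le C(\log(1/s))^{1/2}$ and renormalizing — which costs only $\mathcal{O}(s^K)$ in total variation, absorbed into $C$ — one obtains a genuine probability density with unchanged first two moments.

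The heart of the argument is a telescoping identity. With $R_u:=\sum_{k=0}^{K-1}\frac{(-u)^k}{k!}A^k$ and $H(u):=P_uR_u\varphi$ on $u\in[0,s]$, one has $H(0)=\varphi$, $H(s)=e^{sA}\psi_s$, and the elementary algebraic identity $A R_u+\partial_uR_u=\frac{(-u)^{K-1}}{(K-1)!}A^K$ gives $H'(u)=\frac{(-u)^{K-1}}{(K-1)!}P_uA^K\varphi$; hence
\[
e^{sA}\psi_s-\varphi=\int_0^s\frac{(-u)^{K-1}}{(K-1)!}\,P_uA^K\varphi\,\mathrm{d}u .
\]
Since $\int|P_uf|\,\mathrm{d}\varrho\le\int P_u|f|\,\mathrm{d}\varrho=\int|f|\,\mathrm{d}\varrho$, this yields $\int|e^{sA}\psi_s-\varphi|\,\mathrm{d}\varrho\le\frac{s^K}{K!}\,\|A^K\varphi\|_{L^1(\varrho)}$.

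To finish one must show $\|A^K\varphi\|_{L^1(\varrho)}<\infty$, and this is where the smoothness and tail conditions on $\nu$ enter. Since $\varphi=c\,e^{x^2/2-\Theta}$, applying $A$ repeatedly keeps $\varphi$ as a factor and produces a polynomial in $x$ and in the derivatives $\Theta',\dots,\Theta^{(2K)}$, namely $A^K\varphi=\Pi_K(x,\Theta',\dots,\Theta^{(2K)})\,\varphi$ with $\Pi_K$ depending only on $K$. Condition (ii), $|\Theta^{(j)}|\le C_0(1+x^2)^{C_1}$, bounds $\Pi_K(x,\Theta'(x),\dots)$ by some polynomial $P_K(x)$, so $|A^K\varphi|\le P_K\varphi$ pointwise and $\|A^K\varphi\|_{L^1(\varrho)}\le\int P_K\,\varphi\varrho\,\mathrm{d}x=\int P_K\,\mathrm{d}\nu<\infty$, all moments being finite by the subexponential tail (iii). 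This gives the first inequality with $C=C_K:=\|A^K\varphi\|_{L^1(\varrho)}/K!$. For the tensorized statement, $e^{sA^{\otimes N^2}}$ acts as $(e^{sA})^{\otimes N^2}$ on product densities, so one compares $(e^{sA}\psi_s)^{\otimes N^2}$ with $\varphi^{\otimes N^2}$ via the telescoping bound $\|\prod_i f_i-\prod_i g_i\|_{1}\le\sum_i\|f_i-g_i\|_{1}$ for $L^1(\varrho)$-normalized densities, which produces the factor $N^2$.

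I expect the main obstacle to be the stability step $A^K\varphi=(\text{polynomial})\cdot\varphi$ with a polynomial controlled uniformly by condition (ii) — i.e.\ verifying that repeated differentiation does not destroy the Gaussian-times-subexponential structure — together with the minor but fiddly renormalization needed to make $\psi_s$ an honest probability density; everything else reduces to the contraction estimate and elementary bookkeeping.
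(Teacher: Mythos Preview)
The paper does not give its own proof of this proposition: it is quoted verbatim from \cite{erdos2011universality} and used as a black box in Section~\ref{sec:reverse}. Your argument is precisely the standard one from that reference---define $\psi_s$ as the truncated Taylor polynomial $\sum_{k<K}\frac{(-s)^k}{k!}A^k\varphi$ of the formal backward flow, use the algebraic identity $AR_u+\partial_uR_u=\frac{(-u)^{K-1}}{(K-1)!}A^K$ together with $L^1(\varrho)$-contractivity of $e^{uA}$ to get the $s^K$ bound, and control $\|A^K\varphi\|_{L^1(\varrho)}$ via the polynomial growth of $\Theta^{(j)}$ and the subexponential tail---so there is nothing to compare.

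One small remark: your handling of nonnegativity (truncate at $|x|\lesssim(\log(1/s))^{1/2}$ and renormalize) is correct in spirit but slightly imprecise as written. The truncation threshold should rather be chosen so that the subexponential tail of $\nu$ makes the discarded mass $\mathcal{O}(s^K)$, i.e.\ at $|x|\sim(\log(1/s))^{C}$ for a constant depending on the exponent $q$ in \eqref{eq:subexp}; and one must also check that on the retained region $sQ(x)$ stays below $1$, which forces $s_K$ small depending on $K$ and the constants in \eqref{eq:regucond}. This is exactly the ``fiddly renormalization'' you flag, and it goes through with no surprises.
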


Now, see that this proposition holds for any fixed $K$ so that, taking $s=N^{-\varepsilon}$ for some small $\varepsilon$ we can choose a large $K$ only depending on $\varepsilon$ (and not on $N$) so that we can obtain any polynomial bound between the two matrix ensembles. This property allows us to get overwhelming probability bounds on the eigenvectors since the total variation distance of the distribution of the eigenvector entries is smaller than the total variation distance between the joint probability of the matrix entries.

\section{Proofs of main results}\label{secloc}\label{sec:proof}
Now that we have the result for the Gaussian divisible ensemble $H_\tau$ with $N^{-1}\ll \tau\ll t$ by Section \ref{localmaxprinc}, combining it with the continuity argument from the last subsection, we are able to prove Theorem \ref{theo:resultGaus} and Corollary \ref{QUE}. These two results are a consequence of the following proposition showing the convergence of moments for the eigenvectors of $W_t$.
\begin{proposition}\label{prop:phi}~\\
Let $\kappa\in(0,1)$ and $m$ an integer, for a set of indices $I\subset\mathcal{A}^\kappa_{r}$, such that $\vert I\vert=m$, we have for any deterministic unit vector $\mathbf{q}=\mathbf{q}_N,$
\begin{equation}\label{eq:phi}
\mathds{E}\left[P\left(\left({\frac{N}{\sigma_t(\mathbf{q},k)^2}}\vert \langle\mathbf{q},u_k\rangle\vert^2\right)_{k\in I}\right)\right]\xrightarrow[N\longrightarrow\infty]{}\mathds{E}\left[P\left(\left(\mathcal{N}_k^2\right)_{k=1}^m\right)\right]
\end{equation}
with $(\mathcal{N}_k)_k$ a family of independent normal random variables.
\end{proposition}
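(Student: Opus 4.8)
The plan is to prove \eqref{eq:phi} in two steps: first for a Gaussian-divisible ensemble, using the short-time relaxation of Proposition \ref{prop:moment}, and then for $W_t$ itself, by removing the Gaussian component through the continuity estimate of Corollary \ref{coro:conti}. Fix a small $\mathfrak a>0$ and a time $\tau=\tau(N)\in\mathcal T_{\mathfrak a}'$, for instance $\tau=N^{-\mathfrak a}\sqrt{t/N}$, so that $(N\tau)^{-1/2}\to0$ and $\tau/t\leqslant N^{-\mathfrak a}\to0$. The variance-preserving dynamics of Subsection \ref{sec:cont} satisfies, in distribution, $\tilde H(\tau)=D+\sqrt{t'}\,W+\sqrt{\tau'}\,\mathrm{GOE}$ with $t'=te^{-\tau/t}\asymp t$ and $\tau'=t(1-e^{-\tau/t})\asymp\tau\ll t$; hence Proposition \ref{prop:moment}, whose proof uses only the eigenvector moment flow for a deformed Wigner matrix carrying a mesoscopic Gaussian component together with the local laws of Section \ref{sec:local}, applies to $\tilde H(\tau)$ with $t$ replaced by $t'$. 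By Remark \ref{rk:eta} and the bound $|\partial_t m_t|\leqslant C\log N/t$ from the proof of Lemma \ref{lem:localtau}, the resulting variance profile agrees with $\sigma_t^2(\mathbf q,k)$ up to a negligible relative error, so I shall simply write $\sigma_t^2(\mathbf q,k)$ throughout.

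\textbf{Step 1: the Gaussian-divisible ensemble.} Writing $z_k=\sqrt N\langle\mathbf q,\tilde u_k\rangle$ and recalling $f_\tau(\bm\eta)=\mathds E[\prod_k z_k^{2\eta_k}/a(2\eta_k)\,|\,\bm\lambda]$ with $a(2n)=\mathds E[\mathcal N^{2n}]$, Proposition \ref{prop:moment}, combined with the sharp a priori bound $\varphi_{\bm\lambda,t}(\bm\eta)\prec\prod_k\sigma_t(\mathbf q,k)^{2\eta_k}$ (obtained exactly as in the proof of Corollary \ref{coro:normephi}, by keeping $\mathrm{Im}\sum_i q_i^2g_i(t,z_k)\asymp\sigma_t^2(\mathbf q,k)$ instead of bounding it by $C/t$), would give, for every $\bm\eta\subset\mathcal A_r^\kappa$ with $\mathcal N(\bm\eta)=n$,
\begin{equation}\label{eq:pfphi-cond}
\frac{f_\tau(\bm\eta)}{\prod_k\sigma_t(\mathbf q,k)^{2\eta_k}}=1+\mathcal O_\prec\!\left(\tfrac{1}{\sqrt{N\tau}}+\big(\tfrac{\tau}{t}\big)^{1/3}\right).
\end{equation}
Taking the expectation over $\bm\lambda$ in \eqref{eq:pfphi-cond}, and controlling the event (of probability $\leqslant N^{-D}$) on which the error is not dominated, by means of the deterministic bounds $|\langle\mathbf q,\tilde u_k\rangle|\leqslant1$ and $\sigma_t^2(\mathbf q,k)\geqslant N^{-C}t$, one arrives at
\begin{align}
\mathds E\!\left[\prod_{k\in I}\Big(\tfrac{N}{\sigma_t^2(\mathbf q,k)}|\langle\mathbf q,\tilde u_k\rangle|^2\Big)^{\eta_k}\right]
&=\Big(\prod_{k\in I}a(2\eta_k)\Big)\,\mathds E\!\left[\frac{f_\tau(\bm\eta)}{\prod_{k\in I}\sigma_t(\mathbf q,k)^{2\eta_k}}\right]\label{eq:pfphi-monom}\\
&\xrightarrow[N\to\infty]{}\prod_{k\in I}a(2\eta_k)=\mathds E\!\Big[\prod_{k\in I}\mathcal N_k^{2\eta_k}\Big].\nonumber
\end{align}
Expanding the polynomial $P$ into monomials (each corresponding to a configuration $\bm\eta$ supported on $I$) and summing \eqref{eq:pfphi-monom} proves \eqref{eq:phi} with $W_t$ replaced by $\tilde H(\tau)$, uniformly in $\mathbf q\in\mathbb S^{N-1}$.

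\textbf{Step 2: removing the Gaussian component.} Since $P$ is unbounded I would first truncate. By Theorem \ref{theo:isolocal} (as used in the proof of Corollary \ref{coro:normephi}) the quantities $\frac{N}{\sigma_t^2(\mathbf q,k)}|\langle\mathbf q,u_k\rangle|^2$, $k\in I$, are $\prec1$ for both $W_t$ and $\tilde H(\tau)$; hence, given $\varepsilon>0$, one may replace $P$ by a smooth function $\Theta$ of at most polynomial growth (with all derivatives of polynomial growth) coinciding with $P$ on $[0,N^\varepsilon]^m$, the resulting error being $O(N^{-D})$ for every $D$ by Cauchy--Schwarz and the moment bound $\prec1$. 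Applying Corollary \ref{coro:conti} to such a $\Theta$, taken independent of the eigenvalue arguments $N(\tilde\lambda_k-\gamma_{k,t})$, yields
\begin{equation}\label{eq:pfphi-cont}
\Big|\big(\mathds E^{W_t}-\mathds E^{\tilde H(\tau)}\big)\,P\Big(\big(\tfrac{N}{\sigma_t^2(\mathbf q,k)}|\langle\mathbf q,u_k\rangle|^2\big)_{k\in I}\Big)\Big|\leqslant N^{-\mathfrak p}+N^{-D}
\end{equation}
for some $\mathfrak p>0$, uniformly in $\mathbf q$. Combining \eqref{eq:pfphi-cont} with the version of \eqref{eq:phi} already proved for $\tilde H(\tau)$ and letting $N\to\infty$ gives \eqref{eq:phi}; Theorem \ref{theo:resultGaus} then follows by the moment-to-distribution argument of \cite[Section 5.3]{bourgade2016emf}, and Corollary \ref{QUE} by Markov's inequality.

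\textbf{Main obstacle.} The substantive analysis is already contained in Proposition \ref{prop:moment} and Corollary \ref{coro:conti}; the remaining work is essentially bookkeeping, the most delicate point being to keep the relaxation error in \eqref{eq:pfphi-cond} \emph{relative} to the main term $\prod_k\sigma_t(\mathbf q,k)^{2\eta_k}$ rather than to the cruder $t^{-n}$ of Corollary \ref{coro:normephi} (which is too large once $n$ grows), and, relatedly, to match the several regularizations of $\sigma_t^2$ (the spectral parameter $\eta$, and the time $t$ versus $t'$) at negligible relative cost and to truncate the unbounded test polynomial so that the smooth-function continuity estimate of Corollary \ref{coro:conti} may be invoked.
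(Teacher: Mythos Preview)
Your proposal is correct and follows the same two–step strategy as the paper's proof: first invoke Proposition~\ref{prop:moment} for the Gaussian-divisible matrix $\tilde H(\tau)$ (after writing its law as $D+\sqrt{t'}W+\sqrt{\tau'}\,\mathrm{GOE}$ via the very scaling the paper uses), then transfer to $W_t$ through Corollary~\ref{coro:conti}. You add two refinements the paper leaves implicit—the smooth truncation of the polynomial $P$ so that Corollary~\ref{coro:conti} is applicable, and the observation that one needs the error in Proposition~\ref{prop:moment} to be small \emph{relative} to $\prod_k\sigma_t(\mathbf q,k)^{2\eta_k}$ rather than just $O_\prec(t^{-n})$—and both are legitimate technical points; note however that upgrading the error in \eqref{eq:pfphi-cond} to a genuinely relative one would require revisiting the proof of Proposition~\ref{prop:moment} with the sharp a priori bound, since the flattening/short-range errors there are controlled by $\|\varphi_t\|_{\infty,n}\prec t^{-n}$, not by $\prod_k\sigma_t^{2\eta_k}$.
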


See now the proof of Theorem \ref{theo:resultGaus} and Corollary \ref{QUE} given by Proposition \ref{prop:moment}.
\begin{proof}[Proof of Theorem \ref{theo:resultGaus}.]
Proposition \ref{prop:phi} exactly gives us that the joint moments of the renormalized eigenvectors converge to those of independent normal random variables which is the result of Theorem \ref{theo:resultGaus}.
\end{proof}

\begin{proof}[Proof of Corollary \ref{QUE}]
By Proposition \ref{prop:moment} and Corollary \ref{coro:conti}, we have the following inequality, for some $\varepsilon>0$, 
\begin{equation}\label{eq312}
\mathds{E}\left[\vert u_k(\alpha)^2\vert \right]=\frac{1}{N}\sigma_t^2(\alpha,k)+\mathcal{O}\left(\frac{N^{-\varepsilon} }{Nt}\right).
\end{equation}
By Markov's inequality, we can write 
\begin{align}
\mathds{P}\left(\frac{Nt}{\vert A\vert}\right.\left.\middle\vert\sum_{\alpha\in A}\vert u_k(\alpha)\vert^2-\frac{1}{N}\sum_{\alpha\in A}\sigma_t^2(\alpha,k)\middle\vert>c\right)&\leqslant \frac{N^2t^2}{c^2\vert A\vert^2}\mathds{E}\left[\left\vert\sum_{\alpha\in A}\vert u_k(\alpha)\vert^2-\frac{1}{N}\sum_{\alpha\in A}\sigma_t^2(\alpha,k)\right\vert^2\right],\nonumber\\
&\label{eq314}\leqslant\frac{N^2t^2}{c^2\vert A\vert^2}\left(\mathfrak{A}-2\mathfrak{B}+\mathfrak{C}\right)
\end{align}
We now need to evaluate the three terms in the last inequality using \eqref{eq312}, first we have
\begin{align*}
\mathfrak{A}:=\mathds{E}\left[\left(\sum_{\alpha\in A}\vert u_k(\alpha)\vert^2\right)^2\right]=\frac{1}{N^2}\left(\sum_{\alpha\in A}\sigma_t^2(\alpha,k)\right)^2+\frac{2}{N^2}\sum_{\alpha \in A}\sigma_t^4(\alpha,k)+\mathcal{O}\left(\frac{N^{-\varepsilon}\vert A\vert^2}{N^2t^2}\right).
\end{align*}
Likewise,
\begin{align*}
\mathfrak{B}:=\frac{1}{N}\sum_{\beta\in A}\sigma_t^2(\beta,N)\mathds{E}\left[\sum_{\alpha\in A}\vert u_k(\alpha)\vert^2\right]=\frac{1}{N^{2}}\sum_{\alpha,\beta\in A}\sigma_t^2(\alpha,k)\sigma^2_t(\beta,k)+\mathcal{O}\left(\frac{N^{-\varepsilon }\vert A\vert^2}{N^2t^2}\right).
\end{align*}
Finally, $\mathfrak{C}$ is just a deterministic term,

$$\mathfrak{C}:=\left(\frac{1}{N}\sum_{\alpha\in A}\sigma_t^2(\alpha,k)\right)^2.$$
Putting all three terms together, we get
\begin{equation}
\mathfrak{A}-2\mathfrak{B}+\mathfrak{C}=\frac{2}{N^2}\sum_{\alpha\in A}{\sigma_t^4(\alpha,k)}+\mathcal{O}\left(\frac{N^{-\varepsilon} \vert A\vert^2}{N^2t^2}\right)\leqslant C\left(\frac{\vert A\vert}{N^2t^2}+\frac{N^{-\varepsilon} \vert A\vert^2}{N^2t^2}\right)
\end{equation}
The claim then follows from injecting the last inequality in \eqref{eq314}. 
\end{proof}

We finish now with the proof of Proposition \ref{prop:phi}.
\begin{proof}[Proof of Proposition \ref{prop:phi}]
By Corollary \ref{coro:conti}, we know that for some $\tau\in\mathcal{T}_\mathfrak{a}'$ there exists $\varepsilon>0$ such that 
\begin{equation}\label{eq:boundcontin}
\left\vert\mathds{E}\left[P\left(\left(\frac{N}{\sigma_t(\mathbf{q},k)^2}\vert\langle\mathbf{q},u_k\rangle\vert^2\right)_{k\in I}\right)\right]-\mathds{E}\left[P\left(\left(\frac{N}{\sigma_t(\mathbf{q},k)^2}\vert\langle\mathbf{q},u_k^{\tilde{H}_\tau}\rangle\vert^2\right)_{k\in I}\right)\right]\right\vert\leqslant N^{-\varepsilon}
\end{equation}
and by Proposition \ref{prop:moment}, we know that, recalling the definition of $H_s$, for some $\tau'\ll t$ there exists a $\varepsilon'>0$ such that
\begin{equation}\label{eq:boundgaussdiv}
\left\vert\mathds{E}\left[P\left(\left(\frac{N}{\sigma_t(\mathbf{q},k)^2)}\vert\langle\mathbf{q},u_k^{H_{\tau'}}\rangle\vert^2\right)_{k\in I}\right)\right]-\mathds{E}\left[P\left(\left(\mathcal{N}_k^2\right)_{k=1
}^m\right)\right]\right\vert\leqslant N^{-\varepsilon'}.
\end{equation}

Now, we need to see that $\tilde{H}_\tau$ defined in \eqref{eq:deftildeh} has the same law as $H_{\tau'}$ for some $\tau'\ll t$. Note that we can write the law of the entries of $\tilde{H}_\tau$ as
\begin{equation}
\tilde{H}_{ij}(\tau)\overset{d}{=}D_{ij}+e^{-\frac{\tau}{2t}}\sqrt{t}W_{ij}+\sqrt{t\left(1-e^{-\frac{\tau}{t}}\right)}\frac{1}{\sqrt{N}}\mathcal{N}^{(ij)},
\end{equation}
where $\left(\mathcal{N}^{(ij)}\right)_{i\leqslant j}$ is a family of independent standard Gaussian random variables. Doing the scaling 
\begin{equation}
\begin{split}
t'&=te^{-\tau/t}=\mathcal{O}(t),\\
\tau'&=\sqrt{t\left(1-e^{-\frac{\tau}{t}}\right)}=\mathcal{O}(\tau)
\end{split}
\end{equation}
one can write
$$\tilde{H}_\tau=D+\sqrt{t'}W+\sqrt{\tau'}\mathrm{GOE}.$$
Finally, we can apply Proposition \ref{prop:moment} to $\tilde{H}$ so that \eqref{eq:boundgaussdiv} applies and combining it with \eqref{eq:boundcontin} we get the convergence of moments for the eigenvectors of $W_t$. 
\end{proof}

 Combining Theorem \ref{theo:resultint} and Proposition \ref{prop:moment}, we are now able to prove Theorem \ref{theo:resultQUE}.
\begin{proof}[Proof of Theorem \ref{theo:resultQUE}]
Let $\varepsilon$ and $D$ two positive constants and consider $s=\tau/t,$. There exists then a large $K$, which does not depend on $N$, such that by Proposition \ref{prop:reverseheat} there exists a matrix $\widetilde{W}$ such that the total variation distance between the distribution of $W$ and $\sqrt{1-s}\widetilde{W}+\sqrt{Ns}\mathrm{GOE}$ is smaller than $N^{-D}$.\\ Denote $u_1,\dots,\,u_N$ the $L^2-$normalized eigenvectors of $W_t=D+\sqrt{t}W$ and $\tilde{u}_1,\dots,\,\tilde{u}_N$ the normalized eigenvectors of $\widetilde{W}_t(s)=D+\sqrt{t(1-s)}\,\widetilde{W}+\sqrt{ts}\mathrm{GOE}$. Now, since we have in the overwhelming probability bound \eqref{eq:resultQUE} the $N^\varepsilon$ degree of liberty, we can do the scaling $t'=\sqrt{t(1-s)}$ as $s\ll 1$ and still get \eqref{eq:resultQUE} for the deformed Gaussian divisible ensemble $\widetilde{W}_t(s),$ thus one can write
\begin{align*}
\mathds{P}&\left(\left\vert\sum_{\alpha\in I}\left(u_k(\alpha)^2-\frac{1}{N}\sigma_t^2(\alpha,k)\right)\right\vert\geqslant N^\varepsilon\Xi(\tau)\right)
\\
&\leqslant \mathds{P}\left(\left\vert \sum_{\alpha\in I}\left(\tilde{u}_k(\alpha)^2-\frac{1}{N}\sigma_t^2(\alpha,k)\right)\right\vert\geqslant N^{\varepsilon/2}\Xi(\tau)\right)
+\mathds{P}\left(\left\vert \sum_{\alpha\in I}\left(u_k(\alpha)^2-\tilde{u}_k(\alpha)\right)\right\vert\geqslant N^{\varepsilon/2}\Xi(\tau)\right)\\
&\leqslant N^{-D}
\end{align*}
where for the last inequality we used the quantum unique ergodicity proved in Theorem \ref{theo:resultint} for the deformed Gaussian divisible ensemble of which $\tilde{\mathbf{u}}$ are the eigenvectors and Proposition $\ref{prop:reverseheat}$ in order. 

Now, in order to get the error $\Xi$ we now need to optimize the error
\[
\Xi(\tau_0)=\frac{\widehat{I}}{\sqrt{N\tau_0}}+\widehat{I}\frac{\tau_0}{t}=\frac{\widehat{I}}{(Nt)^{1/3}}=\Xi\quad\text{with}\quad \tau_0=\left(\frac{t^2}{N}\right)^{1/3}.
\]

We can do the same thing for the quantity $\sum_{\alpha\in I}u_k(\alpha)u_l(\alpha)$ and get the final result.
\end{proof}

\begin{bibdiv}
\begin{biblist}

\bib{allez2012eigenvector}{article}{
  title={Eigenvector dynamics: general theory and some applications},
  author={Allez, R.},
  author={Bouchaud, J.-P.},
  journal={Physical Review E},
  volume={86},
  number={4},
  pages={046202},
  year={2012},
  publisher={APS}
}

\bib{allez2014eigenvector}{article}{
  title={Eigenvector dynamics under free addition},
  author={Allez, R.},
  author={Bouchaud, J.-P.},
  journal={Random Matrices: Theory and Applications},
  volume={3},
  number={03},
  year={2014},
  publisher={World Scientific}
}

\bib{allez2014eigenvectors}{article}{
  title={The eigenvectors of  {G}aussian matrices with an external source},
  author={Allez, R.},
  author={Bun, J.},
  author={Bouchaud, J.-P.},
  journal={arXiv preprint},
  year={2014}
}

\bib{anantharaman2015quantum}{article}{
    AUTHOR = {Anantharaman, N.},
    author={Le Masson, E.},
     TITLE = {Quantum ergodicity on large regular graphs},
   JOURNAL = {Duke Math. J.},
    VOLUME = {164},
      YEAR = {2015},
    NUMBER = {4},
     PAGES = {723--765},
      ISSN = {0012-7094},
}

\bib{anderson2010introduction}{book}{
   author={Anderson, G. W.},
   author={Guionnet, A.},
   author={Zeitouni, O.},
   title={An introduction to random matrices},
   series={Cambridge Studies in Advanced Mathematics},
   volume={118},
   publisher={Cambridge University Press, Cambridge},
   date={2010},
   pages={xiv+492},
   isbn={978-0-521-19452-5},
}

\bib{Anderson}{article}{
   author={Anderson, P.},
   title={Absences of diffusion in certain random lattices},
   journal={Phys. Rev.},
   year={1958},
   pages={1492-1505}
}

\bib{bauerschmidt2016local}{article}{
  title={Local Kesten--Mckay law for random regular graphs},
  author={Bauerschmidt, R.},
  author={Huang, J.},
  author={Yau, H.-T.},
  journal={Comm. Math. Phys.},
  date={2019},
  pages={to appear}, 
}

\bib{benaych2016lectures}{article}{
  title={Lectures on the local semicircle law for Wigner matrices},
  author={Benaych-Georges, F.},
  author={Knowles, A.},
  journal={arXiv preprint},
  year={2016},
}

\bib{biane1997free}{article}{
    AUTHOR = {Biane, P.},
     TITLE = {On the free convolution with a semi-circular distribution},
   JOURNAL = {Indiana Univ. Math. J.},
    VOLUME = {46},
      YEAR = {1997},
    NUMBER = {3},
     PAGES = {705--718},
      ISSN = {0022-2518},
}

\bib{bohigas1984characterization}{article}{
   author={Bohigas, O.},
   author={Giannoni, M.-J.},
   author={Schmit, C.},
   title={Characterization of chaotic quantum spectra and universality of
   level fluctuation laws},
   journal={Phys. Rev. Lett.},
   volume={52},
   date={1984},
   number={1},
   pages={1--4},
   issn={0031-9007},
}

\bib{bourgade2018band}{article}{
  title={Random band matrices},
  author={Bourgade, P.},
  journal={Proceedings ICM-2018},
  year={2018}
}

\bib{bourgade2016fixed}{article}{
   author={Bourgade, P.},
   author={Erd\H os, L.},
   author={Yau, H.-T.},
   author={Yin, J.},
   title={Fixed energy universality for generalized Wigner matrices},
   journal={Comm. Pure Appl. Math.},
   volume={69},
   date={2016},
   number={10},
   pages={1815--1881},
   issn={0010-3640},
}

\bib{bourgade2017universality}{article}{
	author={Bourgade, P.},
	author={Erd\H os, L.},
    author={Yau, H.-T.},
    author={Yin, J.},
    title={Universality for a class of random band matrices},
    journal={Adv. Theo. Math. Phys.},
    volume={21},
    date={2017},
    pages={739-800}
}

\bib{bourgade2017huang}{article}{
   author={Bourgade, P.},
   author={Huang, J.},
   author={Yau, H.-T.},
   title={Eigenvector statistics of sparse random matrices},
   journal={Electron. J. Probab.},
   volume={22},
   date={2017},
   pages={Paper No. 64, 38},
   issn={1083-6489},
}

\bib{bourgade2017eigenvector}{article}{
   author={Bourgade, P.},
   author={Yau, H.-T.},
   title={The eigenvector moment flow and local quantum unique ergodicity},
   journal={Comm. Math. Phys.},
   volume={350},
   date={2017},
   number={1},
   pages={231--278},
   issn={0010-3616},
}

\bib{bourgade2018random}{article}{
  title={Random band matrices in the delocalized phase, I: Quantum unique ergodicity and universality},
  author={Bourgade, P.},
  author={Yau, H.-T.}, 
  author={Yin, J.},
  journal={arXiv preprint},
  year={2018}
}

\bib{bru1989diffusions}{article}{
   author={Bru, M.-F.},
   title={Diffusions of perturbed principal component analysis},
   journal={J. Multivariate Anal.},
   volume={29},
   date={1989},
   number={1},
   pages={127--136},
   issn={0047-259X},
}

\bib{bun2016rotational}{article}{
  author    = {Bun, J.},
  author	= {Allez, R.},
  author	= {Bouchaud, J.-P.},
  author	= {Potters, M.},
  title     = {Rotational Invariant Estimator for General Noisy Matrices},
  journal   = {{IEEE} Trans. Information Theory},
  volume    = {62},
  number    = {12},
  pages     = {7475--7490},
  year      = {2016},
}

\bib{bun2018overlaps}{article}{
  title={Overlaps between eigenvectors of correlated random matrices},
  author={Bun, J.},
  author={Bouchaud, J.-P.},
  author={Potters, M.},
  journal={Physical Review E},
  volume={98},
  number={5},
  year={2018},
  publisher={APS}
}

\bib{colin1985ergodicite}{article}{
    AUTHOR = {Colin de Verdi\`ere, Y.},
     TITLE = {Ergodicit\'e et fonctions propres du laplacien},
   JOURNAL = {Comm. Math. Phys.},
    VOLUME = {102},
      YEAR = {1985},
    NUMBER = {3},
     PAGES = {497--502},
      ISSN = {0010-3616},
}

\bib{erdos2010peche}{article}{
   author={Erd\H os, L.},
   author={P\'ech\'e, S.},
   author={Ram\'\i rez, J. A.},
   author={Schlein, B.},
   author={Yau, H.-T.},
   title={Bulk universality for Wigner matrices},
   journal={Comm. Pure Appl. Math.},
   volume={63},
   date={2010},
   number={7},
   pages={895--925},
   issn={0010-3640},
}

\bib{erdos2009local}{article}{
   author={Erd\H os, L.},
   author={Schlein, B.},
   author={Yau, H.-T.},
   title={Local semicircle law and complete delocalization for Wigner random
   matrices},
   journal={Comm. Math. Phys.},
   volume={287},
   date={2009},
   number={2},
   pages={641--655},
   issn={0010-3616},
}

\bib{erdos2011universality}{article}{
   author={Erd\H os, L.},
   author={Schlein, B.},
   author={Yau, H.-T.},
   title={Universality of random matrices and local relaxation flow},
   journal={Invent. Math.},
   volume={185},
   date={2011},
   number={1},
   pages={75--119},
   issn={0020-9910},
}

\bib{erdos2017universality}{article}{
   author={Erd\H os, L.},
   author={Schnelli, K.},
   title={Universality for random matrix flows with time-dependent density},
   journal={Ann. Inst. Henri Poincar\'e Probab. Stat.},
   volume={53},
   date={2017},
   number={4},
   pages={1606--1656},
   issn={0246-0203},
}

\bib{erdos2015gap}{article}{
   author={Erd\H os, L.},
   author={Yau, H.-T.},
   title={Gap universality of generalized Wigner and $\beta$-ensembles},
   journal={J. Eur. Math. Soc. (JEMS)},
   volume={17},
   date={2015},
   number={8},
   pages={1927--2036},
   issn={1435-9855},
}

\bib{erdos2017dynamical}{book}{
   author={Erd\H os, L.},
   author={Yau, H.-T.},
   title={A dynamical approach to random matrix theory},
   series={Courant Lecture Notes in Mathematics},
   volume={28},
   publisher={Courant Institute of Mathematical Sciences, New York; American
   Mathematical Society, Providence, RI},
   date={2017},
}

\bib{erdos2012bulk}{article}{
   author={Erd\H os, L.},
   author={Yau, H.-T.},
   author={Yin, J.},
   title={Bulk universality for generalized Wigner matrices},
   journal={Probab. Theory Related Fields},
   volume={154},
   date={2012},
   number={1-2},
   pages={341--407},
   issn={0178-8051},
}

\bib{erdos2012rigidity}{article}{
   author={Erd\H os, L.},
   author={Yau, H.-T.},
   author={Yin, J.},
   title={Rigidity of eigenvalues of generalized Wigner matrices},
   journal={Adv. Math.},
   volume={229},
   date={2012},
   number={3},
   pages={1435--1515},
   issn={0001-8708},
}

\bib{facoetti2016non}{article}{
  title={From non-ergodic eigenvectors to local resolvent statistics and back: A random matrix perspective},
  author={Facoetti, D.},
  author={Vivo, P.},
  author={Biroli, G.},
  journal={EPL (Europhysics Letters)},
  volume={115},
  number={4},
  pages={47003},
  year={2016},
}

\bib{holowinsky2012sieving}{article}{
   author={Holowinsky, R.},
   title={Sieving for mass equidistribution},
   journal={Ann. of Math. (2)},
   volume={172},
   date={2010},
   number={2},
   pages={1499--1516},
}
\bib{holowinsky2010mass}{article}{
   author={Holowinsky, R.},
   author={Soundararajan, K.},
   title={Mass equidistribution for Hecke eigenforms},
   journal={Ann. of Math. (2)},
   volume={172},
   date={2010},
   number={2},
   pages={1517--1528},
}

\bib{huang2015bulk}{article}{
   author={Huang, J.},
   author={Landon, B.},
   author={Yau, H.-T.},
   title={Bulk universality of sparse random matrices},
   journal={J. Math. Phys.},
   volume={56},
   date={2015},
   number={12},
   pages={123301, 19},
   issn={0022-2488},
}

\bib{knowles2013eigenvector}{article}{
   author={Knowles, A.},
   author={Yin, J.},
   title={Eigenvector distribution of Wigner matrices},
   journal={Probab. Theory Related Fields},
   volume={155},
   date={2013},
   number={3-4},
   pages={543--582},
   issn={0178-8051},
}

\bib{kravtsov2017non}{article}{
author = {Kravtsov, V. E.},
author={Altshuler, B. L.},
author={Ioffe, L. B.},
year = {2017},
pages = {},
title = {Non-ergodic delocalized phase in {A}nderson model on {B}ethe lattice and regular graph},
volume = {389},
journal = {Ann. Phys.}
}

\bib{landon2017convergence}{article}{
   author={Landon, B.},
   author={Yau, H.-T.},
   title={Convergence of local statistics of Dyson Brownian motion},
   journal={Comm. Math. Phys.},
   volume={355},
   date={2017},
   number={3},
   pages={949--1000},
   issn={0010-3616},
}

\bib{lee2013local}{article}{
   author={Lee, J. O.},
   author={Schnelli, K.},
   title={Local deformed semicircle law and complete delocalization for
   Wigner matrices with random potential},
   journal={J. Math. Phys.},
   volume={54},
   date={2013},
   number={10},
   pages={103504, 62},
   issn={0022-2488},
}

\bib{lee2016bulk}{article}{
   author={Lee, J. O.},
   author={Schnelli, K.},
   author={Stetler, B.},
   author={Yau, H.-T.},
   title={Bulk universality for deformed Wigner matrices},
   journal={Ann. Probab.},
   volume={44},
   date={2016},
   number={3},
   pages={2349--2425},
   issn={0091-1798},
}

\bib{lindenstrauss2006invariant}{article}{
   author={Lindenstrauss, E.},
   title={Invariant measures and arithmetic quantum unique ergodicity},
   journal={Ann. of Math. (2)},
   volume={163},
   date={2006},
   number={1},
   pages={165--219},
   issn={0003-486X},
}

\bib{mehta2004random}{book}{
   author={Mehta, M. L.},
   title={Random matrices},
   series={Pure and Applied Mathematics (Amsterdam)},
   volume={142},
   edition={3},
   publisher={Elsevier/Academic Press, Amsterdam},
   date={2004},
   pages={xviii+688},
}

\bib{norris1986brownian}{article}{
   author={Norris, J. R.},
   author={Rogers, L. C. G.},
   author={Williams, D.},
   title={Brownian motions of ellipsoids},
   journal={Trans. Amer. Math. Soc.},
   volume={294},
   date={1986},
   number={2},
   pages={757--765},
   issn={0002-9947},
}

\bib{ossipov2015eig}{article}{
  author={Truong, K.},
  author={Ossipov, A.},
  title={Eigenvectors under a generic perturbation: Non-perturbative results from the random matrix approach},
  journal={EPL (Europhysics Letters)},
  volume={116},
  number={3},
  pages={37002},
  year={2016},
}

\bib{peled2019wegner}{article}{
   author={Peled, R.},
   author={Schenker, J.},
   author={Shamis, M.},
   author={Sodin, S.},
   title={On the Wegner Orbital Model},
   journal={Int. Math. Res. Not. IMRN},
   date={2019},
   number={4},
   pages={1030--1058},
   issn={1073-7928},
}

\bib{rosenzweig1960repulsion}{article}{
  title = {"Repulsion of Energy Levels" in Complex Atomic Spectra},
  author = {Rosenzweig, N.},
  author = {Porter, C. E.},
  journal = {Phys. Rev.},
  volume = {120},
  number = {5},
  pages = {1698--1714},
  year = {1960},
  publisher = {American Physical Society},
}
\bib{rudnick1994behaviour}{article}{
   author={Rudnick, Z.},
   author={Sarnak, P.},
   title={The behaviour of eigenstates of arithmetic hyperbolic manifolds},
   journal={Comm. Math. Phys.},
   volume={161},
   date={1994},
   number={1},
   pages={195--213},
   issn={0010-3616},
}

\bib{rudelson2015delocalization}{article}{
   author={Rudelson, M.},
   author={Vershynin, R.},
   title={Delocalization of eigenvectors of random matrices with independent
   entries},
   journal={Duke Math. J.},
   volume={164},
   date={2015},
   number={13},
   pages={2507--2538},
}

\bib{schenker2009eigenvector}{article}{
   author={Schenker, J.},
   title={Eigenvector localization for random band matrices with power law
   band width},
   journal={Comm. Math. Phys.},
   volume={290},
   date={2009},
   number={3},
   pages={1065--1097},
   issn={0010-3616},
}

\bib{shnirel1974ergodic}{article}{
    AUTHOR = {{\v{S}}nirel'man, A. I.},
     TITLE = {Ergodic properties of eigenfunctions},
   JOURNAL = {Uspehi Mat. Nauk},
    VOLUME = {29},
      YEAR = {1974},
    NUMBER = {6(180)},
     PAGES = {181--182},
      ISSN = {0042-1316},
}

\bib{sodin2010spectral}{article}{
   author={Sodin, S.},
   title={The spectral edge of some random band matrices},
   journal={Ann. of Math. (2)},
   volume={172},
   date={2010},
   number={3},
   pages={2223--2251},
   issn={0003-486X},
}

\bib{Tao2011random}{article}{
   author={Tao, T.},
   author={Vu, V.},
   title={Random matrices: universality of local eigenvalue statistics},
   journal={Acta Math.},
   volume={206},
   date={2011},
   number={1},
   pages={127--204},
   issn={0001-5962},
}

\bib{Tao2012random}{article}{
   author={Tao, T.},
   author={Vu, V.},
   title={Random matrices: universal properties of eigenvectors},
   journal={Random Matrices Theory Appl.},
   volume={1},
   date={2012},
   number={1},
   pages={1150001, 27},
   issn={2010-3263},
}

\bib{von2017local}{article}{,
	author = {von Soosten, P.},
	author = {Warzel, S.},
	journal = {Electron. J. Probab.},
	pages = {24 pp.},
	title = {The phase transition in the ultrametric ensemble and local stability of Dyson Brownian motion},
	volume = {23},
	year = {2018},
}
\bib{von2017non}{article}{
author={von Soosten, P.},
author={Warzel, S.},
title={Non-ergodic delocalization in the Rosenzweig--Porter model},
journal={Letters in Mathematical Physics},
year={2018},
issn={1573-0530},
}

\bib{vu2015random}{article}{
   author={Vu, V.},
   author={Wang, K.},
   title={Random weighted projections, random quadratic forms and random
   eigenvectors},
   journal={Random Structures Algorithms},
   volume={47},
   date={2015},
   number={4},
   pages={792--821},
   issn={1042-9832},
}

\bib{zelditch1987uniform}{article}{
    AUTHOR = {Zelditch, S.},
     TITLE = {Uniform distribution of eigenfunctions on compact hyperbolic
              surfaces},
   JOURNAL = {Duke Math. J.},
    VOLUME = {55},
      YEAR = {1987},
    NUMBER = {4},
     PAGES = {919--941},
      ISSN = {0012-7094},
}

\end{biblist}
\end{bibdiv}
\end{document}